\documentclass[11pt,letterpaper,reqno]{amsart}
\usepackage[colorlinks,final,backref=page,hyperindex]{hyperref}
\usepackage{xurl}
\usepackage{latexsym, amsfonts, amsmath, amssymb, amsthm, mathrsfs}
\usepackage{color}
\usepackage{xcolor}
\usepackage{hyperref}
\hypersetup{
    colorlinks=true,
    linkcolor=blue,   
    citecolor=magenta,    
    urlcolor=violet,    
    pdfborder={0 0 0} 
}

\usepackage{amssymb}
\usepackage{amsmath}
\usepackage{amsthm}
\usepackage{bbm}
\usepackage{enumerate}
\usepackage{doi}

\usepackage{bookmark}
\usepackage{hyperref}
\hypersetup{pdfstartview={FitH}}

\newtheorem{theorem}{Theorem}[section]
\newtheorem{corollary}[theorem]{Corollary}

\newtheorem{lemma}[theorem]{Lemma}
\newtheorem{proposition}[theorem]{Proposition}
\theoremstyle{definition}
\newtheorem{definition}[theorem]{Definition}
\newtheorem{remark}[theorem]{Remark}
\newtheorem{example}[theorem]{Example}

\numberwithin{equation}{section}
\allowdisplaybreaks
\newcommand{\Ztwo}{\mathbb Z_2}
\newcommand{\abs}[1]{\lvert #1\rvert}
\newcommand{\C}{\mathbb C}
\newcommand{\pa}{\partial}

\begin{document}

\title{On transposed Poisson conformal superalgebras}

\author{Hao Fang}
\address{Hao Fang, School of Mathematics, Harbin Institute of Technology, Harbin 150001, People's Republic of China}
\email{23s012028@stu.hit.edu.cn}

\author{Lamei Yuan}
\address{Lamei Yuan (corresponding author), School of Mathematics, Harbin Institute of Technology, Harbin 150001, People's Republic of China}
\email{lmyuan@hit.edu.cn}


\subjclass[2020]{17B60, 17B61, 17B63, 17B69, 17B70}

\keywords{transposed Poisson conformal superalgebra, Hom-Lie conformal superalgebra, Poisson conformal superalgebra, Lie conformal superalgebra}

\begin{abstract}
We introduce and study transposed Poisson conformal superalgebras, 
the $\mathbb Z_2$-graded conformal analogues of transposed Poisson algebras,
as well as their noncommutative variants. We derive a family of identities
forced by the transposed conformal super-Leibniz rule and prove that the
tensor product over $\mathbb C[\partial]$ of two such superalgebras again
carries a natural transposed Poisson conformal superalgebra structure. 
Moreover, we display a close relationship between transposed Poisson conformal superalgebras 
and Hom-Lie conformal superalgebras, and give the compatibility conditions between 
a Poisson conformal superalgebra and a transposed Poisson conformal superalgebra.
In addition, several constructions are obtained from
modified Lie conformal brackets and from Novikov-Poisson, pre-Lie
commutative, differential Novikov-Poisson, and pre-Lie Poisson conformal superalgebras.
Finally, using the known classification of Lie conformal superalgebras of
rank (1+1), we determine all compatible transposed Poisson conformal
superalgebra structures on such superalgebras.

\end{abstract}

\maketitle



\section{Introduction}

The notion of a Lie conformal superalgebra, introduced by Kac \cite{Kac98}, encodes the singular part of the operator product expansion of chiral fields in two-dimensional quantum field theory.
Closely connected to vertex algebras and formal distribution Lie superalgebras \cite{Chen25,Li96}, Lie conformal superalgebras also provide an effective framework for studying algebraic structures governed by locality \cite{Kac97Locality,Su13}.
The structure theory and representation theory of finite Lie conformal superalgebras have been systematically developed in \cite{BKLjmp10,BKLRimp06,FKja02,FKRb04}, while the cohomology theory of conformal (super)algebras was investigated in \cite{BKV99,DKjjm13}.
Significantly, the notion of a Poisson conformal superalgebra was introduced by Kolesnikov et al.~\cite{KKPjne21}, which consists of a Lie conformal superalgebra and a commutative associative conformal superalgebra satisfying certain compatibility conditions.
This compatibility provides a natural point of departure for studying variants in which the relation between the associative conformal product and the Lie conformal bracket is modified.

The present paper is concerned with the conformal super version of transposed Poisson algebras.
Recall that a Poisson algebra consists of a commutative associative product and a Lie bracket related by the usual Leibniz rule. 
Transposed Poisson algebras, introduced by Bai et al.~\cite{Bai23}, 
arise by reversing the roles of the two bilinear operations in the Leibniz rule.
They are closely related to Novikov-Poisson algebras, pre-Lie Poisson algebras, and
$\frac{1}{2}$-derivations of Lie algebras \cite{Bai23,FKLrac21}. 
The even conformal analogue of transposed Poisson algebras was considered in \cite{YF2026}. 
It is therefore natural to ask for a $\mathbb Z_2$-graded conformal theory in which the transposed compatibility is governed
simultaneously by conformal sesquilinearity and parity.

In this paper, we introduce the notion of transposed Poisson conformal superalgebras.
Such an object is a $\mathbb Z_2$-graded $\mathbb C[\partial]$-module carrying a
commutative associative conformal superalgebra structure and a Lie
conformal superalgebra structure, subject to the transposed conformal
super-Leibniz rule: for homogeneous elements $a,b,c$,
\[
2a\circ_\lambda [b_\mu c]
=
[(a\circ_\lambda b)_{\lambda+\mu}c]
+(-1)^{|a||b|}[b_\mu(a\circ_\lambda c)].
\]
When the odd part is zero, this recovers the transposed Poisson conformal algebras of \cite{YF2026}. 
If one drops the commutativity requirement on the associative conformal product, the resulting structure will be called a noncommutative
transposed Poisson conformal superalgebra.

Our first goal is to develop the basic structure theory of these
superalgebras. We derive several identities forced by the transposed
conformal super-Leibniz rule, including cyclic identities and mixed
identities involving conformal products and brackets.
These identities are useful not only as computational tools but also as structural constraints. 
As an application, we prove that the tensor product over $\mathbb C[\partial]$
of two transposed Poisson conformal superalgebras again carries a natural
transposed Poisson conformal superalgebra structure. 
We also show that, for every even element $h$, the operator $h\circ_{(0)}$
 gives rise to a Hom-Lie conformal
superalgebra, and we give necessary and sufficient conditions for a fixed
commutative associative conformal superalgebra and a fixed Lie conformal
superalgebra to form both a Poisson conformal superalgebra and a
transposed Poisson conformal superalgebra.

The second part of the paper is devoted to constructions. Starting from a given
transposed Poisson conformal superalgebra and an even element, we construct
a new transposed Poisson conformal superalgebra structure by modifying the Lie conformal bracket through the 0-th
product with that element. 
We then show that conformal super analogues of Novikov-Poisson, pre-Lie
commutative, differential Novikov-Poisson, and pre-Lie Poisson structures
give rise to transposed Poisson conformal superalgebra structures, extending the
corresponding constructions in the purely even conformal setting
\cite{YF2026}. Furthermore, we also establish a tensor product construction for
pre-Lie Poisson conformal superalgebras.

Finally, we classify compatible transposed Poisson conformal
superalgebra structures on Lie conformal superalgebras of rank (1+1).
Using the classification of such Lie conformal superalgebras in
\cite{ZCYjmh17}, we reduce the problem to the five types
$R_1,\ldots,R_5$. For each type, we determine all even conformal products
compatible with the given Lie conformal superalgebra structure.
The classification shows that the transposed conformal super-Leibniz rule
imposes strong restrictions on compatible products: except for the
abelian degenerations and a few explicitly determined nontrivial cases,
the compatible product is necessarily trivial.

The paper is organized as follows.  In Section~\ref{sec2}, we recall the
preliminaries on conformal superalgebras, Lie conformal superalgebras,
Poisson conformal superalgebras, and related structures.
In Section~\ref{sec3}, we introduce transposed Poisson conformal superalgebras and establish their fundamental
identities, tensor product construction, relation with Hom-Lie conformal superalgebras, and
compatibility with Poisson conformal superalgebras.
Section~\ref{sec4} is devoted to constructions
from Novikov-Poisson, pre-Lie commutative, differential
Novikov-Poisson, and pre-Lie Poisson conformal superalgebras.
In Section~\ref{sec5}, we present a complete classification of compatible transposed Poisson conformal 
superalgebra structures on Lie conformal superalgebras of rank (1+1).

Throughout this paper, we denote by $\mathbb{C}$ the set of complex numbers, $\mathbb{Z}_{+}$ the set of nonnegative integers. All vector spaces, linear maps, and tensor products are considered over the complex field $\mathbb{C}$.
For any vector space $A$, the space of polynomials of $\lambda$ with coefficients in $A$ is denoted by $A[\lambda]$.

\section{Preliminaries}\label{sec2}
In this section, we recall some basic notions and results concerning Lie conformal superalgebras and Poisson conformal superalgebras.

Let $V$ be a superspace that is a $\mathbb{Z}_2$-graded linear space with a direct sum $V=V_{\bar{0}}\oplus V_{\bar{1}}$.
The elements of $V_j$, $j\in\mathbb{Z}_2=\{\bar{0},\bar{1}\}$, are said to be homogeneous and of parity $j$. 
The parity of a homogeneous element $x$ is denoted by $\abs x$. In particular, $\abs 0=\bar{0}$.
Throughout what follows, if $\abs x$ occurs in an expression, then it is assumed that $x$ is homogeneous and that the expression extends to the other elements by linearity.

A \emph{superalgebra} is $\Ztwo$-graded algebra $A=A_{\bar{0}}\oplus A_{\bar{1}}$, that is, if $a\in A_\alpha$, $b\in A_\beta$, $\alpha,\beta\in\Ztwo$, then $ab\in A_{\alpha+\beta}$.
A \emph{$\Ztwo$-graded $\C[\partial]$-module} is a $\C[\partial]$-module $A=A_{\bar {0}}\oplus A_{\bar{1} }$ such that $\partial A_i\subseteq A_i$ for $i\in\Ztwo$.
The tensor product of graded spaces is graded by $\abs{a\otimes b}=\abs a+\abs b$.
\begin{definition}[\cite{DKjjm13}]
A {\bf conformal superalgebra} $(A,\circ_\lambda)$ is a vector superspace $A$ endowed with a structure of a $\mathbb{C}[\partial]$-module compatible with parity,
and with a $\lambda$-product, i.e., a linear map $A\otimes A\to \mathbb{C}[\lambda]\otimes A$, $a\otimes b\mapsto a \circ_{\lambda}b$, satisfying the sesquilinearity relations:
   \begin{align}
        (\partial a)\circ_{\lambda}b=-\lambda a\circ_{\lambda}b,\quad a\circ_{\lambda}(\partial b)=(\partial+\lambda)a\circ_{\lambda}b.
   \end{align}
A conformal superalgebra $(A,\circ_\lambda)$ is called {\bf commutative}, if it satisfies
   \begin{align}
        a\circ_\lambda b=(-1)^{\abs a\abs b}b\circ_{-\partial-\lambda}a,\quad \forall ~a,b\in {A}.
   \end{align}
\end{definition}

\begin{definition}[\cite{FKRb04}]
An {\bf associative conformal superalgebra} $(A,\circ_\lambda)$ is a left $\Ztwo$-graded $\mathbb{C}[\partial]$-module $A$ endowed with a $\mathbb{C}$-bilinear map $A\otimes A\to A[\lambda]$, denoted by $a\otimes b\mapsto a \circ_{\lambda}b$ such that $(A,\circ_\lambda)$ is a conformal superalgebra and satisfies the associativity relations:
   \begin{align}\label{ly}
      a\circ_{\lambda}\left(b\circ_{\mu} c\right)=\left(a\circ_{\lambda} b\right)\circ_{\lambda+\mu} c,\quad \forall ~a,b,c\in {A}.
   \end{align}

\end{definition}

\begin{remark}[{\cite{DAK98}}]\label{rem1}
   The following properties always hold in an associative conformal superalgebra $(A,\circ_\lambda)$:
   \begin{align}
         a\circ_{\lambda}(b\circ_{-\partial-\mu}c) & =(a\circ_{\lambda}b)\circ_{-\partial-\mu}c, \\
         a\circ_{-\partial-\lambda}(b\circ_{\mu}c) & =(a\circ_{-\partial-\mu}b)\circ_{-\partial+\mu-\lambda}c, \\
         a\circ_{-\partial-\lambda}(b\circ_{-\partial-\mu}c) & =(a_{-\partial+\mu-\lambda}b)\circ_{-\partial-\mu}c.
   \end{align}
\end{remark}
\begin{remark}[{\cite{KKPjne21}}] \label{rem3}
   In every commutative associative conformal superalgebra $(A,\circ_\lambda)$, the following identity holds:
   \begin{align}
      a\circ_\lambda\left(b\circ_\mu c\right)=(-1)^{\abs a\abs b} b\circ_\mu\left(a\circ_\lambda c\right),\quad \forall ~a,b,c\in {A}.
   \end{align}
\end{remark}

\begin{definition}[{\cite{Kac98}}]
A {\bf Lie conformal superalgebra} $A=A_{\bar{0}}\oplus A_{\bar{1}}$ is a $\Ztwo$-graded $\mathbb{C}[\partial]$-module endowed with a $\mathbb{C}$-bilinear map $A\otimes A\to A[\lambda]$, denoted by $a\otimes b\mapsto [a_{\lambda}b]$, satisfying $[{A_i}_{\lambda}A_j]\subseteq A_{i+j}[\lambda]$, $i,j\in\Ztwo$,
and the following axioms ($a,b,c\in {A}$):
   \begin{align}
        &[(\partial a)_{\lambda}b]=-\lambda[a_{\lambda}b],~[a_{\lambda}(\partial b)]=(\partial+\lambda)[a_{\lambda}b], &&\text{(Conformal sesquilinearity)}\\
        &[a_{\lambda}b]=-(-1)^{\abs a\abs b}[b_{-\lambda-\partial}a], &&\text{(Skew-symmetry)}\\
        &[a_{\lambda}[b_{\mu}c]]=[[a_{\lambda}b]_{\lambda+\mu}c]+(-1)^{\abs a\abs b}[b_{\mu}[a_{\lambda}c]]. &&\text{(Jacobi identity)} \label{Jacobi}
   \end{align}
A Lie conformal superalgebra is called \emph{finite} if it is finitely generated as a $\mathbb{C}[\partial]$-module. The \emph{rank} of a conformal superalgebra is its rank as a $\mathbb{C}[\partial]$-module.
\end{definition}

\begin{example}[{\cite{CSKajm97}}]
The simplest examples of a conformal superalgebra is the \emph{current conformal superalgebra} associated to a finite-dimensional Lie superalgebra $\mathfrak g$:
$\operatorname{Cur}\mathfrak g=\mathbb C[\partial]\otimes\mathfrak g$, with the \(\lambda\)-bracket defined by
\begin{equation}
(f(\partial) \otimes a)_\lambda(g(\partial) \otimes b):=f(-\lambda) g(\lambda+\partial) \otimes[a, b],  ~~~ f(\partial), g(\partial) \in \mathbb{C}[\partial], ~~~a, b \in \mathfrak{g},
\end{equation}
and the \emph{Virasoro conformal algebra} $\mathrm{Vir}=\mathbb C[\partial]L$ with the \(\lambda\)-bracket: $[L_\lambda L]=(\partial+2\lambda)L$.
\end{example}

\begin{example}[{\cite{CSKajm97}}] (Neveu--Schwarz Lie conformal superalgebra)
Let $\mathrm{NS}=\mathbb{C}[\partial]L\oplus \mathbb{C}[\partial]G$ be a free \(\mathbb{Z}_2\)-graded \(\mathbb{C}[\partial]\)-module. Define
   \begin{equation*}
      [L_\lambda L]=(\partial+2\lambda)L,\quad
      [L_\lambda G]=(\partial+\frac{3}{2}\lambda)G,\quad
      [G_\lambda L]=(\frac{1}{2}\partial+\frac{3}{2}\lambda)G,\quad
      [G_\lambda G]=L,
   \end{equation*}
where $\mathrm{NS}_{\bar 0}=\mathbb{C}[\partial]L$ and $\mathrm{NS}_{\bar 1}=\mathbb{C}[\partial]G$.
Then \(\mathrm{NS}\) is a Lie conformal superalgebra of rank (1+1). We call it the \emph{Neveu--Schwarz Lie conformal superalgebra}.
\end{example}

Given two $\Ztwo$-graded left $\mathbb{C}[\partial]$-modules $M$ and $N$, a \emph{conformal linear map} from $M$ to $N$ is a $\mathbb{C}$-linear map $\varphi:M\rightarrow \mathbb{C}[\lambda]\otimes N$, denoted by $\varphi_\lambda:M\rightarrow N$, such that $$\varphi_\lambda(\partial a)=(\partial+\lambda)\varphi_\lambda(a),\quad\forall ~ a\in M.$$
\begin{definition}[{\cite{FKRb04}}]
A \emph{conformal derivation} of a Lie conformal superalgebra $A$ is a conformal endomorphism $\phi_\lambda$ of $A$ such that for any homogeneous $x,y\in A$,
\begin{equation}
\phi_\lambda [x_\mu y]=[(\phi_\lambda x)_{\lambda+\mu}y]+(-1)^{\abs x \abs \phi}[x_\mu(\phi_\lambda y)].
\end{equation}
\end{definition}

More generally, we define a \emph{conformal $\delta$-derivation} of a Lie conformal superalgebra $A$ as a conformal endomorphism $\phi_\lambda$ of $A$ such that 
\begin{equation}
\phi_\lambda [x_\mu y]=\delta\big([(\phi_\lambda x)_{\lambda+\mu}y]+(-1)^{\abs x \abs \phi}[x_\mu(\phi_\lambda y)]\big), \quad\forall ~ x,y\in A.
\end{equation}
Therefore, a conformal $1$-derivation of $(A,[\cdot_{\lambda}\cdot])$  is exactly an ordinary conformal derivation.

We will also work with ordinary derivations of conformal algebras. An \emph{ordinary derivation} of a Lie conformal superalgebra $A$ is a
$\mathbb{C}[\partial]$-linear endomorphism $d$ of $A$ such that for any homogeneous $x,y\in A$,
\begin{equation}
d([x_\mu y])=[d(x)_\mu y]+(-1)^{\abs x \abs d}[x_\mu d(y)].
\end{equation}

\begin{definition}[{\cite{YCHase17}}]
A {\bf Hom-Lie conformal superalgebra} $A=A_{\bar{0}}\oplus A_{\bar{1}}$ is a $\Ztwo$-graded $\mathbb{C}[\partial]$-module equipped with an even linear endomorphism $\alpha$ satisfying $\alpha\partial=\partial\alpha$, and a $\mathbb{C}$-bilinear map $A \otimes A \rightarrow A [\lambda]$, $a\otimes b\mapsto [a_{\lambda}b]$,
such that $[{A_i}_{\lambda}A_j]\subseteq A_{i+j}[\lambda]$, $i,j\in\Ztwo$, and the following axioms hold for all $a,b,c \in A$:
   \begin{align}
      &[(\partial a)_\lambda b]=-\lambda[a_\lambda b],~[a_\lambda (\partial b)] =(\partial+\lambda)[a_\lambda b], &&\text{(Conformal sesquilinearity)} \\
      &{[a_\lambda b] } =-(-1)^{\abs a\abs b}[b_{-\lambda-\partial} a], &&\text{(Skew-symmetry)}\\
      &{[\alpha(a)_\lambda[b_\mu c]] } =[[a_\lambda b]_{\lambda+\mu} \alpha(c)]+(-1)^{\abs a\abs b}[\alpha(b)_\mu[a_\lambda c]]. &&\text{(Hom-Jacobi identity)}
   \end{align}
\end{definition}

The notion of a left-symmetric conformal algebra was introduced by Hong and Li \cite{HL15}. Its super and Hom-super analogues appear naturally in the study of Novikov conformal superalgebras and Hom-Gel'fand-Dorfman conformal superbialgebras (cf. \cite{CThjms24,Hongjlt16}).
\begin{definition}\label{defi10}
A {\bf left-symmetric conformal superalgebra} $A=A_{\bar{0}}\oplus A_{\bar{1}}$ is a left $\Ztwo$-graded $\mathbb{C}[\partial]$-module endowed with a $\mathbb{C}$-bilinear map $A\otimes A\to A[\lambda]$, denoted by $a\otimes b\mapsto a \ast_{\lambda}b$ such that $(A,\ast_\lambda)$ is a conformal superalgebra and satisfies the following identity ($a,b,c\in {A}$):
   \begin{align}
        (a\ast_\lambda b)\ast_{\lambda+\mu}c-a\ast_\lambda(b\ast_\mu c)=(-1)^{\abs a\abs b}((b\ast_\mu a)\ast_{\lambda+\mu}c-b\ast_\mu(a\ast_\lambda c)).\label{eq:2-18}
   \end{align}
A {\bf (left) Novikov conformal superalgebra} $(A,\ast_\lambda)$ is a left-symmetric conformal superalgebra satisfying
   \begin{align}
      (a\ast_\lambda b)\ast_{\lambda+\mu}c=(-1)^{\abs b\abs c}(a\ast_\lambda c)\ast_{-\mu-\partial}b,\quad \forall ~a,b,c\in {A}.
   \end{align}
When $A_{\bar 1}=\{0\}$, $A$ is called a \emph{(left) Novikov conformal algebra}.
\end{definition}

By definition \eqref{defi10}, we can easily obtain
\begin{proposition} \label{propo1}
   Associative conformal superalgebras are left-symmetric conformal superalgebras. If $(A,\ast_\lambda)$ is a left-symmetric conformal superalgebra, then the $\lambda$-bracket
   \begin{align*}
      [a_\lambda b]=a\ast_\lambda b-(-1)^{\abs a\abs b}(b\ast_{-\lambda-\partial}a),\quad \forall~ a,b \in {A}
   \end{align*}
   defines a Lie conformal superalgebra $(A,[\cdot_{\lambda}\cdot])$.
\end{proposition}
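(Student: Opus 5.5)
The first claim follows directly from the definitions. If $(A,\circ_\lambda)$ is associative, then by \eqref{ly} the associator
\[
(a,b,c)_{\lambda,\mu}:=(a\circ_\lambda b)\circ_{\lambda+\mu}c-a\circ_\lambda(b\circ_\mu c)
\]
vanishes identically. Hence both sides of \eqref{eq:2-18} are zero, and \eqref{eq:2-18} holds trivially. So every associative conformal superalgebra is left-symmetric.

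For the second claim, I set $[a_\lambda b]=a\ast_\lambda b-(-1)^{\abs a\abs b}b\ast_{-\lambda-\partial}a$ and verify the three axioms in turn.

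\emph{Sesquilinearity and parity.} Parity preservation is immediate, since $\ast_\lambda$ respects the grading. For sesquilinearity I use the standard rules for the term $b\ast_{-\lambda-\partial}a$:
\[
b\ast_{-\lambda-\partial}(\partial a)=-\lambda\,(b\ast_{-\lambda-\partial}a),\qquad (\partial b)\ast_{-\lambda-\partial}a=(\lambda+\partial)(b\ast_{-\lambda-\partial}a).
\]
These follow from the sesquilinearity of $\ast_\lambda$ by substituting $\lambda\mapsto-\lambda-\partial$, with $\partial$ acting on the coefficients.

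\emph{Skew-symmetry.} Replacing $\lambda$ by $-\lambda-\partial$ in the definition swaps the two terms and introduces the sign $-(-1)^{\abs a\abs b}$. This is the usual check, using $(-1)^{2\abs a\abs b}=1$.

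\emph{Jacobi identity.} This is the main step. I expand all three terms of \eqref{Jacobi} into twelve terms built from $\ast$. Each term is of the form $x\ast(y\ast z)$ or $(x\ast y)\ast z$, with various substitutions of $-\lambda-\partial$, $-\mu-\partial$ and $\lambda+\mu$. I then group these terms into associator expressions $(x,y,z)$ and apply \eqref{eq:2-18}, which says that the associator is supersymmetric in its first two arguments. The twelve terms should split into three pairs of associators:
\[
(a,b,c)_{\lambda,\mu}\ \text{vs.}\ (b,a,c)_{\mu,\lambda},\qquad
(a,c,b)\ \text{vs.}\ (c,a,b),\qquad
(b,c,a)\ \text{vs.}\ (c,b,a).
\]
The second and third pairs carry the appropriately substituted spectral parameters and Koszul signs $(-1)^{\abs a\abs c}$, $(-1)^{\abs b\abs c}$, and so on. Each pair then cancels by \eqref{eq:2-18}, and I apply the associative-type rewriting of Remark~\ref{rem1} to move the variables $-\partial-\lambda$ into matching positions.

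The obstacle I expect is the bookkeeping in the terms containing $\partial$-substituted parameters, for example $(c\ast_{-\lambda-\mu-\partial}(a\ast_\lambda b))$. There one has to check that
\[
(x\ast_{-\partial-\nu}y)\ast_\kappa z
\]
and the analogous expressions substitute consistently, which follows from sesquilinearity. I would handle this by first reducing to the statement that the Jacobi identity holds whenever the associator satisfies \eqref{eq:2-18}. This is a known fact in the non-super case (Hong--Li \cite{HL15}), and I would adapt it by tracking the Koszul signs term by term.
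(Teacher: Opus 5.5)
Your proposal is correct, and it is the direct verification the paper leaves to the reader when it says the proposition follows from the definition: the twelve terms of the Jacobi identity do split into the three associator pairs you list (the last two evaluated at $\nu=-\lambda-\mu-\partial$), and each pair cancels by \eqref{eq:2-18}. The one slip is the appeal to Remark~\ref{rem1}, whose identities use associativity and so are not available here; the rewritings you actually need, such as $a\ast_\lambda(c\ast_{-\mu-\partial}b)=a\ast_\lambda(c\ast_\nu b)|_{\nu=-\lambda-\mu-\partial}$ and $(c\ast_{-\lambda-\partial}a)\ast_{-\mu-\partial}b=(c\ast_\nu a)\ast_{\nu+\lambda}b|_{\nu=-\lambda-\mu-\partial}$, follow from sesquilinearity alone.
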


\begin{definition}[{\cite{KKPjne21}}]
A \textbf{Poisson conformal superalgebra} (PCSA) is a $\Ztwo$-graded $\mathbb{C}[\partial]$-module $A$ endowed with two $\mathbb{C}$-bilinear maps $A \otimes A \rightarrow A[\lambda]$, denoted by $a\otimes b\mapsto a \circ_{\lambda}b$ and $a\otimes b\mapsto [a_{\lambda}b]$, respectively, such that $(A,\circ_\lambda)$ is a {commutative associative conformal superalgebra}, $(A,[{\cdot_ \lambda \cdot}])$ is a {Lie conformal superalgebra} and the following {conformal analogue of the Leibniz rule} holds:
   \begin{align}\label{rule1}
      [a_{\lambda}(b\circ_{\mu}c)]=[a_{\lambda}b]\circ_{\lambda+\mu}c+(-1)^{\abs a\abs b}b\circ_{\mu}[a_{\lambda}c],\quad\forall~ a,b,c\in A.		
   \end{align}
If the associative $\lambda$-product $\circ_{\lambda}$ in the Poisson conformal superalgebra $A$ is noncommutative, we call it a {\emph{noncommutative PCSA}}.
\end{definition}

\begin{remark}
Relation \eqref{rule1} is equivalent to
   \begin{equation}\label{rule6}
      [(a \circ_\lambda b)_\mu c] = a \circ_\lambda [b _{\mu - \lambda} c]+(-1)^{\abs a \abs b}b \circ_{\mu - \lambda} [a _\lambda c],\quad\forall~ a,b,c\in A.
   \end{equation}
\end{remark}

\begin{definition}[{\cite{FKLrac21}}]
Let $\mathcal{L}=\mathcal{L}_{0}\oplus \mathcal{L}_{1}$ be a $\mathbb{Z}_{2}$-graded vector space equipped with two nonzero bilinear super-operations $\cdot$ and $[\ ,\ ]$. The triple $(\mathcal{L},\cdot,[\ ,\ ])$ is called a \textbf{transposed Poisson superalgebra} if $(\mathcal{L},\cdot)$ is a supercommutative associative superalgebra and $(\mathcal{L},[\ ,\ ])$ is a Lie superalgebra that satisfies the following compatibility condition
\begin{equation}
    2z\cdot [x,y]=[z\cdot x,y]+(-1)^{|x||z|}[x,z\cdot y],
    \qquad x,y,z\in \mathcal{L}_{0}\cup \mathcal{L}_{1}.
\end{equation}
\end{definition}

\begin{definition}[{\cite{YF2026}}]
   A \textbf{transposed Poisson conformal algebra} (TPCA)  is a $\mathbb{C}[\partial]$-module $\mathcal{L}$ endowed with two $\mathbb{C}$-bilinear maps
   $$\circ_{\lambda},\, [{\cdot_ \lambda \cdot}]: \mathcal{L}  \otimes \mathcal{L}  \rightarrow \mathcal{L} [\lambda],$$ such that $(\mathcal{L} ,\circ_\lambda)$ is a {commutative associative conformal algebra}, $(\mathcal{L} ,[{\cdot_ \lambda \cdot}])$ is a {Lie conformal algebra} and the following \emph{transposed conformal Leibniz rule} holds:
   \begin{align}\label{rule3-tpca}
      2 (a\circ_{\lambda}[b_{\mu}c])=[(a\circ_{\lambda}b)_{\lambda+\mu}c]+[b_{\mu}(a\circ_{\lambda}c)],\quad\forall~ a,b,c\in \mathcal{L}.	
   \end{align}
   If the associative $\lambda$-product $\circ_{\lambda}$ in the transposed Poisson conformal algebra $\mathcal{L}$ is noncommutative, we call it a \textbf{noncommutative TPCA}.  
\end{definition}

\section{Transposed Poisson conformal superalgebras}\label{sec3}

In this section, we introduce the notion of a transposed Poisson conformal superalgebra, which is a conformal analogue of the transposed Poisson superalgebra. Then we present some identities and properties of transposed Poisson conformal superalgebras.

\subsection{Definition of transposed Poisson conformal superalgebras}

\begin{definition}
   A \textbf{transposed Poisson conformal superalgebra} (TPCSA)  is a $\mathbb{Z}_{2}$-graded $\mathbb{C}[\partial]$-module $\mathcal L=\mathcal L_{\bar 0}\oplus \mathcal L_{\bar 1}$ endowed with two $\mathbb{C}$-bilinear maps
   $$\circ_{\lambda},\, [{\cdot_ \lambda \cdot}]: \mathcal{L}  \otimes \mathcal{L}  \rightarrow \mathcal{L} [\lambda],$$ such that $(\mathcal{L} ,\circ_\lambda)$ is a {commutative associative conformal superalgebra}, $(\mathcal{L} ,[{\cdot_ \lambda \cdot}])$ is a {Lie conformal superalgebra} and the following \emph{transposed conformal super-Leibniz rule} holds:
   \begin{align}\label{rule3}
      2 (a\circ_{\lambda}[b_{\mu}c])=[(a\circ_{\lambda}b)_{\lambda+\mu}c]+(-1)^{\abs a \abs b}[b_{\mu}(a\circ_{\lambda}c)],\quad\forall~ a,b,c\in \mathcal{L}.	
   \end{align}
   When $\mathcal{L}_{\bar 1}=\{0\}$, this definition reduces to that of a transposed Poisson conformal algebra.
   If the associative $\lambda$-product $\circ_{\lambda}$ in the transposed Poisson conformal superalgebra $\mathcal{L}$ is noncommutative, we call it a \emph{noncommutative TPCSA}.  
\end{definition}

\begin{remark}
	The relation \eqref{rule3} can be equivalently written as 
	\begin{align}\label{rule5}	2([a_{\lambda}b]\circ_{\mu}c)=[a_{{\lambda}}(b\circ_{\mu-\lambda}c)]-(-1)^{\abs a \abs b}[b_{\mu-\lambda}(a\circ_{\lambda}c)],\quad\forall~ a,b,c\in \mathcal{L}.
	\end{align}
\end{remark}

\begin{remark} Let $(\mathcal{L},\circ_{\lambda},[{\cdot_ \lambda \cdot}])$
be a TPCSA and let $a$ be a homogeneous element from $\mathcal{L}$. 
Then the left multiplication $L(a)$ in the associative commutative superalgebra $(\mathcal{L},\circ_{\lambda})$, defined by $$L(a)_\lambda (b)=a\circ_\lambda b, \quad\forall~ b\in\mathcal{L},$$
 gives a conformal 
$\frac 12$-derivation of the Lie conformal superalgebra $(\mathcal{L},[{\cdot_ \lambda \cdot}])$.
\end{remark}

Let $(\mathcal{L},\circ_{\lambda},[{\cdot_ \lambda \cdot}])$ be a (noncommutative) TPCSA.
Define the $n$-th products on the (commutative) associative conformal superalgebra $(\mathcal{L},\circ_{\lambda})$ and the Lie conformal superalgebra $(\mathcal{L},[{\cdot_ \lambda \cdot}])$ for $n\in \mathbb{Z}_{+}$, respectively, by
   \begin{equation*}
      a\circ_\lambda b=\sum_{n\in \mathbb{Z}_{+}}\lambda^{(n)}(a_{(n)}b),\quad[a_\lambda b]=\sum_{n\in \mathbb{Z}_{+}}\lambda^{(n)}(a_{[n]}b),
   \end{equation*}
where $\lambda^{(n)}=\lambda^{n}/{n!}$ and $a,b\in \mathcal{L}$. Then, the notions of (commutative) associative conformal superalgebra and Lie conformal superalgebra can be equivalently expressed in terms of their respective $n$-th products.
Moreover, the transposed conformal super-Leibniz rule of the (noncommutative) TPCSA can be rewritten in terms of $n$-th products of associative conformal superalgebras and Lie conformal superalgebras as follows:
   \begin{equation}\label{rule4}
      2(a_{(n)}(b_{[m]}c))=\sum_{j=0}^n\binom{n}{j}(a_{(j)}b)_{[n+m-j]}c+(-1)^{\abs a \abs b}b_{{[m]}}(a_{(n)}c),\quad\forall~ a,b,c\in \mathcal{L}.
   \end{equation}

\begin{example}
  Let $(A,\circ,[\cdot,\cdot])$ be an ordinary transposed Poisson superalgebra. Then $\mathcal{L}=\mathbb{C}[\partial]\otimes A$ equipped with
  $$a \circ_{\lambda} b=a \circ b,\quad[{a_ \lambda b}]=[a,b],\quad\forall~ a,b\in A$$
  is a transposed Poisson conformal superalgebra.
\end{example}

\begin{example}
Let $(\mathcal{L}_1 ,\circ_{1\lambda},[\cdot_{\lambda}\cdot]_1,\partial_1)$ and $(\mathcal{L}_2 ,\circ_{2\lambda},[\cdot_{\lambda}\cdot]_2,\partial_2)$ be two TPCSAs,
where $\mathcal{L}_1=(\mathcal{L}_1)_{\bar 0}\oplus(\mathcal{L}_1)_{\bar 1}$, $\mathcal{L}_2=(\mathcal{L}_2)_{\bar 0}\oplus(\mathcal{L}_2)_{\bar 1}$.
Define a $\mathbb{Z}_2$-grading on $\mathcal{L}_1\oplus\mathcal{L}_2$ by
$$(\mathcal{L}_1\oplus\mathcal{L}_2)_{j}=(\mathcal{L}_1)_{j}\oplus(\mathcal{L}_2)_{j},\quad j\in \Ztwo.$$
For all $x_1,y_1 \in \mathcal{L}_1$, $x_2,y_2 \in \mathcal{L}_2,$ define
\begin{align*}
(x_1+x_2)\circ_{\lambda}(y_1+y_2)&=x_1\circ_{1\lambda}y_1+x_2\circ_{2\lambda}y_2, \\
[(x_1+x_2)_{\lambda}(y_1+y_2)]&=[{x_1}_{\lambda}y_1]_1+[{x_2}_{\lambda}y_2]_2.
\end{align*}
Then $(\mathcal{L}_1 \oplus \mathcal{L}_2, \circ_{\lambda}, [\cdot_{\lambda}\cdot], \partial=\partial_1\oplus\partial_2)$ is a TPCSA.
\end{example}

\subsection{Identities in transposed Poisson conformal superalgebras}
There is a rich class of identities for TPCSAs, as listed below.
\begin{theorem}
   Let $(\mathcal{L} ,\circ_\lambda,[{\cdot_ \lambda \cdot}])$ be a TPCSA. Then the following identities hold:
   \begin{align}
      &(-1)^{\abs x \abs z}x\circ_{\lambda}[y_{\mu}z]+(-1)^{\abs x \abs y}y\circ_{\mu}[z_{-\partial-\lambda}x]+(-1)^{\abs y \abs z}z\circ_{-\partial-\lambda-\mu}[x_{\lambda}y]=0, \label{eqh}\\
      &(-1)^{\abs x \abs z}[x_{\lambda}y]\circ_{\mu}z+(-1)^{\abs x \abs y}[y_{\mu-\lambda}z]\circ_{-\partial-\lambda}x+(-1)^{\abs y \abs z}[z_{-\partial-\lambda}x]\circ_{-\partial-\mu+\lambda}y=0, \label{eqw}\\
      &(-1)^{\abs x \abs z}[(h\circ_{\lambda}[x_{\gamma}y])_{\lambda+\mu}z]+(-1)^{\abs x \abs y}[(h\circ_{\lambda}[y_{\mu-\gamma}z])_{-\partial-\gamma}x]\nonumber\\
      &\hphantom{(-1)^{\abs x \abs z}[(h\circ_{\lambda}[x_{\gamma}y])_{\lambda+\mu}z](-1)^{\abs x \abs y}(-1)} +(-1)^{\abs y \abs z}[(h\circ_{\lambda}[z_{-\partial-\gamma}x])_{-\partial-\mu+\gamma}y]=0, \label{eqs}\\
&(-1)^{|x||z|}[(h\circ_{\lambda}x)_{\lambda+\gamma}[y_{\mu-\gamma}z]]+(-1)^{|x||y|}[(h\circ_{\lambda}y)_{\lambda+\mu-\gamma}[z_{-\partial-\gamma}x]]\nonumber\\
&\hphantom{(-1)^{|x||z|}[(h\circ_{\lambda}x)_{\lambda+\gamma}[y_{\mu-\gamma}z]](-1)^{\abs x \abs y}(-1)^{\abs x \abs y}}+(-1)^{|y||z|}[(h\circ_{\lambda}z)_{-\partial-\mu}[x_{\gamma}y]]=0,  \label{equ}\\           
      &(-1)^{\abs x \abs z}[h_{\lambda}x]\circ_{\lambda+\gamma}[y_{\mu-\gamma}z]+(-1)^{\abs x \abs y}[h_{\lambda}y]\circ_{\lambda+\mu-\gamma}[z_{-\partial-\gamma}x]\nonumber\\
      &\hphantom{(-1)^{\abs x \abs z}[h_{\lambda}x]\circ_{\lambda+\gamma}[y_{\mu-\gamma}z]+(-1)^{\abs x \abs y}[h_{\lambda}y]\circ_\lambda~~}  +(-1)^{\abs y \abs z}[h_{\lambda}z]\circ_{-\partial-\mu}[x_{\gamma}y]=0,  \label{eql}\\
      &(-1)^{\abs v \abs x}[(u\circ_{\lambda}x)_{\mu}(v\circ_{\gamma}y)]+(-1)^{\abs u(\abs x +\abs v)}[(v\circ_{\gamma}x)_{\gamma+\mu-\lambda}(u\circ_{\lambda}y)]=2\left(u\circ_{\lambda}v\right)\circ_{\lambda+\gamma}[x_{\mu-\lambda}y], \label{ss1}\\
      &(-1)^{\abs v \abs x}x\circ_{\mu-\lambda}[u_{\lambda}(v\circ_{\gamma}y)]+(-1)^{\abs u \abs v}[(v\circ_{\gamma}x)_{\gamma+\mu-\lambda}u]\circ_{\gamma+\mu}y \nonumber\\
      &\hphantom{(-1)^{\abs x \abs z}[(h\circ_{\lambda}[x_{\gamma}y])_{\lambda+\mu}z](-1)^{\abs x \abs y}(-1)-1}  -(-1)^{\abs u \abs x}\left(u\circ_{\lambda}v\right)\circ_{\lambda+\gamma}[x_{\mu-\lambda}y]=0,\label{ss2}
   \end{align}
   for all $x,y,z,h,u,v \in \mathcal{L}$.
\end{theorem}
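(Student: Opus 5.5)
The plan is to mimic the non-graded proofs for transposed Poisson algebras (Bai et al.) and for TPCAs, keeping track of parity signs and of the spectral parameters via sesquilinearity and the substitution rules of Remark~\ref{rem1}.

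For \eqref{eqh}, write \eqref{rule3} three times, once for each cyclic permutation $(x,y,z)$, $(y,z,x)$, $(z,x,y)$, with parameters $(\lambda,\mu)$, $(\mu,-\partial-\lambda-\mu)$, $(-\partial-\lambda-\mu,\lambda)$ respectively. Multiply them by $(-1)^{|x||z|}$, $(-1)^{|x||y|}$, $(-1)^{|y||z|}$ and add. Using skew-symmetry of $[\cdot_\lambda\cdot]$ and supercommutativity of $\circ_\lambda$, each right-hand term $[(a\circ b)_{\cdot}c]$ from one equation should cancel against a term $[b_{\cdot}(a\circ c)]$ from another equation. Rewriting $x\circ_\lambda z$ as $(-1)^{|x||z|}z\circ_{-\partial-\lambda}x$, the Jacobi-type cancellations are standard. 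This leaves $2\times$(LHS of \eqref{eqh}) $=0$.

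The cleaner alternative is to show that the RHS sum equals the LHS sum with opposite sign, which forces vanishing. In the ordinary case the proof uses Jacobi together with the rule applied to $x\circ[y,z]$. Here I would instead apply \eqref{rule3} with $a=x$ and use Jacobi on $[(x\circ y)_{\lambda+\mu}z]$ terms, mirroring the TPCA argument of \cite{YF2026}.

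Identity \eqref{eqw} then follows from \eqref{eqh} by rewriting each term with \eqref{rule5} / commutativity, namely $a\circ_\lambda[b_\mu c]=(-1)^{\dots}[b_\mu c]\circ_{-\partial-\lambda}a$, and renaming spectral parameters.

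The identities \eqref{eqs}, \eqref{equ}, \eqref{eql} come from applying operators to the Jacobi identity written cyclically:
\[
(-1)^{|x||z|}[x_\gamma[y_{\mu-\gamma}z]]+\text{cyclic}=0.
\]
For \eqref{equ}, act by $2h\circ_\lambda$ and expand using \eqref{rule3}. This gives the sum of \eqref{eqs}-type and \eqref{equ}-type terms equal to zero. Separately, \eqref{eqs} follows from \eqref{eqh} by substituting $[x_\gamma y]$-type arguments and using \eqref{rule3} again, or from \eqref{eqw} with $h$ inserted via associativity. Combining the two gives \eqref{equ}.

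For \eqref{eql}, use the conformal $\tfrac12$-derivation property of $L(h)$ together with Jacobi: expand $h\circ_\lambda$ of the cyclic Jacobi sum and apply \eqref{rule5}. This expresses $[h_\lambda x]\circ[y z]$ as a combination of \eqref{eqs}- and \eqref{equ}-type brackets, which vanish.

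For \eqref{ss1}, expand both brackets using \eqref{rule3} twice (first pulling $u\circ_\lambda$ out, then $v\circ_\gamma$), together with associativity and Remark~\ref{rem3}. The mixed terms should cancel between the two summands, leaving $2(u\circ_\lambda v)\circ_{\lambda+\gamma}[x_{\mu-\lambda}y]$. Identity \eqref{ss2} is a similar expansion of $[u_\lambda(v\circ_\gamma y)]$ via \eqref{rule5}, followed by substitution of \eqref{ss1}.

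The main obstacle is purely bookkeeping: matching the spectral parameters $-\partial-\lambda$ etc. correctly when swapping arguments, where $\partial$ acts on the whole expression. I would fix a convention first, checking each substitution against Remark~\ref{rem1}. I would also verify signs by first checking the even case, which reduces to \cite{YF2026}, before inserting parities.
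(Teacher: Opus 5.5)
Your arguments for \eqref{eqh}, \eqref{eqw}, \eqref{ss1} and \eqref{ss2} are essentially the paper's. For \eqref{eqh}, you take three instances of \eqref{rule3} with the cyclic signs; the bracket terms cancel in pairs by skew-symmetry and supercommutativity, and Jacobi is not needed. For \eqref{eqw}, supercommutativity alone suffices. Your first step toward \eqref{equ} also matches the paper: apply $2h\circ_\lambda$ to the cyclic Jacobi identity and expand by \eqref{rule3}. Write $S$, $U$, $T$ for the left-hand sides of \eqref{eqs}, \eqref{equ}, \eqref{eql}. After rewriting each $[[x_\gamma y]_\mu(h\circ_\lambda z)]$ by skew-symmetry, this step gives $S-U=0$.

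\textbf{The gap.} You need a second, independent relation to conclude $S=0$, and your suggested sources do not supply it.
\begin{itemize}
\item Substituting $[x_\gamma y]$ into \eqref{eqh} and re-expanding by \eqref{rule3} collapses to $0=0$. This is because \eqref{eqh} is itself a formal consequence of \eqref{rule3}, skew-symmetry and commutativity alone.
\item If you first sum cyclically and use Jacobi, you get only $U=2T$.
\item Applying \eqref{rule5} to $[h_\lambda z]\circ_{-\partial-\mu}[x_\gamma y]$ and using \eqref{eqh} gives $S=2T$.
\end{itemize}
So you end up with $S=U$, $S=2T$, $U=2T$. This system has rank two in three unknowns and forces nothing to vanish. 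Inserting $h$ into \eqref{eqw} by associativity only produces identities among $\circ$-products. It never produces brackets such as $[(h\circ_\lambda[x_\gamma y])_{\lambda+\mu}z]$.

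\textbf{The missing idea.} Use the Jacobi identity a second time, now with $h\circ_\lambda z$ as an argument, and then apply \eqref{rule3} to the inner bracket:
\begin{align*}
[[x_\gamma y]_\mu(h\circ_\lambda z)]&=[x_\gamma[y_{\mu-\gamma}(h\circ_\lambda z)]]-(-1)^{|x||y|}[y_{\mu-\gamma}[x_\gamma(h\circ_\lambda z)]],\\
(-1)^{|h||y|}[y_{\mu-\gamma}(h\circ_\lambda z)]&=2\,h\circ_\lambda[y_{\mu-\gamma}z]-[(h\circ_\lambda y)_{\lambda+\mu-\gamma}z].
\end{align*}
The term $2[x_\gamma(h\circ_\lambda[y_{\mu-\gamma}z])]$ is, up to sign, twice a summand of $S$. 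Do the same for the other two cyclic terms. The leftover double brackets, such as $[x_\gamma[(h\circ_\lambda y)_{\lambda+\mu-\gamma}z]]$ and $[y_{\mu-\gamma}[x_\gamma(h\circ_\lambda z)]]$, then cancel in pairs by skew-symmetry. This gives $2S-U=0$. Together with $S-U=0$ it yields $S=U=0$, which is \eqref{eqs} and \eqref{equ}.

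Only after this does \eqref{eql} follow. The inputs there are \eqref{rule5}, \eqref{eqh} and \eqref{eqs}, not \eqref{equ}. The terms $[h_\lambda(x\circ_\gamma[y_{\mu-\gamma}z])]$ sum to zero by \eqref{eqh}, and the remaining terms form $S$.
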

\begin{proof}
Let $x,y,z,h,u,v \in \mathcal{L}$. In fact, Eq.~\eqref{eqw} is equivalent to Eq.~\eqref{eqh}. By Eqs.~\eqref{rule3} and \eqref{rule5}, we have
   \begin{align*}
      &(-1)^{\abs x \abs z}{2} (x\circ_{\lambda}[y_{\mu}z])=(-1)^{\abs x \abs z}[(x\circ_{\lambda}y)_{\lambda+\mu}z]+(-1)^{\abs x (\abs y+\abs z)}[y_{\mu}(x\circ_{\lambda}z)],	\\
      &(-1)^{\abs x \abs y}{2} (y\circ_{\mu}[z_{-\partial-\lambda}x])=-(-1)^{\abs x (\abs y+\abs z)}[(y\circ_{\mu}x)_{\lambda+\mu}z]-(-1)^{\abs x \abs z}[x_{\lambda}(y\circ_{\mu}z)],\\
      &(-1)^{\abs y \abs z}{2} (z\circ_{-\partial-\lambda-\mu}[x_{\lambda}y])=(-1)^{\abs x \abs z}[x_{{\lambda}}(y\circ_{\mu}z)]-(-1)^{\abs x (\abs y+\abs z)}[y_{\mu}(x\circ_{\lambda}z)].
   \end{align*}

Note that $[(x\circ_{\lambda}y)_{\lambda+\mu}z]=(-1)^{\abs x \abs y}[(y\circ_{-\partial-\lambda}x)_{\lambda+\mu}z]=(-1)^{\abs x \abs y}[(y\circ_{\mu}x)_{\lambda+\mu}z]$. Taking the sum of the three identities yields Eq.~\eqref{eqh}.

The proof of Eq.~\eqref{eqs} is more involved. First by Eq.~\eqref{rule3}, we have
   \begin{align*}
      (-1)^{\abs x \abs z}2(h\circ_{\lambda}[[x_{\gamma}y]_{\mu}z])=&(-1)^{\abs x \abs z}[(h\circ_{\lambda}[x_{\gamma}y])_{\lambda+\mu}z]\\
      &+(-1)^{\abs x \abs z+\abs h(\abs x +\abs y)}[[x_{\gamma}y]_{\mu}(h\circ_{\lambda}z)],\\
      -(-1)^{\abs x \abs z}2(h\circ_{\lambda}[x_{\gamma}[y_{\mu-\gamma}z]])=&(-1)^{\abs x\abs y }[(h\circ_{\lambda}[y_{\mu-\gamma}z])_{-\partial-\gamma}x]\\
      &+(-1)^{\abs x\abs y+\abs h(\abs y +\abs z)}[[y_{\mu-\gamma}z]_{-\partial-\lambda-\gamma}(h\circ_{\lambda}x)],\\
      (-1)^{\abs x (\abs y+\abs z)}2(h\circ_{\lambda}[y_{\mu-\gamma}[x_{\gamma}z]])=&(-1)^{\abs y\abs z}[(h\circ_{\lambda}[z_{-\partial-\gamma}x])_{-\partial-\mu+\gamma}y]\\
      &+(-1)^{\abs y\abs z+\abs h(\abs x+ \abs z)}[[z_{-\partial-\gamma}x]_{-\partial-\lambda-\mu+\gamma}(h\circ_{\lambda}y)].
   \end{align*}
Summing the three identities above and applying the Jacobi identity~\eqref{Jacobi}, we obtain
   \begin{eqnarray}\label{Id1}
      \begin{aligned}
      &(-1)^{\abs x \abs z}[(h\circ_{\lambda}[x_{\gamma}y])_{\lambda+\mu}z]+(-1)^{\abs x \abs y}[(h\circ_{\lambda}[y_{\mu-\gamma}z])_{-\partial-\gamma}x]\\
      &+(-1)^{\abs y \abs z}[(h\circ_{\lambda}[z_{-\partial-\gamma}x])_{-\partial-\mu+\gamma}y]\\
      &+(-1)^{\abs x \abs z+\abs h(\abs x +\abs y)}[[x_{\gamma}y]_{\mu}(h\circ_{\lambda}z)]
      +(-1)^{\abs x\abs y+\abs h(\abs y +\abs z)}[[y_{\mu-\gamma}z]_{-\partial-\lambda-\gamma}(h\circ_{\lambda}x)]\\
      &+(-1)^{\abs y\abs z+\abs h(\abs x+ \abs z)}[[z_{-\partial-\gamma}x]_{-\partial-\lambda-\mu+\gamma}(h\circ_{\lambda}y)]=0.
      \end{aligned}
   \end{eqnarray}
Then, applying the Jacobi identity to $[[x_{\gamma}y]_{\mu}(h\circ_{\lambda}z)]$ yields
   \begin{align*}
      [[x_{\gamma}y]_{\mu}(h\circ_{\lambda}z)]+(-1)^{\abs x \abs y}[y_{\mu-\gamma}[x_{\gamma}(h\circ_{\lambda}z)]]=[x_{\gamma}[y_{\mu-\gamma}(h\circ_{\lambda}z)]]
   \end{align*}
and applying Eq.~\eqref{rule3} to $[y_{\mu-\gamma}(h\circ_{\lambda}z)]$ yields
   \begin{align*}
      (-1)^{\abs h \abs y}[y_{\mu-\gamma}(h\circ_{\lambda}z)]=2\left(h\circ_{\lambda}[y_{\mu-\gamma}z]\right)-[(h\circ_{\lambda}y)_{\lambda+\mu-\gamma}z].
   \end{align*}
Hence, we arrive at
      \begin{align*}
      &(-1)^{\abs x \abs z+\abs h(\abs x +\abs y)}[[x_{\gamma}y]_{\mu}(h\circ_{\lambda}z)]+(-1)^{\abs x\abs y}2[(h\circ_{\lambda}[y_{\mu-\gamma}z])_{-\partial-\gamma}x]\\
      &=-(-1)^{\abs x \abs z+\abs h\abs x }[x_{\gamma}[(h\circ_{\lambda}y)_{\lambda+\mu-\gamma}z]]-(-1)^{\abs x (\abs y+\abs z)+\abs h(\abs x +\abs y)}[y_{\mu-\gamma}[x_{\gamma}(h\circ_{\lambda}z)]].
   \end{align*}
In a similar way, we have
   \begin{align*}
      &(-1)^{\abs x\abs y+\abs h(\abs y +\abs z)}[[y_{\mu-\gamma}z]_{-\partial-\lambda-\gamma}(h\circ_{\lambda}x)]+(-1)^{\abs y\abs z}2[(h\circ_{\lambda}[z_{-\partial-\gamma}x])_{-\partial-\mu+\gamma}y] \nonumber\\
      &=(-1)^{\abs x (\abs y+\abs z)+\abs h(\abs x +\abs y)}[y_{\mu-\gamma}[x_{\gamma}(h\circ_{\lambda}z)]]-(-1)^{\abs x \abs z}[[(h\circ_{\lambda}x)_{\lambda+\gamma}y]_{\lambda+\mu}z],\\
      &(-1)^{\abs y\abs z+\abs h(\abs x+ \abs z)}[[z_{-\partial-\gamma}x]_{-\partial-\lambda-\mu+\gamma}(h\circ_{\lambda}y)]+(-1)^{\abs x\abs y}2[(h\circ_{\lambda}[x_{\gamma}y])_{\lambda+\mu}z] \nonumber\\
      &=(-1)^{\abs x \abs z+\abs h\abs x }[x_{\gamma}[(h\circ_{\lambda}y)_{\lambda+\mu-\gamma}z]]+(-1)^{\abs x \abs z}[[(h\circ_{\lambda}x)_{\lambda+\gamma}y]_{\lambda+\mu}z].
   \end{align*}
Upon summing the three identities above, we obtain
   \begin{eqnarray}
      \begin{aligned}\label{Id2}
      &(-1)^{\abs x \abs z}2[(h\circ_{\lambda}[x_{\gamma}y])_{\lambda+\mu}z]+(-1)^{\abs x \abs y}2[(h\circ_{\lambda}[y_{\mu-\gamma}z])_{-\partial-\gamma}x]\\
      &+(-1)^{\abs y \abs z}2[(h\circ_{\lambda}[z_{-\partial-\gamma}x])_{-\partial-\mu+\gamma}y]\\
      &+(-1)^{\abs x \abs z+\abs h(\abs x +\abs y)}[[x_{\gamma}y]_{\mu}(h\circ_{\lambda}z)]
      +(-1)^{\abs x\abs y+\abs h(\abs y +\abs z)}[[y_{\mu-\gamma}z]_{-\partial-\lambda-\gamma}(h\circ_{\lambda}x)]\\
      &+(-1)^{\abs y\abs z+\abs h(\abs x+ \abs z)}[[z_{-\partial-\gamma}x]_{-\partial-\lambda-\mu+\gamma}(h\circ_{\lambda}y)]=0.
      \end{aligned}
   \end{eqnarray}
Taking the difference between Eq.~\eqref{Id1} and Eq.~\eqref{Id2} yields Eq.~\eqref{eqs}.
Moreover, by substituting Eq.~\eqref{eqs} into Eq.~\eqref{Id1}, we obtain
      \begin{align*}
      &(-1)^{\abs x \abs z+\abs h(\abs x +\abs y)}[[x_{\gamma}y]_{\mu}(h\circ_{\lambda}z)]
      +(-1)^{\abs x\abs y+\abs h(\abs y +\abs z)}[[y_{\mu-\gamma}z]_{-\partial-\lambda-\gamma}(h\circ_{\lambda}x)]\\
      &+(-1)^{\abs y\abs z+\abs h(\abs x+ \abs z)}[[z_{-\partial-\gamma}x]_{-\partial-\lambda-\mu+\gamma}(h\circ_{\lambda}y)]=0.
      \end{align*}
After a suitable permutation of the arguments, e.g. $$(-1)^{\abs x \abs z+\abs h(\abs x +\abs y)}[[x_{\gamma}y]_{\mu}(h\circ_{\lambda}z)]=-(-1)^{\abs y \abs z}[(h\circ_{\lambda}z)_{-\partial-\mu}[x_{\gamma}y]],$$ we get Eq.~\eqref{equ}. 

By Eq.~\eqref{rule3}, we obtain
   \begin{align}\label{yqq2}
      2\left([x_{\gamma}y]\circ_{\mu}[h_{\lambda}z]\right)=[([x_{\gamma}y]\circ_{\mu}h)_{\lambda+\mu}z]+(-1)^{\abs h (\abs x+\abs y)}[h_{\lambda}([x_{\gamma}y]\circ_{\mu}z)].
   \end{align}
Observe that
   \begin{align*}
      (-1)^{\abs h (\abs x+\abs y)}[(h\circ_{\lambda}[x_{\gamma}y])_{\lambda+\mu}z]=[([x_{\gamma}y]\circ_{-\partial-\lambda}h)_{\lambda+\mu}z]
      =[([x_{\gamma}y]\circ_{\mu}h)_{\lambda+\mu}z].
   \end{align*}
Thus, Eq.~\eqref{yqq2} can be rewritten as
   \begin{align*}
      (-1)^{\abs y \abs z}2\left([h_{\lambda}z]\circ_{-\partial-\mu}[x_{\gamma}y]\right)=(-1)^{\abs x \abs z}[(h\circ_{\lambda}[x_{\gamma}y])_{\lambda+\mu}z]+(-1)^{\abs y \abs z}[h_{\lambda}(z\circ_{-\partial-\mu}[x_{\gamma}y])].
   \end{align*}
Similarly, by Eq.~\eqref{rule5}, we have
   \begin{align*}
      &(-1)^{\abs x \abs z}2([h_{\lambda}x]\circ_{\lambda+\gamma}[y_{\mu-\gamma}z])=(-1)^{\abs x \abs y}[(h\circ_{\lambda}[y_{\mu-\gamma}z])_{-\partial-\gamma}x]+(-1)^{\abs x \abs z}[h_{{\lambda}}(x\circ_{\gamma}[y_{\mu-\gamma}z])],\\
      &(-1)^{\abs x \abs y}2([h_{\lambda}y]\circ_{\lambda+\mu-\gamma}[z_{-\partial-\gamma}x])=(-1)^{\abs y \abs z}[(h\circ_{\lambda}[z_{-\partial-\gamma}x])_{-\partial-\mu+\gamma}y]\\
      &\hphantom{(-1)^{\abs x \abs y}2([h_{\lambda}y]\circ_{\lambda+\mu-\gamma}[z_{-\partial-\gamma}x])=(-1)^{\abs x \abs y}(h\circ_{\lambda}[y_{\mu-\gamma}z])}+(-1)^{\abs x \abs y}[h_{{\lambda}}(y\circ_{\mu-\gamma}[z_{-\partial-\gamma}x])].
   \end{align*}
Summing the three identities above and applying Eqs.~\eqref{eqh} and \eqref{eqs}, we get Eq.~\eqref{eql}.

Finally, we turn to the proof of Eqs.~\eqref{ss1} and \eqref{ss2}. Since $(\mathcal{L} ,\circ_\lambda)$ is a {commutative associative conformal superalgebra}, we have
   \begin{align*}
        \left(u\circ_{\lambda}v\right)\circ_{\lambda+\gamma}[x_{\mu-\lambda}y]=u\circ_{\lambda}\left(v\circ_{\gamma}[x_{\mu-\lambda}y]\right).
   \end{align*}
By Eq.~\eqref{rule3}, we obtain
   \begin{align}\label{qq1}
      {4}(u\circ_{\lambda}\left(v\circ_{\gamma}[x_{\mu-\lambda}y]\right))=&(-1)^{\abs v \abs x}[(u\circ_{\lambda}x)_{\mu}(v\circ_{\gamma}y)]+(-1)^{\abs u(\abs x +\abs v)}[(v\circ_{\gamma}x)_{\gamma+\mu-\lambda}(u\circ_{\lambda}y)] \nonumber\\
      &+[(u\circ_{\lambda}(v\circ_{\gamma}x))_{\gamma+\mu}y]+(-1)^{\abs x(\abs u +\abs v)}[x_{\mu-\lambda}(u\circ_{\lambda}(v\circ_{\gamma}y))].
   \end{align}
Observe that
   \begin{align}\label{qq2}
      {2}\left(u\circ_{\lambda}v\right)\circ_{\lambda+\gamma}[x_{\mu-\lambda}y]&=[((u\circ_{\lambda}v)\circ_{\lambda+\gamma}x)_{\mu+\gamma}y]+(-1)^{\abs x(\abs u +\abs v)}[x_{\mu-\lambda}((u\circ_{\lambda}v)\circ_{\lambda+\gamma}y)] \nonumber\\
      &=[(u\circ_{\lambda}(v\circ_{\gamma}x))_{\gamma+\mu}y]+(-1)^{\abs x(\abs u +\abs v)}[x_{\mu-\lambda}(u\circ_{\lambda}(v\circ_{\gamma}y))].
   \end{align}
Then Eq.~\eqref{ss1} follows from taking the difference between Eq.~\eqref{qq1} and Eq.~\eqref{qq2}.

Note that the first term on the left-hand side of Eq.~\eqref{ss1} can be rewritten as
   \begin{align*}
      (-1)^{\abs v \abs x}[(u\circ_{\lambda}x)_{\mu}(v\circ_{\gamma}y)]&=(-1)^{\abs x(\abs u +\abs v)}[(x\circ_{-\partial-\lambda}u)_{\mu}(v\circ_{\gamma}y)]\\
      &=(-1)^{\abs x(\abs u +\abs v)}[(x\circ_{\mu-\lambda}u)_{\mu}(v\circ_{\gamma}y)]
   \end{align*}
and by Eq.~\eqref{rule3}, we obtain
   \begin{align}\label{f5}
      (-1)^{\abs x(\abs u +\abs v)}[(x\circ_{\mu-\lambda}u)_{\mu}(v\circ_{\gamma}y)]=(-1)&^{\abs x(\abs u +\abs v)}{2}(x\circ_{\mu-\lambda}[u_{\lambda}(v\circ_{\gamma}y)])\nonumber\\
      &-(-1)^{\abs v \abs x}[u_{\lambda}(x\circ_{\mu-\lambda}(v\circ_{\gamma}y))].
   \end{align}
For the second term, by Eq.~\eqref{rule5}, we get
   \begin{align}\label{f6}
      (-1)^{\abs u(\abs x +\abs v)}[(v\circ_{\gamma}x)_{{\gamma+\mu-\lambda}}(u\circ_{\lambda}y)]=(-1)^{\abs u(\abs x +\abs v)}&{2}([(v\circ_{\gamma}x)_{\gamma+\mu-\lambda}u]\circ_{\gamma+\mu}y)\nonumber\\
      &+[u_{\lambda}((v\circ_{\gamma}x)\circ_{\gamma+\mu-\lambda}y)].
   \end{align}
Since
   \begin{align*}
      x\circ_{\mu-\lambda}(v\circ_{\gamma}y)&=(x\circ_{\mu-\lambda}v)\circ_{\gamma+\mu-\lambda}y\\
      &=(-1)^{\abs v \abs x}(v\circ_{-\partial-\mu+\lambda}x)\circ_{\gamma+\mu-\lambda}y
      =(-1)^{\abs v \abs x}(v\circ_{\gamma}x)\circ_{\gamma+\mu-\lambda}y,
   \end{align*}
we have 
   \begin{align*}
        [u_{\lambda}(x\circ_{\mu-\lambda}(v\circ_{\gamma}y))]=(-1)^{\abs v \abs x}[u_{\lambda}((v\circ_{\gamma}x)\circ_{\gamma+\mu-\lambda}y)].
   \end{align*}
Hence, by taking the sum of Eqs.~\eqref{f5} and \eqref{f6} and applying Eq.~\eqref{ss1}, we arrive at Eq.~\eqref{ss2}.
\end{proof}

\subsection{Tensor products of transposed Poisson conformal superalgebras}
Now we define the tensor product of TPCSAs as follows:
\begin{theorem}
Let $(\mathcal{L}_1 ,\circ^{1}_{\lambda},[\cdot_{\lambda}\cdot]^1)$ and $(\mathcal{L}_2 ,\circ^{2}_{\lambda},[\cdot_{\lambda}\cdot]^2)$ be two TPCSAs.
Set $$\mathcal L:=\mathcal L_1\otimes_{\mathbb C[\partial]} \mathcal L_2,\quad \abs{a\otimes b}=\abs a+\abs b$$ and the $\mathbb C[\partial]$-module structure on $\mathcal L$ is given by
\begin{align*}
\partial(a\otimes b):=(\partial a)\otimes b = a\otimes (\partial b),
\quad a\in\mathcal L_1,\, b\in\mathcal L_2 .
\end{align*}
Define the $\lambda$-product $\circ_\lambda$ and the $\lambda$-bracket $[\cdot_\lambda \cdot]$ on $\mathcal L$ respectively by
\begin{gather*}
(a_1\otimes a_2)\circ_\lambda (b_1\otimes b_2)
= (-1)^{\abs {a_2}\abs {b_1}}(a_1\circ^{1}_\lambda b_1)\otimes (a_2\circ^{2}_\lambda b_2),\\
[(a_1\otimes a_2)_\lambda (b_1\otimes b_2)]
=  (-1)^{\abs {a_2}\abs {b_1}}\big([{a_1}_{\lambda}b_1]^{1}\otimes (a_2\circ^{2}_\lambda b_2)+(a_1\circ^{1}_\lambda b_1)\otimes [{a_2}_{\lambda}b_2]^{2}\big),
\end{gather*}
for all $a_1,b_1 \in \mathcal{L}_1$, $a_2,b_2 \in \mathcal{L}_2$.
Then $(\mathcal L,\circ_\lambda,[\cdot_\lambda\cdot])$ is a TPCSA.
\end{theorem}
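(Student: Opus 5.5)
The proof verifies the axioms in order: well-definedness, the commutative associative conformal structure, skew-symmetry, the transposed conformal super-Leibniz rule \eqref{rule3}, and finally the Jacobi identity. Only the Jacobi identity should need more than direct algebra, and I expect it to be the main obstacle.

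\textbf{Well-definedness and the product.} First, both operations must respect the balancing relation $(\partial a_1)\otimes a_2=a_1\otimes(\partial a_2)$, and conformal sesquilinearity must hold. For the product, $(\partial a_1)\circ^1_\lambda b_1=-\lambda\, a_1\circ^1_\lambda b_1$ and $a_1\circ^1_\lambda b_1\otimes a_2\circ^2_\lambda(\partial b_2)=(\partial+\lambda)(\cdots)$ show that moving $\partial$ across the tensor sign gives the same answer. Here one uses that $\partial$ acts on $x\otimes y$ via either factor, so $(\partial+\lambda)$ on one factor equals $(\partial+\lambda)$ on the tensor. The bracket is handled the same way, term by term, since each summand is a product of one bracket and one $\circ$, both sesquilinear. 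Commutativity and associativity of $\circ_\lambda$ on $\mathcal L$ are then the standard super tensor-product computations. For commutativity, the Koszul sign $(-1)^{|a_2||b_1|}$ combines with the signs in each factor to give $(-1)^{(|a_1|+|a_2|)(|b_1|+|b_2|)}$, and $-\partial-\lambda$ is consistent on both factors because $\partial$ is diagonal. Skew-symmetry of the bracket follows in the same way from skew-symmetry of $[\cdot_\lambda\cdot]^i$ and commutativity of $\circ^i$.

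\textbf{Transposed rule.} Write $A=a_1\otimes a_2$, $B=b_1\otimes b_2$, $C=c_1\otimes c_2$ and expand both sides of \eqref{rule3}. The left side $2A\circ_\lambda[B_\mu C]$ is
\[2(a_1\circ^1_\lambda[{b_1}_\mu c_1]^1)\otimes(a_2\circ^2_\lambda(b_2\circ^2_\mu c_2))+2(a_1\circ^1_\lambda(b_1\circ^1_\mu c_1))\otimes(a_2\circ^2_\lambda[{b_2}_\mu c_2]^2),\]
up to signs. On the right side, the terms with a bracket in the first factor are $[(a_1\circ b_1)_{\lambda+\mu}c_1]\otimes((a_2\circ b_2)\circ c_2)$ and $[{b_1}_\mu(a_1\circ c_1)]\otimes(b_2\circ(a_2\circ c_2))$. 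By associativity and Remark~\ref{rem3}, the two second factors coincide with $a_2\circ_\lambda(b_2\circ_\mu c_2)$ with matching signs. Hence these terms combine via \eqref{rule3} in $\mathcal L_1$ into the first term of the left side, and symmetrically for the second factor. The only care needed is bookkeeping of the Koszul signs.

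\textbf{Jacobi identity.} Expand $[A_\lambda[B_\mu C]]$, $[[A_\lambda B]_{\lambda+\mu}C]$ and $[B_\mu[A_\lambda C]]$, each into four types of terms:
\begin{itemize}
\item[(i)] $[[\,]]\otimes(\circ\circ)$;
\item[(ii)] $(\circ\circ)\otimes[[\,]]$;
\item[(iii)] bracket-of-product $\otimes$ product-of-bracket, for example $[{a_1}_\lambda(b_1\circ c_1)]\otimes(a_2\circ[{b_2}_\mu c_2])$;
\item[(iv)] the mirror type, with the roles of the two factors exchanged.
\end{itemize}
Types (i) and (ii) cancel by Jacobi in $\mathcal L_1$ and $\mathcal L_2$, after identifying the fully associative products with Remarks~\ref{rem1}--\ref{rem3}.

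The mixed types are the hard part. My approach is to use \eqref{rule3} and \eqref{rule5} to rewrite every first-factor expression of the form ``bracket of a product'' as $2\,(\text{product with a bracket})$ minus another bracket of a product. The aim is to reduce all mixed terms to a sum of expressions (product with bracket)$\otimes$(product with bracket). The cyclic identities \eqref{eqh} and \eqref{eqw} of the previous theorem should then make each cyclic triple vanish; the term $z\circ_{-\partial-\lambda-\mu}[x_\lambda y]$ in \eqref{eqh} is exactly the shape that appears.

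If this direct grouping fails, I would fall back on \eqref{ss1} and \eqref{ss2}, which already package brackets of two products. These should allow a term such as $[(a_1\circ b_1)_{\lambda+\mu}c_1]\otimes([{a_2}_\lambda b_2]\circ c_2)$ to be paired with its partner from $[B_\mu[A_\lambda C]]$. Tracking the $\lambda$-shifts ($-\partial-\lambda$ versus $\mu$, using $x\circ_{-\partial-\lambda}u=x\circ_{\mu-\lambda}u$ inside a $\mu$-bracket) is the delicate point.
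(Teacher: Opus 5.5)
Your proposal is correct and takes essentially the paper's route. The pure terms cancel by the Jacobi identity in each factor together with associativity and commutativity, the transposed rule is checked factorwise via \eqref{rule3} (the paper uses the equivalent form \eqref{rule5}), and the mixed terms are handled by combining \eqref{rule3} and \eqref{rule5} with the cyclic identity \eqref{eqh}. The only difference is the order in the Jacobi step: the paper applies \eqref{eqh} first to the product-with-bracket factors and then \eqref{rule3}/\eqref{rule5}, whereas you rewrite first and then \eqref{eqh} kills the leftover coefficient of $[{b_1}_\mu(a_1\circ^1_\lambda c_1)]^1$ (and its mirror), after which the surviving $2(\cdots)\otimes(\cdots)$ terms cancel between your types (iii) and (iv), so the fallback via \eqref{ss1}, \eqref{ss2} is never needed.
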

\begin{proof}
Let $a_1,b_1,c_1 \in \mathcal{L}_1$, $a_2,b_2,c_2 \in \mathcal{L}_2$. It is easy to verify that $(\mathcal L,\circ_\lambda)$ is a commutative associative conformal superalgebra.
Then we show that $(\mathcal L,[\cdot_\lambda\cdot])$ is a Lie conformal superalgebra. 
\begin{itemize}
\item[(a)] {\bf{Conformal sesquilinearity}:}
\begin{align*}
&[\partial(a_1\otimes a_2)_\lambda (b_1\otimes b_2)]=[((\partial a_1)\otimes a_2)_\lambda (b_1\otimes b_2)]\\
=& (-1)^{\abs {a_2}\abs {b_1}}\big([{(\partial a_1)}_{\lambda}b_1]^{1}\otimes (a_2\circ^{2}_\lambda b_2)
+((\partial a_1)\circ^{1}_\lambda b_1)\otimes [{a_2}_{\lambda}b_2]^{2}\big)\\
=& (-1)^{\abs {a_2}\abs {b_1}}\big(-\lambda([{a_1}_{\lambda}b_1]^{1}\otimes (a_2\circ^{2}_\lambda b_2))-\lambda((a_1\circ^{1}_\lambda b_1)\otimes [{a_2}_{\lambda}b_2]^{2})\big)\\
=&-(-1)^{\abs {a_2}\abs {b_1}}\lambda \big([{a_1}_{\lambda}b_1]^{1}\otimes (a_2\circ^{2}_\lambda b_2)+(a_1\circ^{1}_\lambda b_1)\otimes [{a_2}_{\lambda}b_2]^{2}\big)\\
=&-\lambda [(a_1\otimes a_2)_\lambda (b_1\otimes b_2)].
\end{align*}
\item[(b)] {\bf{Skew-symmetry}:}
\begin{align*}
&[(b_1\otimes b_2)_{-\partial-\lambda} (a_1\otimes a_2)]\\
=&(-1)^{\abs {b_2}\abs {a_1}}\big([{b_1}_{-\partial-\lambda}a_1]^{1}\otimes (b_2\circ^{2}_{-\partial-\lambda} a_2)+(b_1\circ^{1}_{-\partial-\lambda} a_1)\otimes [{b_2}_{-\partial-\lambda}a_2]^{2}\big)\\
=&-(-1)^{\abs {a_1}\abs {b_1}+\abs {a_2}\abs {b_2}+\abs {b_2}\abs {a_1}}\big([{a_1}_{\lambda}b_1]^{1}\otimes (a_2\circ^{2}_\lambda b_2)+(a_1\circ^{1}_\lambda b_1)\otimes [{a_2}_{\lambda}b_2]^{2}\big)\\
=&-(-1)^{(\abs {a_1}+\abs {a_2})(\abs {b_1}+\abs {b_2})+\abs {a_2}\abs{b_1}}\big([{a_1}_{\lambda}b_1]^{1}\otimes (a_2\circ^{2}_\lambda b_2)+(a_1\circ^{1}_\lambda b_1)\otimes [{a_2}_{\lambda}b_2]^{2}\big)\\
=&-(-1)^{(\abs {a_1}+\abs {a_2})(\abs {b_1}+\abs {b_2})}[(a_1\otimes a_2)_\lambda (b_1\otimes b_2)].
\end{align*}
\item[(c)] {\bf{Jacobi identity}:} To verify the Jacobi identity, we set $h_0=(-1)^{|a_2||b_1|+(|a_2|+|b_2|)|c_1|}$ and compute
\begin{align*}
&[[(a_1\otimes a_2)_\lambda (b_1\otimes b_2)]_{\lambda+\mu} (c_1\otimes c_2)]-[(a_1\otimes a_2)_\lambda [(b_1\otimes b_2)_\mu (c_1\otimes c_2)]]\\
&+(-1)^{(\abs {a_1}+\abs {a_2})(\abs {b_1}+\abs {b_2})}[(b_1\otimes b_2)_\mu [(a_1\otimes a_2)_\lambda (c_1\otimes c_2)]]\\
&=(Y1)+(Y2)+(Q1)+(Q2),
\end{align*}
\end{itemize}

where
\begin{align*}
h_0(Y1)&=[{[{a_1}_{\lambda}b_1]^{1}}_{\lambda+\mu}c_1]^{1}\otimes ((a_2\circ^{2}_\lambda b_2)\circ^{2}_{\lambda+\mu} c_2)\\
&+((-1)^{\abs {a_1}\abs {b_1}}[{b_1}_{\mu}[{a_1}_{\lambda}c_1]^{1}]^{1})\otimes ((-1)^{\abs {a_2}\abs {b_2}}b_2\circ^{2}_\mu (a_2\circ^{2}_\lambda c_2))\\
&-[{a_1}_{\lambda}[{b_1}_{\mu}c_1]^{1}]^{1}\otimes (a_2\circ^{2}_\lambda (b_2\circ^{2}_\mu c_2)),\\
h_0(Y2)&=((a_1\circ^{1}_\lambda b_1)\circ^{1}_{\lambda+\mu} c_1)\otimes [{[{a_2}_{\lambda}b_2]^{2}}_{\lambda+\mu}c_2]^{2}\\
&+((-1)^{\abs {a_1}\abs {b_1}}b_1\circ^{1}_\mu (a_1\circ^{1}_\lambda c_1))\otimes ((-1)^{\abs {a_2}\abs {b_2}}[{b_2}_{\mu}[{a_2}_{\lambda}c_2]^{2}]^{2})\\
&-(a_1\circ^{1}_\lambda (b_1\circ^{1}_\mu c_1))\otimes [{a_2}_{\lambda}[{b_2}_{\mu}c_2]^{2}]^{2},\\
h_0(Q1)&=([{a_1}_{\lambda}b_1]^{1}\circ^{1}_{\lambda+\mu} c_1)\otimes [{(a_2\circ^{2}_\lambda b_2)}_{\lambda+\mu}c_2]^{2}\\
&+((-1)^{\abs {a_1}\abs {b_1}}b_1\circ^{1}_\mu [{a_1}_{\lambda}c_1]^{1})\otimes ((-1)^{\abs {a_2}\abs {b_2}}[{b_2}_{\mu}(a_2\circ^{2}_\lambda c_2)]^{2})\\
&-(a_1\circ^{1}_\lambda [{b_1}_{\mu}c_1]^{1})\otimes [{a_2}_{\lambda}(b_2\circ^{2}_\mu c_2)]^{2},\\
h_0(Q2)&=[{(a_1\circ^{1}_\lambda b_1)}_{\lambda+\mu}c_1]^{1}\otimes ([{a_2}_{\lambda}b_2]^{2}\circ^{2}_{\lambda+\mu} c_2)\\
&+((-1)^{\abs {a_1}\abs {b_1}}[{b_1}_{\mu}(a_1\circ^{1}_\lambda c_1)]^{1})\otimes ((-1)^{\abs {a_2}\abs {b_2}}b_2\circ^{2}_\mu [{a_2}_{\lambda}c_2]^{2})\\
&-[{a_1}_{\lambda}(b_1\circ^{1}_\mu c_1)]^{1}\otimes (a_2\circ^{2}_\lambda [{b_2}_{\mu}c_2]^{2}).
\end{align*}

Since $(\mathcal{L}_1 ,\circ^{1}_{\lambda})$, $(\mathcal{L}_2 ,\circ^{2}_{\lambda})$ are commutative associative conformal superalgebras and
$(\mathcal{L}_1,[\cdot_{\lambda}\cdot]^1)$, $(\mathcal{L}_2,[\cdot_{\lambda}\cdot]^2)$ are Lie conformal superalgebras, $(Y1)$ and $(Y2)$ are zero.
By Eq.~\eqref{eqh}, we obtain
\begin{align*}
&((-1)^{\abs {a_1}\abs {b_1}}b_1\circ^{1}_\mu [{a_1}_{\lambda}c_1]^{1})\otimes ((-1)^{\abs {a_2}\abs {b_2}}[{b_2}_{\mu}(a_2\circ^{2}_\lambda c_2)]^{2})\\
&=(a_1\circ^{1}_\lambda [{b_1}_{\mu}c_1]^{1}+[{a_1}_{\lambda}b_1]^{1}\circ^{1}_{\lambda+\mu}c_1)\otimes ((-1)^{\abs {a_2}\abs {b_2}}[{b_2}_{\mu}(a_2\circ^{2}_\lambda c_2)]^{2}),\\
&((-1)^{\abs {a_1}\abs {b_1}}[{b_1}_{\mu}(a_1\circ^{1}_\lambda c_1)]^{1})\otimes ((-1)^{\abs {a_2}\abs {b_2}}b_2\circ^{2}_\mu [{a_2}_{\lambda}c_2]^{2})\\
&=((-1)^{\abs {a_1}\abs {b_1}}[{b_1}_{\mu}(a_1\circ^{1}_\lambda c_1)]^{1})\otimes (a_2\circ^{2}_\lambda [{b_2}_{\mu}c_2]^{2}+[{a_2}_{\lambda}b_2]^{2}\circ^{2}_{\lambda+\mu}c_2).
\end{align*}
Hence, we get
\begin{align*}
h_0(Q1)&=(a_1\circ^{1}_\lambda [{b_1}_{\mu}c_1]^{1})\otimes ((-1)^{\abs {a_2}\abs {b_2}}[{b_2}_{\mu}(a_2\circ^{2}_\lambda c_2)]^{2}-[{a_2}_{\lambda}(b_2\circ^{2}_\mu c_2)]^{2})\\
&+([{a_1}_{\lambda}b_1]^{1}\circ^{1}_{\lambda+\mu} c_1)\otimes ([{(a_2\circ^{2}_\lambda b_2)}_{\lambda+\mu}c_2]^{2}+(-1)^{\abs {a_2}\abs {b_2}}[{b_2}_{\mu}(a_2\circ^{2}_\lambda c_2)]^{2}),\\
h_0(Q2)&=((-1)^{\abs {a_1}\abs {b_1}}[{b_1}_{\mu}(a_1\circ^{1}_\lambda c_1)]^{1}-[{a_1}_{\lambda}(b_1\circ^{1}_\mu c_1)]^{1})\otimes (a_2\circ^{2}_\lambda [{b_2}_{\mu}c_2]^{2})\\
&+([{(a_1\circ^{1}_\lambda b_1)}_{\lambda+\mu}c_1]^{1}+(-1)^{\abs {a_1}\abs {b_1}}[{b_1}_{\mu}(a_1\circ^{1}_\lambda c_1)]^{1})\otimes ([{a_2}_{\lambda}b_2]^{2}\circ^{2}_{\lambda+\mu} c_2).
\end{align*}
By Eqs.~\eqref{rule3} and \eqref{rule5}, we have
\begin{align*}
&h_0(Q1)
=-2(a_1\circ^{1}_\lambda [{b_1}_{\mu}c_1]^{1})\otimes ([{a_2}_{\lambda}b_2]^{2}\circ^2_{\lambda+\mu}c_2)
+2([{a_1}_{\lambda}b_1]^{1}\circ^{1}_{\lambda+\mu} c_1)\otimes (a_2\circ^{2}_\lambda [{b_2}_{\mu}c_2]^{2}),\\
&h_0(Q2)
=-2([{a_1}_{\lambda}b_1]^{1}\circ^1_{\lambda+\mu}c_1)\otimes (a_2\circ^{2}_\lambda [{b_2}_{\mu}c_2]^{2})
+2(a_1\circ^{1}_\lambda [{b_1}_{\mu}c_1]^{1})\otimes ([{a_2}_{\lambda}b_2]^{2}\circ^{2}_{\lambda+\mu} c_2).
\end{align*}
Hence $(Q1)+(Q2)=0$. Therefore, $(\mathcal L,[\cdot_\lambda\cdot])$ is a Lie conformal superalgebra.

Finally, we show that the relation \eqref{rule5} holds. We compute separately 
\begin{align*}
&[(a_1\otimes a_2)_\lambda ((b_1\otimes b_2)\circ_\mu (c_1\otimes c_2))]
=(-1)^{\abs {b_2}\abs{c_1}}[(a_1\otimes a_2)_\lambda ((b_1\circ^{1}_\mu c_1)\otimes (b_2\circ^{2}_\mu c_2))]\\
&=(-1)^{\abs {b_2}\abs{c_1}+\abs{a_2}(\abs{b_1}+\abs{c_1})}\big([{a_1}_{\lambda}(b_1\circ^{1}_\mu c_1)]^{1}\otimes (a_2\circ^{2}_\lambda (b_2\circ^{2}_\mu c_2))\big.\\
&\quad \big.+(a_1\circ^{1}_\lambda (b_1\circ^{1}_\mu c_1))\otimes [{a_2}_{\lambda}(b_2\circ^{2}_\mu c_2)]^{2}\big),\\
&[(b_1\otimes b_2)_\mu ((a_1\otimes a_2)\circ_\lambda (c_1\otimes c_2))]
=(-1)^{\abs {a_2}\abs{c_1}}[(b_1\otimes b_2)_\mu ((a_1\circ^{1}_\lambda c_1)\otimes (a_2\circ^{2}_\lambda c_2))]\\
&=(-1)^{\abs {a_2}\abs{c_1}+\abs{b_2}(\abs{a_1}+\abs{c_1})}\big([{b_1}_{\mu}(a_1\circ^{1}_\lambda c_1)]^{1}\otimes (b_2\circ^{2}_\mu (a_2\circ^{2}_\lambda c_2))\big.\\
&\quad \big.+(b_1\circ^{1}_\mu (a_1\circ^{1}_\lambda c_1))\otimes [{b_2}_{\mu}(a_2\circ^{2}_\lambda c_2)]^{2}\big).
\end{align*}
Then we set $s_0=(-1)^{\abs {b_2}\abs{c_1}+\abs{a_2}(\abs{b_1}+\abs{c_1})}$ and obtain 
\begin{align*}
&[(a_1\otimes a_2)_\lambda ((b_1\otimes b_2)\circ_\mu (c_1\otimes c_2))]-(-1)^{(\abs {a_1}+\abs {a_2})(\abs {b_1}+\abs {b_2})}[(b_1\otimes b_2)_\mu ((a_1\otimes a_2)\circ_\lambda (c_1\otimes c_2))]\\
&=s_0([{a_1}_{\lambda}(b_1\circ^{1}_\mu c_1)]^{1}-(-1)^{\abs {a_1}\abs {b_1}}[{b_1}_{\mu}(a_1\circ^{1}_\lambda c_1)]^{1})\otimes ((a_2\circ^{2}_\lambda b_2)\circ^{2}_{\lambda+\mu} c_2)\\
&\hphantom{=~}+s_0((a_1\circ^{1}_\lambda b_1)\circ^{1}_{\lambda+\mu} c_1)\otimes ([{a_2}_{\lambda}(b_2\circ^{2}_\mu c_2)]^{2}-(-1)^{\abs {a_2}\abs {b_2}}[{b_2}_{\mu}(a_2\circ^{2}_\lambda c_2)]^{2})\\
&=2s_0([{a_1}_{\lambda}b_1]^{1}\circ^1_{\lambda+\mu}c_1)\otimes ((a_2\circ^{2}_\lambda b_2)\circ^{2}_{\lambda+\mu} c_2)
+2s_0((a_1\circ^{1}_\lambda b_1)\circ^{1}_{\lambda+\mu} c_1)\otimes ([{a_2}_{\lambda}b_2]^{2}\circ^2_{\lambda+\mu}c_2)\\
&=(-1)^{\abs {a_2}\abs{b_1}}2\big(([{a_1}_{\lambda}b_1]^{1}\otimes (a_2\circ^{2}_\lambda b_2))\circ_{\lambda+\mu} (c_1\otimes c_2)
+((a_1\circ^{1}_\lambda b_1)\otimes [{a_2}_{\lambda}b_2]^{2})\circ_{\lambda+\mu} (c_1\otimes c_2)\big)\\
&=(-1)^{\abs {a_2}\abs{b_1}}2([{a_1}_{\lambda}b_1]^{1}\otimes (a_2\circ^{2}_\lambda b_2)+(a_1\circ^{1}_\lambda b_1)\otimes [{a_2}_{\lambda}b_2]^{2})\circ_{\lambda+\mu} (c_1\otimes c_2)\\
&=2[(a_1\otimes a_2)_\lambda (b_1\otimes b_2)]\circ_{\lambda+\mu} (c_1\otimes c_2).
\end{align*}

Altogether, $(\mathcal L,\circ_\lambda,[\cdot_\lambda\cdot])$ is a transposed Poisson conformal superalgebra.
\end{proof}

\subsection{Relation with Hom-Lie conformal superalgebras}

\begin{proposition}
   Let $(\mathcal{L} ,\circ_\lambda,[\cdot_{\lambda}\cdot])$ be a TPCSA. For any  $h\in \mathcal{L}_{\bar{0}}$, define a linear map
   $\alpha_h$ : $\mathcal{L}  \rightarrow \mathcal{L}$ by
   \begin{align*}
      \alpha_h(x):=(h\circ_\lambda x)|_{\lambda=0}=h\circ_{(0)} x, \quad \forall ~x\in\mathcal{L}.
   \end{align*}
   Then $(\mathcal{L} ,[\cdot_{\lambda}\cdot],\alpha_h)$ is a Hom-Lie conformal superalgebra.
\end{proposition}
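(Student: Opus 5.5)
Since the bracket is unchanged, conformal sesquilinearity and skew-symmetry hold automatically, so the whole task is to verify the properties of $\alpha_h$ and the Hom-Jacobi identity.

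First, $\alpha_h$ is even because $h\in\mathcal L_{\bar 0}$ and $\circ_\lambda$ respects parity. For $\alpha_h\partial=\partial\alpha_h$, use sesquilinearity: $h\circ_\lambda(\partial x)=(\partial+\lambda)(h\circ_\lambda x)$. Setting $\lambda=0$ gives $\alpha_h(\partial x)=\partial\alpha_h(x)$.

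For the Hom-Jacobi identity, the natural tool is identity \eqref{equ}. Specialize it to $\lambda=0$, with $h$ even so that no extra signs from $h$ appear. The term $h\circ_\lambda x$ becomes $\alpha_h(x)$, but the parameters become shifted, so the identity must first be rewritten in a convenient form:
\[
(-1)^{|x||z|}[\alpha_h(x)_{\gamma}[y_{\mu-\gamma}z]]+(-1)^{|x||y|}[\alpha_h(y)_{\mu-\gamma}[z_{-\partial-\gamma}x]]+(-1)^{|y||z|}[\alpha_h(z)_{-\partial-\mu}[x_\gamma y]]=0.
\]
Writing $\gamma=\lambda'$ and $\mu-\gamma=\mu'$, the three terms will be converted into the three terms of the Hom-Jacobi identity for $(x,y,z)$ with parameters $\lambda',\mu'$.

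\begin{enumerate}
\item The first term is already $(-1)^{|x||z|}[\alpha_h(x)_{\lambda'}[y_{\mu'}z]]$.
\item For the third term, apply skew-symmetry: $[\alpha_h(z)_{-\partial-\lambda'-\mu'}[x_{\lambda'}y]]=-(-1)^{|z|(|x|+|y|)}[[x_{\lambda'}y]_{\lambda'+\mu'}\alpha_h(z)]$.
\item For the second term, rewrite $[z_{-\partial-\lambda'}x]=-(-1)^{|x||z|}[x_{\lambda'}z]$.
\end{enumerate}

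After multiplying through by $(-1)^{|x||z|}$, this gives
\[
[\alpha_h(x)_{\lambda'}[y_{\mu'}z]]-[[x_{\lambda'}y]_{\lambda'+\mu'}\alpha_h(z)]-(-1)^{|x||y|}[\alpha_h(y)_{\mu'}[x_{\lambda'}z]]=0,
\]
which is precisely the Hom-Jacobi identity.

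The main obstacle is careful bookkeeping. The substitution $\lambda=0$ must be legitimate inside expressions where $\partial$ appears in the subscripts, and the polynomial identity in $\lambda,\gamma,\mu$ does specialize. One must also check that applying skew-symmetry with $-\partial$ acting on the whole bracket matches the subscripts exactly, and track the sign exponents. Evenness of $h$ is what kills the $|h|$ contributions, and commutativity of $\alpha_h$ with $\partial$ is what lets $-\partial$ pass correctly through $\alpha_h$ in the skew-symmetry step.

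As a fallback, one could derive the identity directly from \eqref{rule3} together with the Jacobi identity, mimicking the derivation of \eqref{equ} at $\lambda=0$.
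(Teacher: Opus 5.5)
Your proposal is correct and is essentially the paper's proof: the paper likewise sets $\lambda=0$ in \eqref{equ} with $\gamma\mapsto\lambda$, $\mu\mapsto\lambda+\mu$ and reads off the Hom-Jacobi identity, leaving implicit the skew-symmetry rewrites and sign bookkeeping that you carry out explicitly and correctly. One minor point: $\alpha_h\partial=\partial\alpha_h$ is needed only because the definition of a Hom-Lie conformal superalgebra requires it, not in the skew-symmetry step, since there you apply skew-symmetry directly to the element $\alpha_h(z)$.
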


\begin{proof}
   Since $h$ is even, $\alpha_h$ is an even linear map.  Moreover,
   the axioms of conformal sesquilinearity and skew-symmetry are naturally satisfied since
$(\mathcal{L},[\cdot_{\lambda}\cdot])$ is a Lie conformal superalgebra. For all $x\in\mathcal{L}$,
   \begin{align*}
      \alpha_h (\partial x)=(h\circ_\lambda \partial x)|_{\lambda=0}=(\partial+\lambda)(h\circ_\lambda x)|_{\lambda=0}=\partial(h\circ_{(0)} x)=\partial (\alpha_hx).
   \end{align*}
Hence, $\alpha_h\partial=\partial\alpha_h$. By taking $\gamma:=\lambda, ~\mu:=\lambda+\mu,~\lambda:=0$ in Eq.~\eqref{equ}, we obtain
   \begin{align*}
      [[x_{\lambda}y]_{\lambda+\mu}(h\circ_{(0)}z)]
      +(-1)^{\abs {x}\abs{y}}[(h\circ_{(0)}y)_{\mu}[x_{\lambda}z]]=
      [(h\circ_{(0)}x)_{\lambda}[y_{\mu}z]].
   \end{align*}
 This is equivalent to
   \begin{align*}
        [[x_{\lambda}y]_{\lambda+\mu}\alpha_h(z)]+(-1)^{\abs {x}\abs{y}}[\alpha_h(y)_{\mu}[x_{\lambda}z]]=[\alpha_h(x)_{\lambda}[y_{\mu}z]],
   \end{align*}
which is exactly the Hom-Jacobi identity. Therefore, $(\mathcal{L} ,[\cdot_{\lambda}\cdot],\alpha_h)$ is a Hom-Lie conformal superalgebra.
\end{proof}

\subsection{Compatibility conditions between PCSAs and TPCSAs}

\begin{proposition}
   Let $(\mathcal{L} ,\circ_\lambda)$ be a commutative associative conformal superalgebra and $(\mathcal{L} ,[\cdot_{\lambda}\cdot])$ be a Lie conformal superalgebra.
   Then $(\mathcal{L} ,\circ_\lambda,[\cdot_{\lambda}\cdot])$ is both a Poisson conformal superalgebra and a transposed Poisson conformal superalgebra if and only if
   \begin{align*}
      a\circ_{\lambda}[b_{\mu}c]=[b_{\mu-\lambda}a]\circ_{\mu}c=[(a\circ_{\lambda}b)_{\lambda+\mu}c]=0, \quad \forall ~a,b,c\in \mathcal{L}.
   \end{align*}
\end{proposition}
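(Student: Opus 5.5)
We prove the two implications separately; the converse is immediate and the forward direction is where the work is.

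\emph{Converse.} Suppose $a\circ_{\lambda}[b_{\mu}c]=[b_{\mu-\lambda}a]\circ_{\mu}c=[(a\circ_{\lambda}b)_{\lambda+\mu}c]=0$ for all $a,b,c$. In \eqref{rule3}, the term $[b_{\mu}(a\circ_{\lambda}c)]$ is, by skew-symmetry, a signed multiple of $[(a\circ_\lambda c)_{-\partial-\mu}b]$, which vanishes. The other two terms of \eqref{rule3} vanish by hypothesis, so \eqref{rule3} holds trivially. In \eqref{rule1}, the left side $[a_\lambda(b\circ_\mu c)]$ vanishes for the same reason. On the right side, $[a_\lambda b]\circ_{\lambda+\mu}c$ vanishes by the second condition after renaming variables, and $b\circ_\mu[a_\lambda c]$ vanishes by the first. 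Hence $\mathcal L$ is both a PCSA and a TPCSA.

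\emph{Forward direction.} Assume both \eqref{rule1} (equivalently \eqref{rule6}) and \eqref{rule3} hold. We reduce \eqref{rule3} using the Poisson rule twice.

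First, \eqref{rule6} with $\mu$ replaced by $\lambda+\mu$ gives
\[
[(a\circ_\lambda b)_{\lambda+\mu}c]=a\circ_\lambda[b_\mu c]+(-1)^{\abs a\abs b}b\circ_\mu[a_\lambda c].
\]
Second, \eqref{rule1} applied to the pair $b$, $a\circ_\lambda c$ gives
\[
[b_\mu(a\circ_\lambda c)]=[b_\mu a]\circ_{\lambda+\mu}c+(-1)^{\abs a\abs b}a\circ_\lambda[b_\mu c].
\]
Substituting both into \eqref{rule3}, the two copies of $a\circ_\lambda[b_\mu c]$ cancel the left-hand side $2a\circ_\lambda[b_\mu c]$. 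What remains is the key relation
\begin{equation*}
b\circ_\mu[a_\lambda c]=-[b_\mu a]\circ_{\lambda+\mu}c,\qquad\forall~a,b,c\in\mathcal L. \tag{$\ast$}
\end{equation*}

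Next we show that $T(x,y,z):=x\circ_\lambda[y_\mu z]$ vanishes. Rewrite the right side of $(\ast)$ with supercommutativity of $\circ_\lambda$ and then skew-symmetry of the bracket. This expresses $T(x,y,z)$ as a signed term $z\circ_{-\partial-\lambda-\mu}[x_\lambda y]$, i.e. as a cyclic shift with shifted spectral parameters. Applying this twice, all three summands of \eqref{eqh} (from the theorem above) become equal, up to the indicated signs, to a single term. Then \eqref{eqh} reads $3\,(-1)^{\abs x\abs z}x\circ_\lambda[y_\mu z]=0$, so $a\circ_\lambda[b_\mu c]=0$.

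With this in hand, $(\ast)$ gives $[b_\mu a]\circ_{\lambda+\mu}c=0$. Setting $\lambda+\mu\mapsto\mu$ yields $[b_{\mu-\lambda}a]\circ_\mu c=0$. Finally, the displayed form of \eqref{rule6} gives $[(a\circ_\lambda b)_{\lambda+\mu}c]=0$, since both terms on its right side now vanish.

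The main obstacle is the sign and spectral-parameter bookkeeping in the cyclic step. One must check that, after commutativity and skew-symmetry, the parameters land exactly as $\lambda$, $\mu$, $-\partial-\lambda-\mu$ in \eqref{eqh}. Here the substitution $\partial\mapsto$ ``$\partial+$ outer parameter'' inside $z\circ_\nu[\cdot]$ needs care. One must also check that the parity signs make the three terms add rather than cancel.

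If the signs turn out to produce $T=-T'$ rather than $T=T'$, I would instead use \eqref{eqw} together with $(\ast)$. Alternatively, I would apply $(\ast)$ a second time with the roles of $a$ and $c$ exchanged via the Jacobi identity, to isolate $a\circ_\lambda[b_\mu c]$ directly.
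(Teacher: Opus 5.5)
Your strategy is sound. The derivation of $(\ast)$ from \eqref{rule3}, \eqref{rule1} and \eqref{rule6} is correct, and so are the converse and your last two steps. The cyclic step, however, does not come out as you describe. The three summands of \eqref{eqh} do not all become equal, and \eqref{eqh} does not turn into $3(-1)^{\abs x\abs z}x\circ_\lambda[y_\mu z]=0$. Apply supercommutativity, and then $(\ast)$ with $(b,a,c)=(x,y,z)$ and the two parameters interchanged:
\[
(-1)^{\abs y\abs z}z\circ_{-\partial-\lambda-\mu}[x_\lambda y]=(-1)^{\abs x\abs z}[x_\lambda y]\circ_{\lambda+\mu}z=-(-1)^{\abs x\abs z}x\circ_\lambda[y_\mu z].
\]
So the third summand is the negative of the first. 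This is exactly the ``$T=-T'$'' case you worried about. It is harmless and actually shortens the argument. Equation \eqref{eqh} collapses to its middle summand, $(-1)^{\abs x\abs y}y\circ_\mu[z_{-\partial-\lambda}x]=0$. Skew-symmetry, $[z_{-\partial-\lambda}x]=-(-1)^{\abs x\abs z}[x_\lambda z]$, then gives $y\circ_\mu[x_\lambda z]=0$ for all $x,y,z$. Put differently, \eqref{eqh} is equivalent to $x\circ_\lambda[y_\mu z]+[x_\lambda y]\circ_{\lambda+\mu}z=(-1)^{\abs x\abs y}y\circ_\mu[x_\lambda z]$, and $(\ast)$ says precisely that the left-hand side vanishes. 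With this correction your proof is complete.

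Compared with the paper, you use the same ingredients in a more economical order. The paper does not start from $(\ast)$. It proceeds as follows:
\begin{enumerate}
\item It expands \eqref{rule5} using \eqref{rule1} twice to get a four-term relation.
\item It subtracts \eqref{eqw} to obtain $[b_{\mu-\lambda}a]\circ_\mu c=0$.
\item It then uses \eqref{eqh} to get $a\circ_\lambda[b_\mu c]=(-1)^{\abs a\abs b}b\circ_\mu[a_\lambda c]$.
\item Only at the end does it combine \eqref{rule3} with \eqref{rule1} and \eqref{rule6}, which is exactly your $(\ast)$, to conclude $a\circ_\lambda[b_\mu c]=-(-1)^{\abs a\abs b}[b_\mu a]\circ_{\lambda+\mu}c=0$.
\end{enumerate}
Your route invokes the transposed rule and the cyclic identity once each and skips the preliminary computation with \eqref{rule5}. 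The paper's ordering obtains the vanishing of $[b_{\mu-\lambda}a]\circ_\mu c$ first, independently of $(\ast)$, at the cost of an extra round of substitutions.
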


\begin{proof}
   The ``if" part can be easily verified. For the ``only if" part, let $a,b,c\in \mathcal{L}$. By Eq.~\eqref{rule5}, we have
   \begin{align*}
      2[a_{\lambda}b]\circ_{\mu}c=[a_{{\lambda}}(b\circ_{\mu-\lambda}c)]-(-1)^{\abs {a}\abs{b}}[b_{\mu-\lambda}(a\circ_{\lambda}c)].
   \end{align*}
   By Eq.~\eqref{rule1}, we obtain
   \begin{align*}
      &[a_{\lambda}(b\circ_{\mu-\lambda}c)]=[a_{\lambda}b]\circ_{\mu}c+(-1)^{\abs {a}\abs{b}}b\circ_{\mu-\lambda}[a_{\lambda}c],  \\
      &[b_{\mu-\lambda}(a\circ_{\lambda}c)]=[b_{\mu-\lambda}a]\circ_{\mu}c+(-1)^{\abs {a}\abs{b}}a\circ_{\lambda}[b_{\mu-\lambda}c].
   \end{align*}
   Thus, we have
   \begin{align*}
      [a_{\lambda}b]\circ_{\mu}c+a\circ_{\lambda}[b_{\mu-\lambda}c]-(-1)^{\abs {a}\abs{b}}b\circ_{\mu-\lambda}[a_{\lambda}c]+(-1)^{\abs {a}\abs{b}}[b_{\mu-\lambda}a]\circ_{\mu}c=0.
   \end{align*}
   By Eq.~\eqref{eqw}, we get
   \begin{align*}
      [a_{\lambda}b]\circ_{\mu}c+a\circ_{\lambda}[b_{\mu-\lambda}c]-(-1)^{\abs {a}\abs{b}}b\circ_{\mu-\lambda}[a_{\lambda}c]=0.
   \end{align*}
   Hence, $[b_{\mu-\lambda}a]\circ_{\mu}c=0$. From this and Eq.~\eqref{eqh}, we get $a\circ_{\lambda}[b_{\mu}c]=(-1)^{\abs {a}\abs{b}}b \circ_{\mu} [a _\lambda c]$. By Eq.~\eqref{rule3}, we have
   \begin{align*}
      2a\circ_{\lambda}[b_{\mu}c]=[(a\circ_{\lambda}b)_{\lambda+\mu}c]+(-1)^{\abs {a}\abs{b}}[b_{\mu}(a\circ_{\lambda}c)].	
   \end{align*}
   By Eqs.~\eqref{rule1} and \eqref{rule6}, we obtain
   \begin{align*}
      &[(a\circ_{\lambda}b)_{\lambda+\mu}c] = (-1)^{\abs {a}\abs{b}}b \circ_{\mu} [a _\lambda c] + a \circ_\lambda [b _{\mu} c],  \\
      &[b_{\mu}(a\circ_{\lambda}c)] = [b_{\mu}a]\circ_{\lambda+\mu}c+(-1)^{\abs {a}\abs{b}}a\circ_{\lambda}[b_{\mu}c].
   \end{align*}
   Thus, $a\circ_{\lambda}[b_{\mu}c]=-(-1)^{\abs {a}\abs{b}}[b_{\mu}a]\circ_{\lambda+\mu}c=0$, $[(a\circ_{\lambda}b)_{\lambda+\mu}c]=2a\circ_{\lambda}[b_{\mu}c]=0$.
\end{proof}

\section{Constructions of transposed Poisson conformal superalgebras}\label{sec4}
In this section, we present several constructions of transposed Poisson conformal superalgebras (TPCSAs). We first show how to obtain a new TPCSA from a given one by introducing a suitable binary operation. We then introduce Novikov-Poisson conformal superalgebras, pre-Lie commutative conformal superalgebras, differential Novikov-Poisson conformal superalgebras, and pre-Lie Poisson conformal superalgebras, which are the super counterparts of the corresponding conformal algebras considered in \cite{YF2026}. Finally, we construct several classes of TPCSAs from these structures.

\begin{proposition}
   Let $(\mathcal{L} ,\circ_\lambda,[\cdot_{\lambda}\cdot])$ be a TPCSA. For any  $h\in \mathcal{L}_{\bar{0}}$, define a new binary operation $[\cdot_{\lambda}\cdot]^h$ on $\mathcal{L}$ by
   \begin{align*}
      [x_{\lambda}y]^h:=(h\circ_{\mu}[x_{\lambda}y])|_{\mu=0}=h\circ_{(0)}[x_{\lambda}y], \quad \forall ~x,y\in \mathcal{L}.
   \end{align*}
   Then $(\mathcal{L} ,\circ_\lambda, [\cdot_{\lambda}\cdot]^h)$ is a TPCSA.
\end{proposition}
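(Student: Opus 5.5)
The approach is to write the new bracket as $[x_\lambda y]^h=\alpha_h([x_\lambda y])$, where $\alpha_h=h\circ_{(0)}$ is the even, $\partial$-commuting map from the preceding proposition on Hom-Lie conformal superalgebras. We then check in turn that $[\cdot_\lambda\cdot]^h$ is a Lie conformal superalgebra bracket and that the transposed rule \eqref{rule3} still holds with the unchanged product $\circ_\lambda$. Throughout, we use that $h$ is even, so no extra signs arise from moving $h$ past other elements.

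\emph{Sesquilinearity and skew-symmetry.} We already know $\alpha_h\partial=\partial\alpha_h$. Hence
\[
[(\partial x)_\lambda y]^h=\alpha_h(-\lambda[x_\lambda y])=-\lambda[x_\lambda y]^h,\qquad [x_\lambda(\partial y)]^h=\alpha_h\big((\partial+\lambda)[x_\lambda y]\big)=(\partial+\lambda)[x_\lambda y]^h.
\]
For skew-symmetry, $\alpha_h$ is $\mathbb C[\partial]$-linear. Applying it to $[y_{-\lambda-\partial}x]$, which is a polynomial in $\partial$ with coefficients in $\mathcal L$, commutes with the substitution $\lambda\mapsto-\lambda-\partial$. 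So skew-symmetry transfers directly from $[\cdot_\lambda\cdot]$.

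\emph{Transposed rule.} By Remark~\ref{rem3} with $\mu=0$ and $|h|=\bar0$ we have $a\circ_\lambda(h\circ_{(0)}w)=h\circ_{(0)}(a\circ_\lambda w)$, that is, $\alpha_h$ commutes with every left multiplication $a\circ_\lambda$. Therefore
\[
2a\circ_\lambda[b_\mu c]^h=\alpha_h\big(2a\circ_\lambda[b_\mu c]\big)=\alpha_h\big([(a\circ_\lambda b)_{\lambda+\mu}c]+(-1)^{|a||b|}[b_\mu(a\circ_\lambda c)]\big),
\]
by \eqref{rule3}, and the right-hand side equals $[(a\circ_\lambda b)_{\lambda+\mu}c]^h+(-1)^{|a||b|}[b_\mu(a\circ_\lambda c)]^h$. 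The product $\circ_\lambda$ is unchanged, so it remains commutative and associative.

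\emph{Jacobi identity (main obstacle).} The Jacobi identity for $[\cdot_\lambda\cdot]^h$, written in cyclic form, is
\[
\sum_{\mathrm{cyc}}(\pm)\big[[x_\gamma y]^h{}_{\mu}z\big]^h=\alpha_h\Big(\sum_{\mathrm{cyc}}(\pm)\big[(h\circ_{(0)}[x_\gamma y])_{\mu}z\big]\Big).
\]
The inner cyclic sum is exactly identity \eqref{eqs} specialized at $\lambda=0$. That identity reads
\[
(-1)^{|x||z|}[(h\circ_{(0)}[x_\gamma y])_{\mu}z]+(-1)^{|x||y|}[(h\circ_{(0)}[y_{\mu-\gamma}z])_{-\partial-\gamma}x]+(-1)^{|y||z|}[(h\circ_{(0)}[z_{-\partial-\gamma}x])_{-\partial-\mu+\gamma}y]=0 .
\]
Applying $\alpha_h$ to it gives the cyclic Jacobi identity for $[\cdot_\lambda\cdot]^h$. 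The delicate step is converting this cyclic form back into the standard form \eqref{Jacobi}, $[x_\lambda[y_\mu z]^h]^h=\dots$. To do this I would use the already-established skew-symmetry of $[\cdot_\lambda\cdot]^h$ to rewrite, for example,
\[
[[y_{\mu-\gamma}z]^h{}_{-\partial-\gamma}x]^h=-(-1)^{|x|(|y|+|z|)}[x_\gamma[y_{\mu-\gamma}z]^h]^h,
\]
and then rename $\gamma=\lambda$ and $\mu\mapsto\lambda+\mu$. The only real care needed is tracking the $\partial$-substitutions through $\alpha_h$ (legitimate since $\alpha_h$ is $\mathbb C[\partial]$-linear) and checking that the parity signs match. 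This is the same mechanism by which \eqref{equ} yielded the Hom-Jacobi identity in the previous proposition.
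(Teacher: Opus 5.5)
Your proposal is correct and follows essentially the same route as the paper. Both transfer sesquilinearity and skew-symmetry through the even $\mathbb C[\partial]$-linear map $\alpha_h=h\circ_{(0)}$, get the Jacobi identity by specializing \eqref{eqs} at $\lambda=0$ and applying $h\circ_{(0)}$, and obtain the compatibility rule by pulling $h\circ_{(0)}$ outside. The paper differs only cosmetically: it checks the rule in the equivalent form \eqref{rule5} via associativity $(h\circ_{(0)}w)\circ_\mu z=h\circ_{(0)}(w\circ_\mu z)$ rather than \eqref{rule3} with Remark~\ref{rem3}, and it puts the specialized \eqref{eqs} into standard Jacobi form before applying $h\circ_{(0)}$ rather than after.
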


\begin{proof}
Let $x,y,z\in \mathcal{L},h\in \mathcal{L}_{\bar{0}}$.
Since $h$ is even, the operation $[\cdot_\lambda\cdot]^h$ is even.
We first show that $(\mathcal{L}, [\cdot_{\lambda}\cdot]^h)$ is a Lie conformal superalgebra.

   (a) {\bf Conformal sesquilinearity:}
   \begin{align*}
      [(\partial x)_{\lambda}y]^h=h\circ_{(0)}[(\partial x)_{\lambda}y]=h\circ_{(0)}(-\lambda[x_{\lambda}y])=-\lambda (h\circ_{(0)}[x_{\lambda}y])=-\lambda [x_{\lambda}y]^h.
   \end{align*}

   (b) {\bf Skew-symmetry:}
   \begin{align*}
      [x_{\lambda}y]^h&=h\circ_{(0)}[x_{\lambda}y]=h\circ_{(0)}(-(-1)^{\abs {x}\abs{y}}[y_{-\partial-\lambda}x])\\
      &=-(-1)^{\abs {x}\abs{y}}h\circ_{(0)}[y_{-\partial-\lambda}x]=-(-1)^{\abs {x}\abs{y}}[y_{-\partial-\lambda}x]^h.
   \end{align*}

   (c) {\bf Jacobi identity:} By taking $\gamma:=\lambda, ~\mu:=\lambda+\mu,~\lambda:=0$ in Eq.~\eqref{eqs}, we obtain
   \begin{align*}
      &[(h\circ_{(0)}[x_{\lambda}y])_{\lambda+\mu}z]
      +(-1)^{\abs {x}\abs{y}}[y_{\mu}(h\circ_{(0)}[x_{\lambda}z])]
      -[x_{\lambda}(h\circ_{(0)}[y_{\mu}z])]=0
   \end{align*}
   Thus, we have
   $$h\circ_{(0)}([(h\circ_{(0)}[x_{\lambda}y])_{\lambda+\mu}z]
      +(-1)^{\abs {x}\abs{y}}[y_{\mu}(h\circ_{(0)}[x_{\lambda}z])]
      -[x_{\lambda}(h\circ_{(0)}[y_{\mu}z])])=0.$$
    That is,
   \begin{align*}
      [{[x_{\lambda}y]^h}_{\lambda+\mu}z]^h+(-1)^{\abs {x}\abs{y}}[y_{\mu}{[x_{\lambda}z]^h}]^h-[x_{\lambda}{[y_{\mu}z]^h}]^h=0.
   \end{align*}

   Next we show that transposed conformal super-Leibniz rule holds. By Eq.~\eqref{rule5}, we obtain
     \begin{align*}
      & 2([x_{\lambda}y]^h\circ_{\mu}z)=2((h\circ_{(0)}[x_{\lambda}y])\circ_{\mu}z)
      =h\circ_{(0)}(2([x_{\lambda}y]\circ_{\mu}z))\\
      &= h\circ_{(0)}([x_{{\lambda}}(y\circ_{\mu-\lambda}z)]-(-1)^{\abs {x}\abs{y}}[y_{\mu-\lambda}(x\circ_{\lambda}z)])\\
      &=h\circ_{(0)}[x_{{\lambda}}(y\circ_{\mu-\lambda}z)]-(-1)^{\abs {x}\abs{y}}h\circ_{(0)}[y_{\mu-\lambda}(x\circ_{\lambda}z)]  \\
      &= [x_{{\lambda}}(y\circ_{\mu-\lambda}z)]^h-(-1)^{\abs {x}\abs{y}}[y_{\mu-\lambda}(x\circ_{\lambda}z)]^h.
     \end{align*}
Therefore, $(\mathcal{L} ,\circ_\lambda, [\cdot_{\lambda}\cdot]^h)$ is a transposed Poisson conformal superalgebra.
\end{proof}

\begin{definition}
   A {\textbf{Novikov-Poisson conformal superalgebra}} $\mathcal{N}=\mathcal{N}_{\bar{0}}\oplus \mathcal{N}_{\bar{1}}$ is a $\Ztwo$-graded $\mathbb{C}[\partial]$-module endowed with two $\mathbb{C}$-bilinear maps $\mathcal{N}  \otimes \mathcal{N}  \rightarrow \mathcal{N} [\lambda]$, denoted by $a\otimes b\mapsto a \circ_{\lambda}b$ and $a\otimes b\mapsto a\ast_\lambda b$, respectively,
   such that $(\mathcal{N} ,\circ_\lambda)$ is a {commutative associative conformal superalgebra}, $(\mathcal{N} ,\ast_\lambda)$ is a {Novikov conformal superalgebra} and the following equations hold for all $a,b,c \in \mathcal{N}$:
   \begin{align}
      &(a\circ_\lambda b)\ast_{\lambda+\mu}c=a\circ_\lambda(b\ast_\mu c), \label{f7}\\
      &(a\ast_\lambda b)\circ_{\lambda+\mu}c-a\ast_\lambda(b\circ_\mu c)=(-1)^{\abs a\abs b}((b\ast_\mu a)\circ_{\lambda+\mu}c-b\ast_\mu(a\circ_\lambda c)). \label{h8}
   \end{align}
\end{definition}

\begin{remark}\label{rem2}
   In analogy with Remark \ref{rem1}, the following properties always hold in a Novikov-Poisson conformal superalgebra $(\mathcal{N},\circ_{\lambda},\ast_{\lambda})$:
   \begin{align*}
         a\circ_{\lambda}(b\ast_{-\partial-\mu}c) & =(a\circ_{\lambda}b)\ast_{-\partial-\mu}c, \\
         a\circ_{-\partial-\lambda}(b\ast_{\mu}c) & =(a\circ_{-\partial-\mu}b)\ast_{-\partial+\mu-\lambda}c, \\
         a\circ_{-\partial-\lambda}(b\ast_{-\partial-\mu}c) & =(a\circ_{-\partial+\mu-\lambda}b)\ast_{-\partial-\mu}c.
   \end{align*}
\end{remark}

Taking the commutator in a Novikov-Poisson conformal superalgebra, we obtain
\begin{theorem}\label{The1.2}
   Let $(\mathcal{N} ,\circ_\lambda,\ast_\lambda)$ be a Novikov-Poisson conformal superalgebra. Define
   \begin{align}
      [a_{\lambda}b]=a\ast_{\lambda} b-(-1)^{\abs a\abs b}(b\ast_{-\partial-\lambda} a), \quad \forall ~a,b \in \mathcal{N}.
   \end{align}
Then $(\mathcal{N} ,\circ_\lambda,[\cdot_\lambda\cdot])$ is a TPCSA.
\end{theorem}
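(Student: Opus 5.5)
By Proposition~\ref{propo1}, every Novikov conformal superalgebra is left-symmetric, so $(\mathcal N,[\cdot_\lambda\cdot])$ is already a Lie conformal superalgebra. Since $(\mathcal N,\circ_\lambda)$ is commutative associative by assumption, only the transposed conformal super-Leibniz rule~\eqref{rule3} remains to be checked. We will verify it in the form
\[
2a\circ_\lambda[b_\mu c]=[(a\circ_\lambda b)_{\lambda+\mu}c]+(-1)^{|a||b|}[b_\mu(a\circ_\lambda c)].
\]

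First we expand the three brackets via the commutator definition. This produces six terms:
\begin{align*}
&a\circ_\lambda(b\ast_\mu c),\qquad a\circ_\lambda(c\ast_{-\partial-\mu}b),\qquad (a\circ_\lambda b)\ast_{\lambda+\mu}c,\\
&c\ast_{-\partial-\lambda-\mu}(a\circ_\lambda b),\qquad b\ast_\mu(a\circ_\lambda c),\qquad (a\circ_\lambda c)\ast_{-\partial-\mu}b,
\end{align*}
each with the appropriate sign.

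Next we pair up terms using \eqref{f7} and its variants in Remark~\ref{rem2}.
\begin{itemize}
\item By \eqref{f7}, $(a\circ_\lambda b)\ast_{\lambda+\mu}c=a\circ_\lambda(b\ast_\mu c)$.
\item By the first identity of Remark~\ref{rem2} together with commutativity of $\circ$, $(a\circ_\lambda c)\ast_{-\partial-\mu}b=a\circ_\lambda(c\ast_{-\partial-\mu}b)$.
\end{itemize}
After these substitutions the rule reduces to an identity involving only $b\ast_\mu(a\circ_\lambda c)$ and $c\ast_{-\partial-\lambda-\mu}(a\circ_\lambda b)$. These are the terms where $\ast$ acts from the left on a $\circ$-product. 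Writing the signs out, the required identity should become
\[
a\circ_\lambda(b\ast_\mu c)-(-1)^{|a||b|}\,b\ast_\mu(a\circ_\lambda c)
=(-1)^{|b||c|}\Big(a\circ_\lambda(c\ast_{-\partial-\mu}b)-(-1)^{|a||c|}\,c\ast_{-\partial-\lambda-\mu}(a\circ_\lambda b)\Big)
\]
or an equivalent rearrangement.

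To handle these terms we use \eqref{h8}. Combined with \eqref{f7}, it lets us rewrite $b\ast_\mu(a\circ_\lambda c)$ as
\[
(-1)^{|a||b|}\Big(a\ast_\lambda(b\circ_\mu c)-(a\ast_\lambda b)\circ_{\lambda+\mu}c\Big)+(b\ast_\mu a)\circ_{\lambda+\mu}c .
\]
The terms $(x\ast y)\circ z$ can in turn be converted to $x\circ(y\ast z)$-type expressions using commutativity of $\circ$ and \eqref{f7}. For example,
\[
(b\ast_\mu a)\circ_{\lambda+\mu}c=\pm\, c\circ_{-\partial-\lambda-\mu}(b\ast_\mu a),
\]
and the right-hand side equals $\pm(c\circ_{-\partial-\lambda}b)\ast\cdots$ by the second identity of Remark~\ref{rem2}. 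Doing the same for the $c$-term and then applying the Novikov right-commutativity identity
\[
(x\ast_\lambda y)\ast_{\lambda+\mu}z=(-1)^{|y||z|}(x\ast_\lambda z)\ast_{-\mu-\partial}y
\]
should make everything cancel. This mirrors the even-case argument of \cite{YF2026}, now carrying Koszul signs.

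The main obstacle is this final cancellation: matching spectral parameters such as $-\partial-\lambda-\mu$ correctly under sesquilinearity, and tracking every sign $(-1)^{|\cdot||\cdot|}$. To keep this manageable, I would first work out the purely even case, where the identity reduces to the known TPCA computation. I would then insert Koszul signs mechanically, assigning each transposition of letters its sign and checking that each cancelling pair carries the same parity product.

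If the direct route becomes unwieldy, an alternative is to use the $n$-th product form \eqref{rule4} and compare coefficients. Another option is to verify the equivalent form \eqref{rule5} instead, where the left side $2[a_\lambda b]\circ_\mu c$ expands into just two terms $(a\ast b)\circ c$ and $(b\ast a)\circ c$. Then \eqref{h8} applies almost immediately, and \eqref{f7} with commutativity handles what remains. I would actually try this \eqref{rule5} route first, since it appears shorter.
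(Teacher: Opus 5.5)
Your proposal is correct in substance, and the \eqref{rule5} route you say you would try first is exactly the paper's proof. There, \eqref{h8} with $\mu\mapsto\mu-\lambda$ turns $a\ast_\lambda(b\circ_{\mu-\lambda}c)-(-1)^{|a||b|}b\ast_{\mu-\lambda}(a\circ_\lambda c)$ into $[a_\lambda b]\circ_\mu c$. Commutativity plus Remark~\ref{rem2} then turn the two remaining terms of the expanded right-hand side, $(a\circ_\lambda c)\ast_{-\partial-\mu+\lambda}b$ and $(b\circ_{\mu-\lambda}c)\ast_{-\partial-\lambda}a$, into a second copy of $[a_\lambda b]\circ_\mu c$.

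One correction to your \eqref{rule3} route. The Novikov right-commutativity identity cannot be what closes it. That identity relates products containing two $\ast$'s, while every term in your reduced identity, and in its \eqref{h8}-rewrites, contains exactly one $\ast$ and one $\circ$. Consistently, the theorem holds verbatim for pre-Lie Poisson conformal superalgebras (Proposition~\ref{Pro2}), where no such identity is available. What actually finishes that route is a single application of \eqref{h8} to the triple $(c,b,a)$, with $\lambda$ replaced by $-\partial-\lambda-\mu$ and $\mu$ unchanged. Once commutativity of $\circ_\lambda$ is used to put $a$ first in each $\circ$-product, this is precisely your displayed reduced identity, signs included. There is also a small slip in your sketch: Remark~\ref{rem2} gives $c\circ_{-\partial-\lambda-\mu}(b\ast_\mu a)=(c\circ_{-\partial-\mu}b)\ast_{-\partial-\lambda}a$, not $(c\circ_{-\partial-\lambda}b)\ast\cdots$.
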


\begin{proof} By Proposition \ref{propo1}, $(\mathcal{N},[\cdot_\lambda\cdot])$ is a Lie conformal superalgebra. For all $a,b,c\in \mathcal{N}$, we have
   \begin{align*}
      &[a_{\lambda}b]\circ_{\mu}c=(a\ast_{\lambda} b)\circ_{\mu}c-(-1)^{\abs a\abs b}(b\ast_{-\partial-\lambda} a)\circ_{\mu}c, \\
      &[a_{{\lambda}}(b\circ_{\mu-\lambda}c)]=a\ast_{\lambda} (b\circ_{\mu-\lambda}c)-(-1)^{\abs a(\abs b+\abs c)}(b\circ_{\mu-\lambda}c)\ast_{-\partial-\lambda} a, \\
      &[b_{\mu-\lambda}(a\circ_{\lambda}c)]=b\ast_{\mu-\lambda} (a\circ_{\lambda}c)-(-1)^{\abs b(\abs a+\abs c)}(a\circ_{\lambda}c)\ast_{-\partial-\mu+\lambda} b.
   \end{align*}
   By Remark \ref{rem2}, we obtain
   \begin{align*}
      (a\ast_{\lambda} b)\circ_{\mu}c&=(-1)^{\abs c(\abs a+\abs b)}c\circ_{-\partial-\mu}(a\ast_{\lambda} b)\\
      &=(-1)^{\abs c(\abs a+\abs b)}(c\circ_{-\partial-\lambda}a)\ast_{-\partial+\lambda-\mu}b=(-1)^{\abs b\abs c}(a\circ_{\lambda}c)\ast_{-\partial-\mu+\lambda} b, \\
      (b\ast_{-\partial-\lambda} a)\circ_{\mu}c&=(-1)^{\abs c(\abs a+\abs b)}c\circ_{-\partial-\mu}(b\ast_{-\partial-\lambda} a)\\
      &=(-1)^{\abs c(\abs a+\abs b)}(c\circ_{-\partial+\lambda-\mu}b)\ast_{-\partial-\lambda}a=(-1)^{\abs a\abs c}(b\circ_{\mu-\lambda}c)\ast_{-\partial-\lambda}a.
   \end{align*}
   Thus, we have
   \begin{align*}
      [a_{{\lambda}}(b\circ_{\mu-\lambda}c)]-(-1)^{\abs a\abs b}[b_{\mu-\lambda}(a\circ_{\lambda}c)]=a\ast_{\lambda} (b\circ_{\mu-\lambda}c)-(-1)^{\abs a\abs b}b\ast_{\mu-\lambda} (a\circ_{\lambda}c)+[a_{\lambda}b]\circ_{\mu}c.
   \end{align*}
   By taking $\lambda:=\lambda,~\mu:=\mu-\lambda$ in Eq.~\eqref{h8}, we get
   \begin{align*}
      (a\ast_\lambda b)\circ_{\mu}c-a\ast_\lambda(b\circ_{\mu-\lambda} c)=(-1)^{\abs a\abs b}((b\ast_{\mu-\lambda} a)\circ_{\mu}c-b\ast_{\mu-\lambda}(a\circ_\lambda c)).
   \end{align*}
   Since $(b\ast_{-\partial-\lambda} a)\circ_{\mu}c=(b\ast_{\mu-\lambda} a)\circ_{\mu}c$, it follows further that
   \begin{align*}
    (a\ast_\lambda b)\circ_{\mu}c-(-1)^{\abs a\abs b}(b\ast_{\mu-\lambda} a)\circ_{\mu}c&=a\ast_\lambda(b\circ_{\mu-\lambda} c)-(-1)^{\abs a\abs b}b\ast_{\mu-\lambda}(a\circ_\lambda c)\\
    &=[a_{\lambda}b]\circ_{\mu}c.
   \end{align*}
   Combining the above equalities, we arrive at
   \begin{align*}
      [a_{{\lambda}}(b\circ_{\mu-\lambda}c)]-(-1)^{\abs a\abs b}[b_{\mu-\lambda}(a\circ_{\lambda}c)]=2[a_{\lambda}b]\circ_{\mu}c,
   \end{align*}
   so relation \eqref{rule5} holds. This completes the proof.
\end{proof}

\begin{lemma}\label{Lem1.1}
   Let $(\mathcal{N} ,\circ_\lambda)$ be a commutative associative conformal superalgebra and $D$ be an even derivation. Define a binary operation $\ast_\lambda$ on $\mathcal{N}$ by
   \begin{align}
      a\ast_\lambda b=a\circ_\lambda D(b), \quad \forall~a,b \in \mathcal{N}.
   \end{align}
Then the following conclusions hold:
\begin{itemize}
\item[(1)] $(\mathcal{N} ,\ast_\lambda)$ is a Novikov conformal superalgebra,
\item[(2)] $(\mathcal{N} ,\circ_\lambda,\ast_\lambda)$ is a Novikov-Poisson conformal superalgebra.
\end{itemize}


\end{lemma}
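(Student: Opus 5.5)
Throughout, I read ``even derivation'' as an even $\mathbb{C}[\partial]$-linear map $D$ (so $D\partial=\partial D$) satisfying $D(a\circ_\lambda b)=D(a)\circ_\lambda b+a\circ_\lambda D(b)$; since $D$ is even, no sign appears in the second term. The plan is a direct verification of each axiom, where commutativity and associativity of $\circ_\lambda$ (together with Remarks \ref{rem1} and \ref{rem3}) let us move $D$ freely.

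\emph{Sesquilinearity.} We have $(\partial a)\ast_\lambda b=(\partial a)\circ_\lambda D(b)=-\lambda\, a\ast_\lambda b$. Also $a\ast_\lambda \partial b=a\circ_\lambda \partial D(b)=(\partial+\lambda)\,a\ast_\lambda b$, using $D\partial=\partial D$. Parity is preserved because $D$ is even.

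\emph{Novikov right identity.} We compute
\[
(a\ast_\lambda b)\ast_{\lambda+\mu}c=(a\circ_\lambda D b)\circ_{\lambda+\mu}Dc=a\circ_\lambda(Db\circ_\mu Dc).
\]
The other side is $(-1)^{|b||c|}(a\circ_\lambda Dc)\circ_{-\mu-\partial}Db$. By the first identity of Remark \ref{rem1} this equals $(-1)^{|b||c|}a\circ_\lambda(Dc\circ_{-\partial-\mu}Db)$. Commutativity turns $Dc\circ_{-\partial-\mu}Db$ into $(-1)^{|b||c|}Db\circ_\mu Dc$, so the two sides agree. One point needs care: the first identity of Remark \ref{rem1} has $-\partial$ acting on the whole expression, and I would check that it applies verbatim here.

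\emph{Left-symmetry.} Expanding with the Leibniz rule,
\[
(a\ast_\lambda b)\ast_{\lambda+\mu}c-a\ast_\lambda(b\ast_\mu c)=a\circ_\lambda(Db\circ_\mu Dc)-a\circ_\lambda(Db\circ_\mu Dc)-a\circ_\lambda(b\circ_\mu D^2c)=-a\circ_\lambda(b\circ_\mu D^2c).
\]
By Remark \ref{rem3}, this last expression is $(-1)^{|a||b|}$-symmetric under swapping $(a,\lambda)\leftrightarrow(b,\mu)$, which is exactly \eqref{eq:2-18}. This proves (1).

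\emph{Part (2).} Identity \eqref{f7} follows directly from associativity:
\[
(a\circ_\lambda b)\ast_{\lambda+\mu}c=(a\circ_\lambda b)\circ_{\lambda+\mu}Dc=a\circ_\lambda(b\circ_\mu Dc)=a\circ_\lambda(b\ast_\mu c).
\]
For \eqref{h8}, the left side is
\[
(a\circ_\lambda Db)\circ_{\lambda+\mu}c-a\circ_\lambda D(b\circ_\mu c)=-a\circ_\lambda(b\circ_\mu Dc),
\]
after expanding $D(b\circ_\mu c)$ and cancelling via associativity. Again by Remark \ref{rem3}, $-a\circ_\lambda(b\circ_\mu Dc)$ is $(-1)^{|a||b|}$-symmetric in $(a,\lambda)\leftrightarrow(b,\mu)$, which gives \eqref{h8}.

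The main obstacle is the bookkeeping of the $-\partial-\mu$ substitution in the Novikov right identity. I would handle it by invoking Remark \ref{rem1} rather than expanding into $n$-th products.
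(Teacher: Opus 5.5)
Your proposal is correct and follows the paper's proof essentially step by step: the same sesquilinearity check, the same reductions of left-symmetry and of \eqref{h8} to $-a\circ_\lambda(b\circ_\mu D^2c)$ and $-a\circ_\lambda(b\circ_\mu Dc)$ followed by Remark~\ref{rem3}, and the same one-line associativity argument for \eqref{f7}. The only difference is in the Novikov identity: you pull out $a\circ_\lambda$ with the first identity of Remark~\ref{rem1} and then apply commutativity to $Dc\circ_{-\partial-\mu}Db$, whereas the paper applies commutativity to the outer product and then uses Remark~\ref{rem3} and associativity; your step is valid, because that identity is associativity with $\nu\mapsto-\partial-\lambda-\mu$ substituted, and by sesquilinearity the inner $\partial$ becomes $\partial+\lambda$, which is exactly what is needed.
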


\begin{proof}
(1) Let $a,b,c \in \mathcal{N}$.  We first show that $(\mathcal{N} ,\ast_\lambda)$ is a conformal superalgebra.
   \begin{align*}
      &(\partial a)\ast_\lambda b=(\partial a)\circ_\lambda D(b)=-\lambda (a\circ_\lambda D(b))=-\lambda(a\ast_\lambda b), \\
      &a\ast_\lambda (\partial b)=a\circ_\lambda D(\partial b)=a\circ_\lambda (\partial D(b))=(\partial+\lambda) (a\circ_\lambda D(b))=(\partial+\lambda)(a\ast_\lambda b).
   \end{align*}
Next we show that $(\mathcal{N} ,\ast_\lambda)$ is a left-symmetric conformal superalgebra. Since
   \begin{align*}
      &(a\ast_\lambda b)\ast_{\lambda+\mu}c-a\ast_\lambda(b\ast_\mu c)=(a\circ_\lambda D(b))\circ_{\lambda+\mu} D(c)-a\circ_\lambda D(b\circ_\mu D(c))  \\
      &=a\circ_\lambda (D(b)\circ_{\mu} D(c))-a\circ_\lambda (D(b)\circ_\mu D(c))-a\circ_\lambda (b\circ_\mu D^2(c))=-a\circ_\lambda (b\circ_\mu D^2(c)),
   \end{align*}
we similarly have
   \begin{align*}
        (b\ast_\mu a)\ast_{\lambda+\mu}c-b\ast_\mu(a\ast_\lambda c)=-b\circ_\mu (a\circ_\lambda D^2(c)).
   \end{align*}
By Remark \ref{rem3}, we obtain $a\circ_\lambda (b\circ_\mu D^2(c))=(-1)^{\abs a\abs b}b\circ_\mu (a\circ_\lambda D^2(c)).$ Thus, we get
$$(a\ast_\lambda b)\ast_{\lambda+\mu}c-a\ast_\lambda(b\ast_\mu c)=(-1)^{\abs a\abs b}((b\ast_\mu a)\ast_{\lambda+\mu}c-b\ast_\mu(a\ast_\lambda c)).$$
  Finally, we show that $(\mathcal{N} ,\ast_\lambda)$ is a Novikov conformal superalgebra. Since
   \begin{align*}
      &(a\ast_\lambda c)\ast_{-\mu-\partial}b=(a\circ_\lambda D(c))\circ_{-\mu-\partial}D(b)=(-1)^{\abs b(\abs a+\abs c)}D(b)\circ_{\mu}(a\circ_\lambda D(c)) \\
      &=(-1)^{\abs b\abs c}a\circ_\lambda(D(b)\circ_{\mu} D(c))=(-1)^{\abs b\abs c}(a\circ_\lambda D(b))\circ_{\lambda+\mu} D(c)=(-1)^{\abs b\abs c}(a\ast_\lambda b)\ast_{\lambda+\mu}c.
   \end{align*}
Hence, $(\mathcal{N} ,\ast_\lambda)$ is a Novikov conformal superalgebra.

(2) Let $a,b,c \in \mathcal{N}$. Relation \eqref{f7} holds, since
   \begin{align*}
      (a\circ_\lambda b)\ast_{\lambda+\mu}c=(a\circ_\lambda b)\circ_{\lambda+\mu}D(c)=a\circ_\lambda(b\circ_{\mu}D(c))=a\circ_\lambda(b\ast_{\mu}c).
   \end{align*}
To show relation \eqref{h8} holds, we compute
   \begin{align*}
      &(a\ast_\lambda b)\circ_{\lambda+\mu}c-a\ast_\lambda(b\circ_\mu c)=(a\circ_\lambda D(b))\circ_{\lambda+\mu}c-a\circ_\lambda D(b\circ_\mu c) \\
      &=a\circ_\lambda (D(b)\circ_{\mu}c)-a\circ_\lambda (D(b)\circ_{\mu}c)-a\circ_\lambda (b\circ_{\mu}D(c))=-a\circ_\lambda (b\circ_{\mu}D(c)),
   \end{align*}
and we similarly have
 $$(b\ast_\mu a)\circ_{\lambda+\mu}c-b\ast_\mu(a\circ_\lambda c)=-b\circ_\mu (a\circ_{\lambda}D(c)).$$
By Remark \ref{rem3}, we get $a\circ_\lambda (b\circ_{\mu}D(c))=(-1)^{\abs a\abs b}b\circ_\mu (a\circ_{\lambda}D(c))$. Thus, we obtain
   \begin{align*}
      (a\ast_\lambda b)\circ_{\lambda+\mu}c-a\ast_\lambda(b\circ_\mu c)=(-1)^{\abs a\abs b}((b\ast_\mu a)\circ_{\lambda+\mu}c-b\ast_\mu(a\circ_\lambda c)).
   \end{align*}

Altogether, $(\mathcal{N} ,\circ_\lambda,\ast_\lambda)$ is a Novikov-Poisson conformal superalgebra.
\end{proof}

As a consequence of Theorem \ref{The1.2} and Lemma \ref{Lem1.1}, we obtain 

\begin{corollary}
   Let $(\mathcal{N} ,\circ_\lambda)$ be a commutative associative conformal superalgebra and $D$ be an even derivation. Then $(\mathcal{N} ,\circ_\lambda,[\cdot_{\lambda}\cdot])$ is a TPCSA, where
   \begin{align}
      [a_{\lambda}b]=a\circ_\lambda D(b)-(-1)^{\abs a\abs b}b\circ_{-\partial-\lambda} D(a), \quad \forall ~a,b \in \mathcal{N}.
   \end{align}
\end{corollary}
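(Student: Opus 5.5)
This corollary follows by chaining the two results just proved. First, Lemma~\ref{Lem1.1}(2) says that for an even derivation $D$ of the commutative associative conformal superalgebra $(\mathcal{N},\circ_\lambda)$, the product $a\ast_\lambda b=a\circ_\lambda D(b)$ makes $(\mathcal{N},\circ_\lambda,\ast_\lambda)$ a Novikov-Poisson conformal superalgebra. Second, Theorem~\ref{The1.2} says that the commutator bracket
\[
[a_\lambda b]=a\ast_\lambda b-(-1)^{\abs a\abs b}\,b\ast_{-\partial-\lambda}a
\]
then makes $(\mathcal{N},\circ_\lambda,[\cdot_\lambda\cdot])$ a TPCSA.

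It remains to check that this commutator bracket is the bracket in the statement. Substituting the definition of $\ast_\lambda$ gives
\[
[a_\lambda b]=a\circ_\lambda D(b)-(-1)^{\abs a\abs b}\,b\circ_{-\partial-\lambda}D(a).
\]
Here the symbol $b\ast_{-\partial-\lambda}a$ means: compute $b\ast_\mu a=b\circ_\mu D(a)$ as a polynomial in $\mu$ with coefficients in $\mathcal{N}$, then replace $\mu$ by $-\partial-\lambda$, with $\partial$ acting on the coefficients from the left. Since $\ast_\mu$ and $\circ_\mu D(\cdot)$ agree as polynomials in $\mu$, this substitution yields exactly $b\circ_{-\partial-\lambda}D(a)$, which is the formula in the statement.

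Two small points need mention, and neither is a real obstacle. The sign $(-1)^{\abs a\abs b}$ is correct because $D$ is even, so $\abs{D(a)}=\abs a$ and the parities entering the commutator are unchanged. Also, the Lie conformal superalgebra axioms for the bracket need no separate verification: Theorem~\ref{The1.2} already supplies them through Proposition~\ref{propo1}, which uses only the left-symmetry established in Lemma~\ref{Lem1.1}(1).
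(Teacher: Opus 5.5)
Your proposal is correct and is essentially the paper's argument, which obtains the corollary by combining Lemma~\ref{Lem1.1}(2) with Theorem~\ref{The1.2}. Substituting $a\ast_\lambda b=a\circ_\lambda D(b)$ into the commutator bracket then gives exactly the stated formula.
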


\begin{definition}
   A {\bf pre-Lie commutative conformal superalgebra}  $\mathcal{L}=\mathcal{L}_{\bar{0}}\oplus \mathcal{L}_{\bar{1}}$ is a $\Ztwo$-graded $\mathbb{C}[\partial]$-module $\mathcal{L}$ endowed with two $\mathbb{C}$-bilinear maps $\mathcal{L}  \otimes \mathcal{L}  \rightarrow \mathcal{L} [\lambda]$, denoted by $a\otimes b\mapsto a \circ_{\lambda}b$ and 
    $a\otimes b\mapsto a \ast_\lambda b$, respectively, such that $(\mathcal{L} ,\circ_\lambda)$ is a {commutative associative conformal superalgebra}, $(\mathcal{L} ,\ast_\lambda)$ is a {left-symmetric conformal superalgebra} and the following equation holds for all $a,b,c \in \mathcal{L}$:
   \begin{align}\label{hh1}
      a\ast_\lambda(b\circ_\mu c)=(a\ast_\lambda b)\circ_{\lambda+\mu}c+(-1)^{\abs a\abs b}b\circ_\mu (a\ast_\lambda c).
   \end{align}
\end{definition}

\begin{example}
   Let $(\mathcal{L} ,\circ_\lambda)$ be a commutative associative conformal superalgebra and $D$ be an even derivation.
   It is straightforward to verify that $(\mathcal{L} ,\circ_\lambda,\ast_\lambda)$ is a pre-Lie commutative conformal superalgebra,
   where $(\mathcal{L} ,\ast_\lambda)$ is defined by $a\ast_\lambda b=a\circ_\lambda D(b)$ for all $a,b \in \mathcal{L}$.
\end{example}

A {\bf differential Novikov-Poisson conformal superalgebra} is a triple $(\mathcal{N} ,\circ_\lambda,\ast_\lambda)$ such that
$(\mathcal{N} ,\circ_\lambda)$ is a {commutative associative conformal superalgebra}, $(\mathcal{N} ,\ast_\lambda)$ is a {Novikov conformal superalgebra},
and Eqs.~\eqref{f7}, \eqref{h8} and \eqref{hh1} hold.

\begin{definition}
   A {\textbf{pre-Lie Poisson conformal superalgebra}} $\mathcal{L}=\mathcal{L}_{\bar{0}}\oplus \mathcal{L}_{\bar{1}}$ is a $\Ztwo$-graded $\mathbb{C}[\partial]$-module endowed with two $\mathbb{C}$-bilinear maps $\mathcal{L}  \otimes \mathcal{L}  \rightarrow \mathcal{L} [\lambda]$, denoted by $a\otimes b\mapsto a \circ_{\lambda}b$ and 
    $a\otimes b\mapsto a \ast_\lambda b$, respectively, such that $(\mathcal{L} ,\circ_\lambda)$ is a {commutative associative conformal superalgebra}, $(\mathcal{L} ,\ast_\lambda)$ is a {left-symmetric conformal superalgebra} and the following equations hold for all $a,b,c \in \mathcal{L}$:
   \begin{align}
      &(a\circ_\lambda b)\ast_{\lambda+\mu}c=a\circ_\lambda(b\ast_\mu c),\label{eq:4-7}\\
      &(a\ast_\lambda b)\circ_{\lambda+\mu}c-a\ast_\lambda(b\circ_\mu c)=(-1)^{\abs a\abs b}((b\ast_\mu a)\circ_{\lambda+\mu}c-b\ast_\mu(a\circ_\lambda c)). \label{eq:4-8}
   \end{align}
\end{definition}

\begin{lemma}\label{Lem1.2}
The relationship among these notions can be described as follows:

(1) A Novikov-Poisson conformal superalgebra is a pre-Lie Poisson conformal superalgebra.

(2) A pre-Lie commutative conformal superalgebra $(\mathcal{L} ,\circ_\lambda,\ast_\lambda)$ satisfying Eq.~\eqref{f7} is a pre-Lie Poisson conformal superalgebra.

(3) A differential Novikov-Poisson conformal superalgebra is a pre-Lie commutative conformal superalgebra satisfying Eq.~\eqref{f7} and hence is pre-Lie Poisson conformal superalgebra.
\end{lemma}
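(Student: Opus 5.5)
Each part of the lemma is a direct comparison of definitions, so the plan is to list the axioms of a pre-Lie Poisson conformal superalgebra and check them one by one. Such an object needs four things: $(\mathcal{L},\circ_\lambda)$ is a commutative associative conformal superalgebra; $(\mathcal{L},\ast_\lambda)$ is a left-symmetric conformal superalgebra; and Eqs.~\eqref{eq:4-7} and \eqref{eq:4-8} hold.

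For (1), start from a Novikov-Poisson conformal superalgebra $(\mathcal{N},\circ_\lambda,\ast_\lambda)$. By definition, $(\mathcal{N},\ast_\lambda)$ is a Novikov conformal superalgebra, and in particular a left-symmetric one (Definition~\ref{defi10}). Eq.~\eqref{eq:4-7} is literally Eq.~\eqref{f7}, and Eq.~\eqref{eq:4-8} is literally Eq.~\eqref{h8}. So (1) follows immediately.

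For (2), take a pre-Lie commutative conformal superalgebra satisfying Eq.~\eqref{f7}. The first three requirements hold by assumption, so the only thing to prove is Eq.~\eqref{eq:4-8}, and I would derive it from Eq.~\eqref{hh1}.

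\begin{itemize}
\item Rewriting \eqref{hh1} gives
\[
(a\ast_\lambda b)\circ_{\lambda+\mu}c-a\ast_\lambda(b\circ_\mu c)=-(-1)^{\abs a\abs b}\,b\circ_\mu(a\ast_\lambda c).
\]
\item Swapping $(a,\lambda)\leftrightarrow(b,\mu)$ in \eqref{hh1} gives
\[
(b\ast_\mu a)\circ_{\lambda+\mu}c-b\ast_\mu(a\circ_\lambda c)=-(-1)^{\abs a\abs b}\,a\circ_\lambda(b\ast_\mu c).
\]
\item Multiply the second line by $(-1)^{\abs a\abs b}$. Then \eqref{eq:4-8} reduces to
\[
b\circ_\mu(a\ast_\lambda c)=(-1)^{\abs a\abs b}\,a\circ_\lambda(b\ast_\mu c).
\]
\end{itemize}

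To prove this last identity, apply \eqref{f7} to both sides:
\[
b\circ_\mu(a\ast_\lambda c)=(b\circ_\mu a)\ast_{\lambda+\mu}c,\qquad a\circ_\lambda(b\ast_\mu c)=(a\circ_\lambda b)\ast_{\lambda+\mu}c.
\]
Commutativity gives $b\circ_\mu a=(-1)^{\abs a\abs b}a\circ_{-\partial-\mu}b$. Inside $(\,\cdot\,)\ast_{\lambda+\mu}c$, sesquilinearity turns $-\partial$ into $\lambda+\mu$. This replaces $a\circ_{-\partial-\mu}b$ by $a\circ_\lambda b$, exactly the substitution used earlier in the paper in the proof of Eq.~\eqref{eqh}. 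The two sides then agree, which proves \eqref{eq:4-8}.

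For (3), a differential Novikov-Poisson conformal superalgebra has a Novikov product, which is in particular left-symmetric. It also satisfies \eqref{hh1} and \eqref{f7} by definition, so it is a pre-Lie commutative conformal superalgebra satisfying \eqref{f7}. Part (2) then finishes the argument.

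The only real obstacle is in (2): keeping the signs and spectral parameters straight. In particular, the step that replaces $-\partial-\mu$ by $\lambda$ inside the outer $\ast_{\lambda+\mu}$ product relies on the sesquilinearity of $\ast_\lambda$ in its first argument.
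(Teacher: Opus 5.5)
Your proposal is correct and follows essentially the same route as the paper: for (2) you isolate both sides of \eqref{eq:4-8} via \eqref{hh1}, convert them with \eqref{f7}, and match them using commutativity plus sesquilinearity ($-\partial-\mu\mapsto\lambda$ inside $\ast_{\lambda+\mu}$), exactly as in the paper's proof. Parts (1) and (3) are the same direct definitional checks that the paper calls straightforward.
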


\begin{proof}
    The proofs of (1) and (3) are straightforward. It suffices to prove (2). Let $a,b,c \in \mathcal{L}$. By Eqs.~\eqref{f7} and \eqref{hh1}, we have
   \begin{align*}
        &(a\ast_\lambda b)\circ_{\lambda+\mu}c-a\ast_\lambda(b\circ_\mu c)=-(-1)^{\abs a\abs b}b\circ_\mu (a\ast_\lambda c)=-(-1)^{\abs a\abs b}(b\circ_{\mu} a)\ast_{\lambda+\mu}c,\\
        &(b\ast_\mu a)\circ_{\lambda+\mu}c-b\ast_\mu(a\circ_\lambda c)=-(-1)^{\abs a\abs b}a\circ_\lambda (b\ast_\mu c)=-(-1)^{\abs a\abs b}(a\circ_\lambda b)\ast_{\lambda+\mu}c.
   \end{align*}
Observe that
   \begin{align*}
        (a\circ_\lambda b)\ast_{\lambda+\mu}c=(-1)^{\abs a\abs b}(b\circ_{-\partial-\lambda} a)\ast_{\lambda+\mu}c=(-1)^{\abs a\abs b}(b\circ_{\mu} a)\ast_{\lambda+\mu}c.
   \end{align*}
Thus, we obtain
   \begin{align*}
       (a\ast_\lambda b)\circ_{\lambda+\mu}c-a\ast_\lambda(b\circ_\mu c)=(-1)^{\abs a\abs b}((b\ast_\mu a)\circ_{\lambda+\mu}c-b\ast_\mu(a\circ_\lambda c)).
   \end{align*}
Hence, $(\mathcal{L} ,\circ_\lambda,\ast_\lambda)$ is a pre-Lie Poisson conformal superalgebra.
\end{proof}

It follows from Proposition \ref{propo1} that the commutator of a left-symmetric conformal superalgebra is a Lie conformal superalgebra.
Applying an argument similar to that used in the proof of Theorem \ref{The1.2}, we obtain the following proposition.

\begin{proposition}\label{Pro2}
   Let $(\mathcal{L} ,\circ_\lambda,\ast_\lambda)$ be a pre-Lie Poisson conformal superalgebra. Define
   \begin{align}
      [a_{\lambda}b]=a\ast_{\lambda} b-(-1)^{\abs a\abs b}(b\ast_{-\partial-\lambda} a), \quad \forall ~a,b \in \mathcal{L}.
   \end{align}
Then $(\mathcal{L} ,\circ_\lambda,[\cdot_\lambda\cdot])$ is a TPCSA.
\end{proposition}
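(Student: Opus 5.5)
We follow the proof of Theorem~\ref{The1.2} step by step, using only the axioms of a pre-Lie Poisson conformal superalgebra. By Proposition~\ref{propo1}, the commutator $[a_\lambda b]=a\ast_\lambda b-(-1)^{\abs a\abs b}b\ast_{-\partial-\lambda}a$ of the left-symmetric conformal superalgebra $(\mathcal L,\ast_\lambda)$ is a Lie conformal superalgebra. Since $(\mathcal L,\circ_\lambda)$ is commutative associative by assumption, it remains only to verify the transposed conformal super-Leibniz rule in its equivalent form \eqref{rule5}.

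The first step is to expand all three terms $[a_\lambda b]\circ_\mu c$, $[a_\lambda(b\circ_{\mu-\lambda}c)]$ and $[b_{\mu-\lambda}(a\circ_\lambda c)]$ using the definition of the bracket. We then establish the analogue of Remark~\ref{rem2} for pre-Lie Poisson conformal superalgebras. In the proof of Theorem~\ref{The1.2}, only Eq.~\eqref{f7} (together with commutativity and sesquilinearity of $\circ_\lambda$) was used to derive those substituted versions, not the Novikov identity. In a pre-Lie Poisson conformal superalgebra, Eq.~\eqref{eq:4-7} is exactly \eqref{f7}, so the same substitutions $\lambda\mapsto-\partial-\lambda$ and $\mu\mapsto-\partial-\mu$ give
\begin{align*}
(a\ast_\lambda b)\circ_\mu c&=(-1)^{\abs b\abs c}(a\circ_\lambda c)\ast_{-\partial-\mu+\lambda}b,\\
(b\ast_{-\partial-\lambda}a)\circ_\mu c&=(-1)^{\abs a\abs c}(b\circ_{\mu-\lambda}c)\ast_{-\partial-\lambda}a.
\end{align*}
These identities let the ``second halves'' of the two brackets on the right of \eqref{rule5} recombine into $[a_\lambda b]\circ_\mu c$. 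The result is
\[
[a_\lambda(b\circ_{\mu-\lambda}c)]-(-1)^{\abs a\abs b}[b_{\mu-\lambda}(a\circ_\lambda c)]=a\ast_\lambda(b\circ_{\mu-\lambda}c)-(-1)^{\abs a\abs b}b\ast_{\mu-\lambda}(a\circ_\lambda c)+[a_\lambda b]\circ_\mu c .
\]

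Next, we apply Eq.~\eqref{eq:4-8} with $\mu$ replaced by $\mu-\lambda$. We also use $(b\ast_{-\partial-\lambda}a)\circ_\mu c=(b\ast_{\mu-\lambda}a)\circ_\mu c$, which follows from sesquilinearity in the first argument of $\circ_\mu$. Together these identify the remaining two $\ast$-terms with $[a_\lambda b]\circ_\mu c$. Summing gives $2[a_\lambda b]\circ_\mu c$, which is \eqref{rule5}.

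The main obstacle is confirming that the Remark~\ref{rem2}-type identities hold without the Novikov axiom, and keeping the sign bookkeeping straight. To handle this, we will rederive each one directly from \eqref{eq:4-7}. For instance, write $c\circ_{-\partial-\mu}(a\ast_\lambda b)$ via commutativity, then apply \eqref{eq:4-7} with the parameters $(-\partial-\mu,\lambda)$. This is legitimate because \eqref{eq:4-7} is a polynomial identity in $\lambda,\mu$, and the substitution $\lambda\mapsto -\partial-\lambda$ can be performed in the usual way with $\partial$ acting on the whole expression.
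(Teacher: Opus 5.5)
Your proposal is correct and matches the paper's approach: the paper proves this proposition only by pointing to the proof of Theorem~\ref{The1.2}, and you carry that proof over, correctly observing that the Remark~\ref{rem2}-type identities need only \eqref{eq:4-7}, commutativity of $\circ_\lambda$ and sesquilinearity (never the Novikov identity), while \eqref{eq:4-8} plays the role of \eqref{h8}. One small omission: those rederivations also use sesquilinearity of $\ast_\lambda$ in its first argument, which you did not list, but it holds because $(\mathcal L,\ast_\lambda)$ is a conformal superalgebra.
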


Combining Proposition \ref{Pro2} and Lemma \ref{Lem1.2}, we obtain 
\begin{corollary}
   Let $(\mathcal{L} ,\circ_\lambda,\ast_\lambda)$ be a pre-Lie commutative conformal superalgebra. Suppose that Eq.~\eqref{f7} holds. Then $(\mathcal{L} ,\circ_\lambda,[\cdot_\lambda\cdot])$ is a TPCSA, where
   the binary operation $[\cdot_\lambda\cdot]$ is defined by $[a_{\lambda}b]=a\ast_{\lambda} b-(-1)^{\abs a\abs b}(b\ast_{-\partial-\lambda} a)$ for all $a,b \in \mathcal{L}$. In particular, the conclusion holds when $(\mathcal{L} ,\circ_\lambda,\ast_\lambda)$ is a differential Novikov-Poisson conformal superalgebra.
\end{corollary}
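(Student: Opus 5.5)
The corollary follows by chaining Lemma~\ref{Lem1.2} with Proposition~\ref{Pro2}, so the plan is to check the hypotheses of each in turn rather than verify \eqref{rule5} directly. Let $(\mathcal{L},\circ_\lambda,\ast_\lambda)$ be a pre-Lie commutative conformal superalgebra satisfying \eqref{f7}. By Lemma~\ref{Lem1.2}(2) it is a pre-Lie Poisson conformal superalgebra. Indeed, \eqref{eq:4-7} is exactly \eqref{f7}, and \eqref{eq:4-8} was derived in that lemma from \eqref{f7}, \eqref{hh1} and the commutativity $(a\circ_\lambda b)\ast_{\lambda+\mu}c=(-1)^{\abs a\abs b}(b\circ_\mu a)\ast_{\lambda+\mu}c$. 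Proposition~\ref{Pro2} then gives that $(\mathcal{L},\circ_\lambda,[\cdot_\lambda\cdot])$, with the commutator bracket, is a TPCSA. For the ``in particular'' clause, Lemma~\ref{Lem1.2}(3) says a differential Novikov-Poisson conformal superalgebra is a pre-Lie commutative conformal superalgebra satisfying \eqref{f7}, so it falls under the first part.

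The only substantive point is that Proposition~\ref{Pro2} is stated with its proof deferred to ``an argument similar to'' that of Theorem~\ref{The1.2}. If I had to supply it, I would redo that computation. By Proposition~\ref{propo1}, the commutator $[\cdot_\lambda\cdot]$ is a Lie conformal superalgebra. Next I would expand $[a_\lambda(b\circ_{\mu-\lambda}c)]-(-1)^{\abs a\abs b}[b_{\mu-\lambda}(a\circ_\lambda c)]$ and $[a_\lambda b]\circ_\mu c$ into $\ast$-terms.

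I would convert the terms of the form $(x\ast y)\circ z$ using the analogues of Remark~\ref{rem2}. These follow from \eqref{eq:4-7} together with commutativity of $\circ_\lambda$ by the same substitutions as in Remark~\ref{rem1}. They show that the two "reverse" terms $(b\circ_{\mu-\lambda}c)\ast_{-\partial-\lambda}a$ and $(a\circ_\lambda c)\ast_{-\partial-\mu+\lambda}b$ reproduce $[a_\lambda b]\circ_\mu c$. Finally, I would use \eqref{eq:4-8} with $\mu$ replaced by $\mu-\lambda$ to identify the remaining terms with a second copy of $[a_\lambda b]\circ_\mu c$, which yields \eqref{rule5}.

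The main obstacle is this transfer of Remark~\ref{rem2}. Its identities were stated for Novikov-Poisson conformal superalgebras, but their derivation uses only \eqref{f7} and commutativity, not the Novikov identity. So the care needed is in checking the signs and the $-\partial$ substitutions, after which the computation is verbatim that of Theorem~\ref{The1.2}.
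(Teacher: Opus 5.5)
Your proposal is correct and is exactly the paper's argument: the paper states the corollary as an immediate consequence of Lemma~\ref{Lem1.2}(2),(3) combined with Proposition~\ref{Pro2}. Your supplementary sketch of Proposition~\ref{Pro2} also follows the route the paper indicates, namely the computation of Theorem~\ref{The1.2}, and you are right that this computation never uses the Novikov identity; it needs only \eqref{eq:4-7}, \eqref{eq:4-8}, commutativity of $\circ_\lambda$, and left-symmetry via Proposition~\ref{propo1}.
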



\begin{theorem}
Let $(\mathcal{L}_1 ,\circ^{1}_{\lambda},\ast^{1}_{\lambda})$ and $(\mathcal{L}_2 ,\circ^{2}_{\lambda},\ast^{2}_{\lambda})$ be two pre-Lie Poisson conformal superalgebras.
Set $$\mathcal L:=\mathcal L_1\otimes_{\mathbb C[\partial]} \mathcal L_2,\quad \abs{a\otimes b}=\abs a+\abs b$$ and the $\mathbb C[\partial]$-module structure on $\mathcal L$ is given by
\begin{align*}
\partial(a\otimes b):=(\partial a)\otimes b = a\otimes (\partial b),
\quad a\in\mathcal L_1,\, b\in\mathcal L_2 .
\end{align*}
Define two $\lambda$-products $\circ_\lambda$ and $*_\lambda$ on $\mathcal L$ respectively by
\begin{gather*}
(a_1\otimes a_2)\circ_\lambda (b_1\otimes b_2)
= (-1)^{\abs {a_2}\abs {b_1}}(a_1\circ^{1}_\lambda b_1)\otimes (a_2\circ^{2}_\lambda b_2),\\
(a_1\otimes a_2)\ast_\lambda (b_1\otimes b_2)
= (-1)^{\abs {a_2}\abs {b_1}}((a_1\ast^{1}_\lambda b_1)\otimes (a_2\circ^{2}_\lambda b_2)+(a_1\circ^{1}_\lambda b_1)\otimes (a_2\ast^{2}_\lambda b_2)),
\end{gather*}
for all $a_1,b_1 \in \mathcal{L}_1$, $a_2,b_2 \in \mathcal{L}_2$.
Then $(\mathcal L,\circ_\lambda,\ast_\lambda)$ is a pre-Lie Poisson conformal superalgebra.
\end{theorem}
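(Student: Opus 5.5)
The verification splits into four parts, in the same pattern as the tensor product theorem for TPCSAs: (i) $(\mathcal L,\circ_\lambda)$ is a commutative associative conformal superalgebra, (ii) $\ast_\lambda$ is well defined and sesquilinear, (iii) $(\mathcal L,\ast_\lambda)$ is left-symmetric, and (iv) the compatibilities \eqref{eq:4-7} and \eqref{eq:4-8} hold.

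\emph{Parts (i) and (ii).} Part (i) is exactly what was already noted ("easy to verify") in the proof of the tensor product theorem for TPCSAs, since $\circ_\lambda$ is defined by the same formula, so I will cite it. For (ii), I first check that $\ast_\lambda$ respects the relation $(\partial a_1)\otimes a_2=a_1\otimes(\partial a_2)$. Both summands of $(a_1\otimes a_2)\ast_\lambda(b_1\otimes b_2)$ pick up the factor $-\lambda$ under either substitution, by sesquilinearity of $\ast^i,\circ^i$. Similarly, $\partial$ in the second argument yields $(\partial+\lambda)$, using that $\partial$ acts diagonally on the tensor product over $\mathbb C[\partial]$. This is the same computation as item (a) of the TPCSA tensor theorem.

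\emph{Part (iv), the compatibilities.} For \eqref{eq:4-7}, I expand $((a_1\otimes a_2)\circ_\lambda(b_1\otimes b_2))\ast_{\lambda+\mu}(c_1\otimes c_2)$ into two tensor summands. I then apply \eqref{eq:4-7} in $\mathcal L_1$ to the first summand and associativity of $\circ^2$ to the second, and symmetrically for the other summand. The signs collect to the common factor $h_0=(-1)^{|a_2||b_1|+(|a_2|+|b_2|)|c_1|}$ on both sides.

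\emph{Parts (iii) and (iv), the associators.} Left-symmetry \eqref{eq:2-18} and \eqref{eq:4-8} are both statements that an associator-type expression is supersymmetric in $a,b$. Write $A=a_1\otimes a_2$, etc. For left-symmetry, expand $(A\ast_\lambda B)\ast_{\lambda+\mu}C-A\ast_\lambda(B\ast_\mu C)$ into four tensor types according to where the two $\ast$'s land:
\begin{itemize}
\item $(\ast\ast\mid\circ\circ)$,
\item $(\circ\circ\mid\ast\ast)$,
\item $(\ast\circ\mid\circ\ast)$,
\item $(\circ\ast\mid\ast\circ)$.
\end{itemize}
The first gives (left-symmetric associator in $\mathcal L_1$)$\otimes(a_2\circ_\lambda(b_2\circ_\mu c_2))$, which is supersymmetric by \eqref{eq:2-18} together with Remark \ref{rem3}. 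The second is symmetric in the same way.

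The mixed terms are the main obstacle. They produce expressions like
\[
(a_1\ast b_1)\circ c_1\otimes(a_2\circ b_2)\ast c_2 \;-\; a_1\ast(b_1\circ c_1)\otimes a_2\circ(b_2\ast c_2)
\]
together with the two further terms with $\ast$'s placed on $(a_1\circ b_1)\ast c_1$ and $a_1\circ(b_1\ast c_1)$. Here I use \eqref{eq:4-7} in each factor to rewrite $(a_2\circ b_2)\ast c_2=a_2\circ(b_2\ast c_2)$ and $(a_1\circ b_1)\ast c_1=a_1\circ(b_1\ast c_1)$. After that, the mixed contribution becomes
\[
\big[(a_1\ast b_1)\circ c_1-a_1\ast(b_1\circ c_1)\big]\otimes a_2\circ(b_2\ast c_2),
\]
plus the analogous term with roles of $1$ and $2$ swapped, plus terms of the form $a_1\circ(b_1\ast c_1)\otimes(a_2\ast b_2)\circ c_2$. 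The bracketed factor is supersymmetric by \eqref{eq:4-8}. The remaining factor $a_2\circ(b_2\ast c_2)$ must be paired with its swap via Remark \ref{rem3}-type identities for $\circ$ against $\ast$ coming from \eqref{eq:4-7} and commutativity, namely $a\circ_\lambda(b\ast_\mu c)=(-1)^{|a||b|}b\circ_\mu(a\ast_\lambda c)$. I would prove this auxiliary identity first from \eqref{eq:4-7} and commutativity of $\circ$. The leftover cross terms should cancel in pairs between the $A\leftrightarrow B$ sides, much as $(Q1)+(Q2)=0$ did in the TPCSA tensor theorem.

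\emph{Checking \eqref{eq:4-8}.} This is handled the same way but is easier: only one $\ast$ occurs, so there are just two tensor types. Each one reduces to \eqref{eq:4-8} in one factor tensored with a $\circ$-expression in the other factor, and that $\circ$-expression is supersymmetric by Remark \ref{rem3} and \eqref{eq:4-7}.

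Throughout, the sign bookkeeping is the laborious step. I would fix the global sign $h_0$ as above and verify that each swapped term carries $(-1)^{(|a_1|+|a_2|)(|b_1|+|b_2|)}$ relative to it.
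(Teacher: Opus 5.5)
Your proposal is correct and follows essentially the same route as the paper: the paper splits the left-symmetry and \eqref{eq:4-8} defects into the same four tensor types, factors each group using associativity and commutativity of $\circ$ together with $(a\circ_\lambda b)\ast_{\lambda+\mu}c=a\circ_\lambda(b\ast_\mu c)=(-1)^{|a||b|}b\circ_\mu(a\ast_\lambda c)$, and then cancels each group with \eqref{eq:2-18} or \eqref{eq:4-8} in a single tensor factor. One small correction: after your rewriting, the mixed part is exactly an \eqref{eq:4-8}-defect tensored with an $a\circ(b\ast c)$-term, plus the same with the two factors interchanged, so no leftover cross terms remain to cancel; and your check that $\ast_\lambda$ is well defined on $\mathcal L_1\otimes_{\mathbb C[\partial]}\mathcal L_2$ covers a point the paper leaves implicit.
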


\begin{proof}
Let $a_1,b_1,c_1 \in \mathcal{L}_1$, $a_2,b_2,c_2 \in \mathcal{L}_2$. It is known that $(\mathcal L,\circ_\lambda)$ is a commutative associative conformal superalgebra.
Then we show that $(\mathcal L,\ast_\lambda)$ is a {left-symmetric conformal superalgebra}.
Set 
$\varepsilon_0=(\abs {a_1}+\abs {a_2})(\abs {b_1}+\abs {b_2}),\varepsilon_1=(-1)^{|a_2||b_1|+(|a_2|+|b_2|)|c_1|},\varepsilon_2=(-1)^{\varepsilon_0+|b_2||a_1|+(|a_2|+|b_2|)|c_1|}.$
we compute
\begin{align*}
        &((a_1\otimes a_2)\ast_\lambda (b_1\otimes b_2))\ast_{\lambda+\mu}(c_1\otimes c_2)-(a_1\otimes a_2)\ast_\lambda((b_1\otimes b_2)\ast_\mu (c_1\otimes c_2))\\
        &-(-1)^{\varepsilon_0}\big(((b_1\otimes b_2)\ast_\mu (a_1\otimes a_2))\ast_{\lambda+\mu}(c_1\otimes c_2)-(b_1\otimes b_2)\ast_\mu((a_1\otimes a_2)\ast_\lambda (c_1\otimes c_2))\big)\\
       =\ &\varepsilon_1((a_1\ast^{1}_\lambda b_1)\ast^{1}_{\lambda+\mu} c_1)\otimes ((a_2\circ^{2}_\lambda b_2)\circ^{2}_{\lambda+\mu} c_2)
       -\varepsilon_1(a_1\ast^{1}_\lambda (b_1\ast^{1}_\mu c_1))\otimes (a_2\circ^{2}_\lambda (b_2\circ^{2}_\mu c_2))\\
         -\ &\varepsilon_2((b_1\ast^{1}_\mu a_1)\ast^{1}_{\lambda+\mu} c_1)\otimes ((b_2\circ^{2}_\mu a_2)\circ^{2}_{\lambda+\mu} c_2)
        +\varepsilon_2(b_1\ast^{1}_\mu (a_1\ast^{1}_\lambda c_1))\otimes (b_2\circ^{2}_\mu (a_2\circ^{2}_\lambda c_2))\\
       +\  &\varepsilon_1((a_1\ast^{1}_\lambda b_1)\circ^{1}_{\lambda+\mu} c_1)\otimes ((a_2\circ^{2}_\lambda b_2)\ast^{2}_{\lambda+\mu} c_2)
      -\varepsilon_1(a_1\ast^{1}_\lambda(b_1\circ^{1}_\mu c_1))\otimes (a_2\circ^{2}_\lambda (b_2\ast^{2}_\mu c_2))\\
      -\ &\varepsilon_2((b_1\ast^{1}_\mu a_1)\circ^{1}_{\lambda+\mu} c_1)\otimes ((b_2\circ^{2}_\mu a_2)\ast^{2}_{\lambda+\mu} c_2)
      +\varepsilon_2(b_1\ast^{1}_\mu (a_1\circ^{1}_\lambda c_1))\otimes (b_2\circ^{2}_\mu (a_2\ast^{2}_\lambda c_2))\\
       +\ &\varepsilon_1((a_1\circ^{1}_\lambda b_1)\ast^{1}_{\lambda+\mu} c_1)\otimes ((a_2\ast^{2}_\lambda b_2)\circ^{2}_{\lambda+\mu} c_2)
         -\varepsilon_1(a_1\circ^{1}_\lambda (b_1\ast^{1}_\mu c_1))\otimes (a_2\ast^{2}_\lambda (b_2\circ^{2}_\mu c_2))\\
         -\ &\varepsilon_2((b_1\circ^{1}_\mu a_1)\ast^{1}_{\lambda+\mu} c_1)\otimes ((b_2\ast^{2}_\mu a_2)\circ^{2}_{\lambda+\mu} c_2)
      +\varepsilon_2(b_1\circ^{1}_\mu (a_1\ast^{1}_\lambda c_1))\otimes (b_2\ast^{2}_\mu (a_2\circ^{2}_\lambda c_2))\\
       +\ &\varepsilon_1((a_1\circ^{1}_\lambda b_1)\circ^{1}_{\lambda+\mu} c_1)\otimes ((a_2\ast^{2}_\lambda b_2)\ast^{2}_{\lambda+\mu} c_2)
       -\varepsilon_1(a_1\circ^{1}_\lambda (b_1\circ^{1}_\mu c_1))\otimes (a_2\ast^{2}_\lambda (b_2\ast^{2}_\mu c_2))\\
        -\ &\varepsilon_2((b_1\circ^{1}_\mu a_1)\circ^{1}_{\lambda+\mu} c_1)\otimes ((b_2\ast^{2}_\mu a_2)\ast^{2}_{\lambda+\mu} c_2)
        +\varepsilon_2(b_1\circ^{1}_\mu (a_1\circ^{1}_\lambda c_1))\otimes (b_2\ast^{2}_\mu (a_2\ast^{2}_\lambda c_2)).
   \end{align*}

\noindent Observe that $\varepsilon_2=(-1)^{\abs {a_1}\abs{b_1}+\abs {a_2}\abs{b_2}}\varepsilon_1$ and
   \begin{align*}
     &\left(a\circ_{\lambda} b\right)\circ_{\lambda+\mu} c= a\circ_{\lambda}\left(b\circ_{\mu} c\right)=(-1)^{\abs {a}\abs{b}}b\circ_\mu\left(a\circ_\lambda c\right)=(-1)^{\abs {a}\abs{b}}\left(b\circ_\mu a\right)\circ_{\lambda+\mu} c,\\
     &\left(a\circ_{\lambda} b\right)\ast_{\lambda+\mu} c= a\circ_{\lambda}\left(b\ast_{\mu} c\right)=(-1)^{\abs {a}\abs{b}}b\circ_\mu\left(a\ast_\lambda c\right)=(-1)^{\abs {a}\abs{b}}\left(b\circ_\mu a\right)\ast_{\lambda+\mu} c.
   \end{align*}
Hence, we set $l_1=(-1)^{\abs {a_1}\abs{b_1}},l_2=(-1)^{\abs {a_2}\abs{b_2}}$ and obtain
\begin{align*}
        &((a_1\otimes a_2)\ast_\lambda (b_1\otimes b_2))\ast_{\lambda+\mu}(c_1\otimes c_2)-(a_1\otimes a_2)\ast_\lambda((b_1\otimes b_2)\ast_\mu (c_1\otimes c_2))\\
        &-(-1)^{\varepsilon_0}\big(((b_1\otimes b_2)\ast_\mu (a_1\otimes a_2))\ast_{\lambda+\mu}(c_1\otimes c_2)-(b_1\otimes b_2)\ast_\mu((a_1\otimes a_2)\ast_\lambda (c_1\otimes c_2))\big)\\
      =\ &\varepsilon_1\big((a_1\ast^{1}_\lambda b_1)\ast^{1}_{\lambda+\mu} c_1-a_1\ast^{1}_\lambda (b_1\ast^{1}_\mu c_1)-l_1(b_1\ast^{1}_\mu a_1)\ast^{1}_{\lambda+\mu} c_1+l_1b_1\ast^{1}_\mu (a_1\ast^{1}_\lambda c_1)\big)\\
      &\otimes ((a_2\circ^{2}_\lambda b_2)\circ^{2}_{\lambda+\mu} c_2)\ +\  
      \varepsilon_1((a_1\circ^{1}_\lambda b_1)\ast^{1}_{\lambda+\mu} c_1)\ \otimes \\
      &\big((a_2\ast^{2}_\lambda b_2)\circ^{2}_{\lambda+\mu} c_2-a_2\ast^{2}_\lambda (b_2\circ^{2}_\mu c_2)-l_2(b_2\ast^{2}_\mu a_2)\circ^{2}_{\lambda+\mu} c_2+l_2b_2\ast^{2}_\mu (a_2\circ^{2}_\lambda c_2)\big)\\
       +\ &\varepsilon_1\big((a_1\ast^{1}_\lambda b_1)\circ^{1}_{\lambda+\mu} c_1-a_1\ast^{1}_\lambda(b_1\circ^{1}_\mu c_1)-l_1(b_1\ast^{1}_\mu a_1)\circ^{1}_{\lambda+\mu} c_1+l_1b_1\ast^{1}_\mu (a_1\circ^{1}_\lambda c_1)\big)\\
       &\otimes ((a_2\circ^{2}_\lambda b_2)\ast^{2}_{\lambda+\mu} c_2)\ +\
         \varepsilon_1((a_1\circ^{1}_\lambda b_1)\circ^{1}_{\lambda+\mu} c_1)\ \otimes \\
         &\big((a_2\ast^{2}_\lambda b_2)\ast^{2}_{\lambda+\mu} c_2-a_2\ast^{2}_\lambda (b_2\ast^{2}_\mu c_2)-l_2(b_2\ast^{2}_\mu a_2)\ast^{2}_{\lambda+\mu} c_2+l_2b_2\ast^{2}_\mu (a_2\ast^{2}_\lambda c_2)\big)\\
\overset{\textbf{\eqref{eq:4-8}}}{=}  & \varepsilon_1((a_1\ast^{1}_\lambda b_1)\ast^{1}_{\lambda+\mu} c_1-a_1\ast^{1}_\lambda (b_1\ast^{1}_\mu c_1)-l_1(b_1\ast^{1}_\mu a_1)\ast^{1}_{\lambda+\mu} c_1+l_1b_1\ast^{1}_\mu (a_1\ast^{1}_\lambda c_1))\\
      &\otimes ((a_2\circ^{2}_\lambda b_2)\circ^{2}_{\lambda+\mu} c_2)\ +\  
         \varepsilon_1((a_1\circ^{1}_\lambda b_1)\circ^{1}_{\lambda+\mu} c_1)\ \otimes \\
         &((a_2\ast^{2}_\lambda b_2)\ast^{2}_{\lambda+\mu} c_2-a_2\ast^{2}_\lambda (b_2\ast^{2}_\mu c_2)-l_2(b_2\ast^{2}_\mu a_2)\ast^{2}_{\lambda+\mu} c_2+l_2b_2\ast^{2}_\mu (a_2\ast^{2}_\lambda c_2))\\
\overset{\textbf{\eqref{eq:2-18}}}{ =} &\ 0.
\end{align*}
Therefore, $(\mathcal L,\ast_\lambda)$ is a {left-symmetric conformal superalgebra}. Moreover, we have
\begin{align*}
        &((a_1\otimes a_2)\ast_\lambda (b_1\otimes b_2))\circ_{\lambda+\mu}(c_1\otimes c_2)-(a_1\otimes a_2)\ast_\lambda((b_1\otimes b_2)\circ_\mu (c_1\otimes c_2))\\
        &-(-1)^{\varepsilon_0}\big(((b_1\otimes b_2)\ast_\mu (a_1\otimes a_2))\circ_{\lambda+\mu}(c_1\otimes c_2)-(b_1\otimes b_2)\ast_\mu((a_1\otimes a_2)\circ_\lambda (c_1\otimes c_2))\big)\\
      =\ &\varepsilon_1((a_1\ast^{1}_\lambda b_1)\circ^{1}_{\lambda+\mu} c_1)\otimes ((a_2\circ^{2}_\lambda b_2)\circ^{2}_{\lambda+\mu} c_2)
         -\varepsilon_1(a_1\ast^{1}_\lambda (b_1\circ^{1}_\mu c_1))\otimes (a_2\circ^{2}_\lambda (b_2\circ^{2}_\mu c_2))\\
         -\ &\varepsilon_2((b_1\ast^{1}_\mu a_1)\circ^{1}_{\lambda+\mu} c_1)\otimes ((b_2\circ^{2}_\mu a_2)\circ^{2}_{\lambda+\mu} c_2)
         +\varepsilon_2(b_1\ast^{1}_\mu (a_1\circ^{1}_\lambda c_1))\otimes (b_2\circ^{2}_\mu (a_2\circ^{2}_\lambda c_2))\\
        +\ &\varepsilon_1((a_1\circ^{1}_\lambda b_1)\circ^{1}_{\lambda+\mu} c_1)\otimes ((a_2\ast^{2}_\lambda b_2)\circ^{2}_{\lambda+\mu} c_2)
        -\varepsilon_1(a_1\circ^{1}_\lambda (b_1\circ^{1}_\mu c_1))\otimes (a_2\ast^{2}_\lambda (b_2\circ^{2}_\mu c_2))\\
        -\ &\varepsilon_2((b_1\circ^{1}_\mu a_1)\circ^{1}_{\lambda+\mu} c_1)\otimes ((b_2\ast^{2}_\mu a_2)\circ^{2}_{\lambda+\mu} c_2)
        +\varepsilon_2(b_1\circ^{1}_\mu (a_1\circ^{1}_\lambda c_1))\otimes (b_2\ast^{2}_\mu (a_2\circ^{2}_\lambda c_2))\\
        =\ &\varepsilon_1\big((a_1\ast^{1}_\lambda b_1)\circ^{1}_{\lambda+\mu} c_1-a_1\ast^{1}_\lambda (b_1\circ^{1}_\mu c_1)-l_1(b_1\ast^{1}_\mu a_1)\circ^{1}_{\lambda+\mu} c_1+l_1b_1\ast^{1}_\mu (a_1\circ^{1}_\lambda c_1)\big)\\
        &\otimes ((a_2\circ^{2}_\lambda b_2)\circ^{2}_{\lambda+\mu} c_2)\ +\ 
        \varepsilon_1((a_1\circ^{1}_\lambda b_1)\circ^{1}_{\lambda+\mu} c_1)\ \otimes \\
        &\big((a_2\ast^{2}_\lambda b_2)\circ^{2}_{\lambda+\mu} c_2-a_2\ast^{2}_\lambda (b_2\circ^{2}_\mu c_2)-l_2(b_2\ast^{2}_\mu a_2)\circ^{2}_{\lambda+\mu} c_2+l_2b_2\ast^{2}_\mu (a_2\circ^{2}_\lambda c_2)\big)\\
\overset{\textbf{\eqref{eq:4-8}}}{=} & \ 0.
\end{align*}
It remains to check the compatibility condition {\eqref{eq:4-7}}. By a direct computation, we get
   \begin{align*}
        &((a_1\otimes a_2)\circ_\lambda (b_1\otimes b_2))\ast_{\lambda+\mu}(c_1\otimes c_2)
         =(-1)^{\abs {a_2}\abs {b_1}}((a_1\circ^{1}_\lambda b_1)\otimes (a_2\circ^{2}_\lambda b_2))\ast_{\lambda+\mu}(c_1\otimes c_2)\\
         =&\varepsilon_1\big(((a_1\circ^{1}_\lambda b_1)\ast^{1}_{\lambda+\mu} c_1)\otimes ((a_2\circ^{2}_\lambda b_2)\circ^{2}_{\lambda+\mu} c_2)+((a_1\circ^{1}_\lambda b_1)\circ^{1}_{\lambda+\mu} c_1)\otimes ((a_2\circ^{2}_\lambda b_2)\ast^{2}_{\lambda+\mu} c_2)\big)\\
        =&\varepsilon_1\big((a_1\circ^{1}_\lambda( b_1\ast^{1}_{\mu} c_1))\otimes (a_2\circ^{2}_\lambda (b_2\circ^{2}_{\mu} c_2))+(a_1\circ^{1}_\lambda (b_1\circ^{1}_{\mu} c_1))\otimes (a_2\circ^{2}_\lambda (b_2\ast^{2}_{\mu} c_2))\big)\\
       =&(-1)^{\abs {b_2}\abs {c_1}}\big((a_1\otimes a_2)\circ_\lambda(( b_1\ast^{1}_{\mu} c_1)\otimes  (b_2\circ^{2}_{\mu} c_2))
       +(a_1\otimes a_2)\circ_\lambda(( b_1\circ^{1}_{\mu} c_1)\otimes  (b_2\ast^{2}_{\mu} c_2))\big)\\
         =&(-1)^{\abs {b_2}\abs {c_1}}(a_1\otimes a_2)\circ_\lambda(( b_1\ast^{1}_{\mu} c_1)\otimes  (b_2\circ^{2}_{\mu} c_2)
       + (b_1\circ^{1}_{\mu} c_1)\otimes  (b_2\ast^{2}_{\mu} c_2))\\
       =&(a_1\otimes a_2)\circ_\lambda((b_1\otimes b_2)\ast_{\mu}(c_1\otimes c_2)).
      \end{align*}
      
Altogether, $(\mathcal L,\circ_\lambda,\ast_\lambda)$ is a pre-Lie Poisson conformal superalgebra.
\end{proof}

\section{Compatible TPCSA structures on Lie conformal superalgebras of rank (1+1)}\label{sec5}

In this section, we classify all compatible  TPCSA structures on Lie conformal superalgebras of rank (1+1).
Let $R=R_{\bar 0}\oplus R_{\bar 1}$ be a free Lie conformal superalgebra of rank (1+1), where $ R_{\bar 0}=\mathbb{C}[\partial]x, R_{\bar 1}=\mathbb{C}[\partial]y$.
We shall use the classification of Lie conformal superalgebras of rank (1+1) given in \cite[Proposition 2.4]{ZCYjmh17}. Up to isomorphism, such a Lie conformal superalgebra is one of the following five types:
\begin{align*}
&R_1: ~ [x_\lambda x]=0,\quad [x_\lambda y]=0,\quad [y_\lambda y]=p(\pa)x,\quad \forall~  p(\pa)\in\C[\pa];\\
&R_2: ~ [x_\lambda x]=0,\quad [y_\lambda y]=0,\quad [x_\lambda y]=q(\lambda)y,\quad \forall~q(\lambda)\in\C[\lambda];\\
&R_3:~ [x_\lambda x]=(\pa+2\lambda)x,\quad [x_\lambda y]=0,\quad [y_\lambda y]=0;\\
&R_4: ~ [x_\lambda x]=(\pa+2\lambda)x,\quad [x_\lambda y]=(\pa+\beta\lambda+\gamma)y,\quad [y_\lambda y]=0,\quad \forall~\beta,\gamma\in\C;\\
&R_5: ~ [x_\lambda x]=(\pa+2\lambda)x,\quad
[x_\lambda y]=(\pa+\tfrac32\lambda)y,\quad
[y_\lambda y]=\alpha x, \quad \forall~\alpha\in\C.
\end{align*}

We begin  with the following definition.
\begin{definition}
Let $(A,[{\cdot_ \lambda \cdot}])$ be a {Lie conformal superalgebra}. A compatible TPCSA structure over $A$ is an even $\lambda$-product $\circ_{\lambda}$ on $A$, such that $(A,\circ_{\lambda},[{\cdot_ \lambda \cdot}])$ forms a TPCSA.
\end{definition}

Obviously, the $\lambda$-product $a\circ_\lambda b=0$ defines a compatible TPCSA structure over the Lie conformal superalgebra $A$. This is referred to as the {\bf trivial} compatible TPCSA structure over $A$. Consequently, our primary task is to determine the existence of nontrivial compatible TPCSA structures on
$A$, and to classify such structures if they exist. 

Let $\circ_\lambda$ be an even $\lambda$-product on $R$. By the definition of a TPCSA, we can assume that 
\begin{equation}\label{eq:general-product}
 x\circ_\lambda x=f(\pa,\lambda)x,
 \quad
 x\circ_\lambda y=g(\pa,\lambda)y,
 \quad
 y\circ_\lambda y=h(\pa,\lambda)x,
\end{equation}
where $f(\pa,\lambda),g(\pa,\lambda),h(\pa,\lambda)\in\C[\pa,\lambda]$. By commutativity, we get
\begin{equation}\label{eq:supercommutativity}
 f(\pa,\lambda)=f(\pa,-\pa-\lambda),
 \quad
 y\circ_\lambda x=g(\pa,-\pa-\lambda)y,
 \quad
 h(\pa,\lambda)=-h(\pa,-\pa-\lambda).
\end{equation}
By associativity, we have $x\circ_\lambda(x\circ_\mu x)=(x\circ_\lambda x)\circ_{\lambda+\mu}x$, this is equivalent to
    \begin{align}\label{ly3}
        f(\partial+\lambda, \mu) f(\partial, \lambda)=f(-\lambda-\mu, \lambda) f(\partial, \lambda+\mu),
    \end{align}
which implies $deg_{\partial} f(\partial, \lambda)+deg_{\partial} f(\partial, \lambda)=deg_{\partial} f(\partial, \lambda)$,
where we use the notation $deg_{\partial} f(\partial, \lambda)$ to denote the highest degree of $\partial$ in $f(\partial, \lambda)$. Hence, $deg_{\partial} f(\partial, \lambda)=0$.
Now we can suppose $f(\partial, \lambda)=f(\lambda)$ for some $f(\lambda)\in \mathbb{C}[\lambda]$. Then Eq.~\eqref{ly3} reduces to 
    \begin{align}
        f(\mu) f(\lambda)=f(\lambda) f(\lambda+\mu),
    \end{align}
which implies $f(\lambda)=c$ for some $c\in \mathbb{C}$. 

Moreover, we have $x\circ_\lambda(x\circ_\mu y)=(x\circ_\lambda x)\circ_{\lambda+\mu}y$, hence
    \begin{align}
        g(\partial+\lambda, \mu) g(\partial, \lambda)=c g(\partial, \lambda+\mu).
    \end{align}

(1) If $c=0$, then $g(\partial, \lambda)=0$. 
(2) If $c\neq0$, considering the highest degree of $\partial$ on both sides,  
we get $deg_{\partial} g(\partial, \lambda)=0$, leading to $g(\partial, \lambda)=g(\lambda)$ for some $g(\lambda)\in\mathbb{C}[\lambda]$.
It follows that $g(\mu) g(\lambda)=c g(\lambda+\mu)$, which implies $g(\lambda)=0$ or $g(\lambda)=c$.

The above elementary computations suggest that the possible forms of
$f(\partial,\lambda)$ and $g(\partial,\lambda)$ are restricted to the following two cases:
    \begin{align}
      &(i)~ f(\partial, \lambda)=c_1,\quad g(\partial, \lambda)=0,\quad c_1\in\C\backslash\{0\}; \label{type1}\\
      &(ii)~ f(\partial, \lambda)=c_2,\quad g(\partial, \lambda)=c_2,\quad c_2\in\C. \label{type2}
    \end{align}

Next we proceed to analyze the five types $R_1,\dots,R_5$ separately.

{\bf C{ase} 1:} {Type $R_1$}.
Assume first that $p(\pa)\neq0$, taking $(a,b,c)=(y,y,y)$ in Eq.~\eqref{rule3}, we obtain
    \begin{align*}
        2y\circ_\lambda [y_\mu y]= [(y\circ_\lambda y)_{\lambda+\mu}y]-[y_\mu(y\circ_\lambda y)].
    \end{align*}
Since $y\circ_\lambda y\in \mathbb C[\partial]x$ and all mixed
brackets between $x$ and $y$ vanish, the right-hand side is zero. Hence
$$
 2p(\pa+\lambda)g(\pa,-\pa-\lambda)y=0.
$$
As $\C[\pa,\lambda]$ is an integral domain and $p(\pa+\lambda)\neq0$, we get $g(\pa,-\pa-\lambda)=0$, i.e., $g(\pa,\lambda)=0$.

Next, taking $(a,b,c)=(x,y,y)$ in Eq.~\eqref{rule3} gives
\[
 2x\circ_\lambda [y_\mu y]
 =
 [(x\circ_\lambda y)_{\lambda+\mu}y]
 +[y_\mu(x\circ_\lambda y)].
\]
Since $g=0$, the right-hand side vanishes, and therefore $2p(\pa+\lambda)f(\pa,\lambda)x=0$.
It follows that  $ f(\pa,\lambda)=0$.

Consequently, the only possible nonzero $\lambda$-product is $ y\circ_\lambda y=h(\pa,\lambda)x$.
By commutativity, $h$ must satisfy $h(\pa,\lambda)=-h(\pa,-\pa-\lambda)$.
Equivalently, writing $T=\partial+2\lambda$, the polynomial $h$ is odd with respect to $T$.
Hence there exists a polynomial $\Phi(s,t)\in\mathbb C[s,t]$ such that
\[
h(\partial,\lambda)
=
(\partial+2\lambda)\Phi\bigl(\partial,(\partial+2\lambda)^2\bigr).
\]
Conversely, one can verify that any such polynomial $h$ defines an even commutative associative $\lambda$-product on $R_1$.

If $p(\pa)=0$, then the Lie conformal superalgebra $R_1$ is abelian, and every even commutative associative $\lambda$-product gives a TPCSA structure.


{\bf C{ase} 2:} {Type $R_2$}. If $q(\lambda)=0$, then the Lie conformal superalgebra $R_2$ is abelian, and the argument is the same as in the case $p(\partial)=0$ above.

Assume that $q(\lambda)\neq0$, taking $(a,b,c)=(y,x,y)$ in Eq.~\eqref{rule3}, we obtain
\[
 2y\circ_\lambda [x_\mu y]
 =
 [(y\circ_\lambda x)_{\lambda+\mu}y]
 +[x_\mu(y\circ_\lambda y)].
\]
Since $[y_\lambda y]=0$ and $[x_\lambda x]=0$, we get $2q(\mu)h(\pa,\lambda)x=0$.
Hence $ h(\pa,\lambda)=0$.

Taking $(a,b,c)=(x,x,y)$ in Eq.~\eqref{rule3}, we get
\[
 2x\circ_\lambda [x_\mu y]
 =
 [(x\circ_\lambda x)_{\lambda+\mu}y]
 +[x_\mu(x\circ_\lambda y)].
\]
This is equivalent to
\begin{equation*}
 q(\mu)\bigl(2g(\pa,\lambda)-g(\pa+\mu,\lambda)\bigr)
 =f(-\lambda-\mu,\lambda)q(\lambda+\mu).
\end{equation*}
Since $q(\lambda)\neq0$, the pair $(f,g)$ cannot be of the form \eqref{type1}. Indeed,
if $g(\partial,\lambda)=0$ and $f(\partial,\lambda)=c_1\neq0$, then the above identity gives
$0=c_1q(\lambda+\mu)$, a contradiction. Thus $(f,g)$ must be of the form \eqref{type2}.
Therefore, $c_2(q(\mu)-q(\lambda+\mu))=0$.

(1) If $q(\lambda)$ is a nonzero constant, then $c_2\in \C$.
(2) If $q(\lambda)$ is nonconstant, then $c_2=0$.

{\bf C{ase} 3:} {Type $R_3$}.
Taking $(a,b,c)=(y,x,x)$ in Eq.~\eqref{rule3}, we get
\[
 2y\circ_\lambda [x_\mu x]
 =
 [(y\circ_\lambda x)_{\lambda+\mu}x]
 +[x_\mu(y\circ_\lambda x)].
\]
Since all brackets involving both $x$ and $y$ are zero, the right-hand side is zero. Hence
\[
 2(\pa+\lambda+2\mu)g(\pa,-\pa-\lambda)y=0,
\]
and therefore $g(\pa,-\pa-\lambda)=0$, i.e., $g(\pa,\lambda)=0$.

Similarly, taking $(a,b,c)=(y,x,y)$ in Eq.~\eqref{rule3} gives $ [x_\mu(y\circ_\lambda y)]=0$, that is
\[
 (\pa+2\mu)h(\pa+\mu,\lambda)x=0.
\]
Hence $h(\pa+\mu,\lambda)=0$, i.e., $h(\pa,\lambda)=0$.

As previously mentioned, $f(\pa,\lambda)=c$ for some $c\in \mathbb{C}$. 
Thus the compatible TPCSA structures on $R_3$ are precisely
\begin{equation*}
 x\circ_\lambda x=cx,
 \quad
 x\circ_\lambda y=0,
 \quad
 y\circ_\lambda y=0,
 \quad
 c\in \C.
\end{equation*}

{\bf C{ase} 4:} {Type $R_4$}.
Taking $(a,b,c)=(x,x,y)$ in Eq.~\eqref{rule3}, we obtain
\[
 2x\circ_\lambda [x_\mu y]
 =
 [(x\circ_\lambda x)_{\lambda+\mu}y]
 +[x_\mu(x\circ_\lambda y)].
\]
Since $f(\pa,\lambda)=c$ for some $c\in \mathbb{C}$, we have
\begin{equation}\label{eq:R4}
 2(\pa+\lambda+\beta\mu+\gamma)g(\pa,\lambda)
 =
 c(\pa+\beta\lambda+\beta\mu+\gamma)
 +(\pa+\beta\mu+\gamma)g(\pa+\mu,\lambda).
\end{equation}
Setting $\mu=0$ in Eq.~\eqref{eq:R4}, we get
\begin{equation}\label{eq:R4-zero}
 (\pa+2\lambda+\gamma)g(\pa,\lambda)=c(\pa+\beta\lambda+\gamma).
\end{equation}
If $\beta=2$, then $(\partial+2\lambda+\gamma)(g(\pa,\lambda)-c)=0$, hence $g(\pa,\lambda)=c$.
If $\beta\neq2$, by comparing the similar terms in Eq.~\eqref{eq:R4-zero}, one can obtain $g(\partial, \lambda)=c=0$.

It remains to determine polynomial $h$. Taking $(a,b,c)=(y,x,y)$ in Eq.~\eqref{rule3} gives
\[
 2y\circ_\lambda [x_\mu y]
 =
 [(y\circ_\lambda x)_{\lambda+\mu}y]
 +[x_\mu(y\circ_\lambda y)].
\]
Since $[y_\lambda y]=0$, we obtain
\begin{equation}\label{eq:R4-H-equation}
 2(\pa+\lambda+\beta\mu+\gamma)h(\pa,\lambda)
 =
 (\pa+2\mu)h(\pa+\mu,\lambda).
\end{equation}
Setting $\mu=0$ in \eqref{eq:R4-H-equation}, we get $(\pa+2\lambda+2\gamma)h(\pa,\lambda)=0$, leading to $h(\pa,\lambda)=0$.

Consequently, if $\beta=2$, then
\[
 x\circ_\lambda x=cx,
 \quad
 x\circ_\lambda y=cy,
 \quad
 y\circ_\lambda y=0,
 \quad
 c\in \C,
\]
whereas if $\beta\neq2$, then the $\lambda$-product is trivial.

{\bf C{ase} 5:} {Type $R_5$}.
Taking $(a,b,c)=(x,x,y)$ in Eq.~\eqref{rule3}, we obtain
\[
 2x\circ_\lambda [x_\mu y]
 =
 [(x\circ_\lambda x)_{\lambda+\mu}y]
 +[x_\mu(x\circ_\lambda y)].
\]
Since $f(\pa,\lambda)=c$ for some $c\in \mathbb{C}$, we have
\begin{equation}\label{eq:R4s}
 2(\pa+\lambda+\tfrac32\mu)g(\pa,\lambda)
 =
 c(\pa+\tfrac32\lambda+\tfrac32\mu)
 +(\pa+\tfrac32\mu)g(\pa+\mu,\lambda).
\end{equation}
Setting $\mu=0$ in Eq.~\eqref{eq:R4s}, we get
$$ (\pa+2\lambda)g(\pa,\lambda)=c(\pa+\tfrac32\lambda).$$
Since $\pa+\tfrac32\lambda$ is not divisible by $\pa+2\lambda$, we must have $c=0$.
Thus $f(\pa,\lambda)=g(\pa,\lambda)=0$.

Moreover, taking $(a,b,c)=(y,x,y)$ in Eq.~\eqref{rule3} gives
\[
 2y\circ_\lambda [x_\mu y]
 =
 [(y\circ_\lambda x)_{\lambda+\mu}y]
 +[x_\mu(y\circ_\lambda y)].
\]
Since $y\circ_\lambda x=0$, we obtain
\begin{equation}\label{eq:R4syy}
 2(\pa+\lambda+\tfrac32\mu)h(\pa,\lambda)
 =
 (\pa+2\mu)h(\pa+\mu,\lambda).
\end{equation}
Setting $\mu=0$ in Eq.~\eqref{eq:R4syy}, we get $(\pa+2\lambda)h(\pa,\lambda)=0$, hence $h(\pa,\lambda)=0$.

Therefore, the Lie conformal superalgebra $R_5$ admits only the trivial TPCSA structure.

The preceding case-by-case computations give all necessary forms of the
$\lambda$-product, we summarize the discussions above in the following theorem:

\begin{theorem}\label{thm:rank-one-one-classification}
Let $R$ be one of the Lie conformal superalgebras $R_1,\ldots,R_5$ listed
above, and let $\circ_\lambda$ be an even $\lambda$-product on $R$. Then
$\circ_\lambda$ defines a compatible TPCSA
structure on $R$ if and only if it is one of the following: 
\begin{enumerate}[\upshape(1)]
\item For $R_1$: If $p(\pa)\neq0$, then
\[
 x\circ_\lambda x=0,
 \quad
 x\circ_\lambda y=0,
\]
and
\[
y\circ_\lambda y
=
(\partial+2\lambda)\Phi\bigl(\partial,(\partial+2\lambda)^2\bigr)x,
\quad
\Phi(s,t)\in\mathbb C[s,t].
\]
(Each polynomial $\Phi(s,t)\in\mathbb C[s,t]$ defines such a structure.)

If $p(\pa)=0$, then $R_1$ is abelian, and every even commutative
associative $\lambda$-product gives such a structure.

\item For $R_2$: If $q(\lambda)=0$, then $R_2$ is abelian, and every even
commutative associative $\lambda$-product gives such a structure. 

If $q(\lambda)$ is a nonzero constant, then
\[
 x\circ_\lambda x=cx,
 \quad
 x\circ_\lambda y=cy,
 \quad
 y\circ_\lambda y=0,
 \quad
  c\in\C.
\]

If $q(\lambda)$ is nonconstant, then only the trivial structure exists:
\[
 x\circ_\lambda x=x\circ_\lambda y=y\circ_\lambda y=0.
\]

\item For $R_3$: Every structure is given by
\[
 x\circ_\lambda x=cx,
 \quad
 x\circ_\lambda y=0,
 \quad
 y\circ_\lambda y=0,
  \quad
  c\in\C.
\]

\item For $R_4$: If $\beta=2$, then
\[
 x\circ_\lambda x=cx,
 \quad
 x\circ_\lambda y=cy,
 \quad
 y\circ_\lambda y=0,
  \quad
  c\in\C.
\]

If $\beta\neq2$, then only the trivial structure exists:
\[
 x\circ_\lambda x=x\circ_\lambda y=y\circ_\lambda y=0.
\]

\item For $R_5$: Only the trivial structure exists:
\[
 x\circ_\lambda x=x\circ_\lambda y=y\circ_\lambda y=0.
\]
\end{enumerate}
\end{theorem}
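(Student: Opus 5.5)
The plan is to split the proof into necessity and sufficiency. For necessity, I would collect the case-by-case computations carried out above, after checking that they really use every relevant instance of \eqref{rule3}. For sufficiency, I would verify directly that each listed product is commutative, associative and satisfies \eqref{rule3} on all triples of generators. Both \eqref{rule3} and the associativity law are $\mathbb C$-trilinear and sesquilinear in each slot. Also, every $\lambda$-product on the free module $R=\mathbb C[\partial]x\oplus\mathbb C[\partial]y$ is determined by its values on generators. Hence it suffices to test the eight triples $(a,b,c)\in\{x,y\}^3$ for each structure.

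For necessity I would proceed in order. First, the general ansatz \eqref{eq:general-product} together with supercommutativity \eqref{eq:supercommutativity} and the associativity computations restricts $(f,g)$ to \eqref{type1} or \eqref{type2}. This uses only the triples $(x,x,x)$ and $(x,x,y)$ of associativity. I would then run through the types as in Cases 1--5, each time choosing the instance of \eqref{rule3} whose right-hand side collapses.
\begin{itemize}
\item[] For $R_1$ with $p\neq0$, use $(y,y,y)$ and then $(x,y,y)$.
\item[] For $R_2$ with $q\neq0$, use $(y,x,y)$ and $(x,x,y)$. Comparing coefficients in $c_2(q(\mu)-q(\lambda+\mu))=0$ splits the case according to whether $q$ is constant.
\item[] For $R_3$, use $(y,x,x)$ and $(y,x,y)$.
\item[] For $R_4$ and $R_5$, set $\mu=0$ in the $(x,x,y)$ and $(y,x,y)$ identities and argue by divisibility in the integral domain $\C[\pa,\lambda]$.
\end{itemize}
One point needs care. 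In $R_4$ with $\beta\ne2$, comparing the $\partial$-degree and the $\lambda$-coefficient in \eqref{eq:R4-zero} forces $g$ to be a constant $c'$ with $c'=c$ and $2c'=\beta c$, so $c=0$. In $R_5$, setting $\mu=0$ gives $(\pa+2\lambda)h=0$ without using $\gamma$.

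For sufficiency, the abelian cases ($p=0$, $q=0$) are immediate, since \eqref{rule3} reads $0=0$. For $R_3$, $x\circ_\lambda x=cx$ with $c$ constant is associative. In \eqref{rule3}, only $(x,x,x)$ is nontrivial: $2c(\pa+\lambda+2\mu)x=c(\pa+2\lambda+2\mu)x+c(\pa+2\mu+\lambda)x$. Here I use $[x_{\lambda+\mu}x]$ evaluated after the product, and sesquilinearity gives the shift $\pa\mapsto\pa+\lambda$ on the left. The triples involving $y$ vanish on both sides.

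For $R_2$ with constant $q$ and for $R_4$ with $\beta=2$, the product $a\circ_\lambda b=c\,ab$ is just $c$ times the identity-type product ($x$ acting as unit, $y\circ y=0$). Then \eqref{rule3} reduces to the statement that each bracket coefficient $P(\pa,\lambda)$ satisfies $2P(\pa+\lambda,\mu)=P(\pa,\lambda+\mu)+P(\pa+\lambda,\mu)$ up to the appropriate shifts. This holds because the bracket polynomials $\pa+2\lambda$ and $\pa+2\lambda+\gamma$ are exactly of the form $\pa+2(\text{first variable})+\text{const}$. I would check this for $[x_\mu y]$ and $[x_\mu x]$ explicitly, and note that $[y_\mu x]$ follows by skew-symmetry.

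For $R_1$ with $p\neq0$, the product lands in $\C[\pa]x$, and $x$ is central. Any product of the form $y\circ y\in\C[\pa]x$, $x\circ\cdot=0$ is associative, since every double product vanishes. Every term of \eqref{rule3} then vanishes, because a bracket involving an element of $\C[\pa]x$ is $0$, and $a\circ_\lambda[b_\mu c]$ with $[b_\mu c]\in\C[\pa]x$ is $0$. The only remaining condition is supercommutativity, $h(\pa,\lambda)=-h(\pa,-\pa-\lambda)$. In the variables $s=\pa$ and $T=\pa+2\lambda$, the involution $\lambda\mapsto-\pa-\lambda$ sends $T\mapsto-T$ and fixes $s$. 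So the condition says $h$ is odd in $T$, which is equivalent to the stated form $T\,\Phi(s,T^2)$.

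I expect the main obstacle to be the $R_2$ case. There one must make sure that $h=0$ and the restriction to \eqref{type2} genuinely use all the needed triples. One must also check that no further constraint appears from, e.g., $(y,y,x)$ or $(x,y,y)$. Since $[y_\lambda y]=0$ and $y\circ y=0$, I expect those to reduce to identities already shown.
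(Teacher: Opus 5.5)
Your overall plan is the paper's own proof. Necessity is read off from Cases 1--5, using the same triples, and sufficiency is direct substitution into \eqref{rule3} on the eight generator triples; your sesquilinearity/freeness remark correctly justifies that reduction. Your treatment of $R_1$ (all double products vanish, and $h$ is odd in $T=\partial+2\lambda$) and of the abelian cases is fine.

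The explicit checks you wrote for the nontrivial structures in $R_2$, $R_3$ and $R_4$, however, contain a recurring error in the last term $(-1)^{|a||b|}[b_\mu(a\circ_\lambda c)]$. Because $x\circ_\lambda x=cx$ and $x\circ_\lambda y=cy$ have $\partial$-free coefficients, this term gets no $\lambda$-shift: in $R_3$ it equals $[x_\mu(cx)]=c(\partial+2\mu)x$, not $c(\partial+2\mu+\lambda)x$.

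As you displayed it, the $(x,x,x)$ check reads $2c(\partial+\lambda+2\mu)=c(2\partial+3\lambda+4\mu)$, which is false for $c\neq0$. Likewise, your reduced identity $2P(\partial+\lambda,\mu)=P(\partial,\lambda+\mu)+P(\partial+\lambda,\mu)$ is equivalent to $P(\partial+\lambda,\mu)=P(\partial,\lambda+\mu)$, which fails for $P=\partial+2\lambda$. Carried out literally, your verification would therefore reject exactly the structures it is meant to confirm.

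The correct condition for triples with first entry $x$ is
\[
2P(\partial+\lambda,\mu)=P(\partial,\lambda+\mu)+P(\partial,\mu).
\]
Its polynomial solutions are precisely $P=a(\partial+2\lambda)+e$ with $a,e\in\C$. This covers $[x_\lambda x]$, $[x_\lambda y]$ and $[y_\lambda x]=(\partial+2\lambda-\gamma)y$ in $R_4$ with $\beta=2$, and the constants $q_0$, $-q_0$ in $R_2$. It also matches the paper's own equations \eqref{eq:R4} and the $R_2$ relation $c_2(q(\mu)-q(\lambda+\mu))=0$.

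Triples with first entry $y$ are not of this single-bracket form. The triple $(y,x,x)$ mixes three brackets. It holds because $2(\partial+\lambda+2\mu)=(\partial+2\lambda+2\mu-\gamma)+(\partial+2\mu+\gamma)$ in $R_4$, and $0=-q_0+q_0$ in $R_2$. The triples $(y,x,y)$, $(y,y,x)$ and $(y,y,y)$ hold because $[y_\lambda y]=0$ and $y\circ_\lambda y=0$.

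With these corrections your sufficiency argument is complete and agrees with the paper's.
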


\begin{proof}

The necessity is precisely the content of the five cases considered above.
Conversely, for each $\lambda$-product listed in the theorem, evenness and
commutativity are immediate from the construction, and associativity
follows directly from the displayed formulas. A direct substitution of these
$\lambda$-products into Eq.~\eqref{rule3} verifies the transposed super-Leibniz rule
in each case. Hence the listed necessary forms are also sufficient.

\end{proof}




\bibliography{tpcsa}

@article {DKjjm13,
    AUTHOR = {De Sole, Alberto and Kac, Victor G.},
     TITLE = {The variational {P}oisson cohomology},
   JOURNAL = {Jpn. J. Math.},
  FJOURNAL = {Japanese Journal of Mathematics},
    VOLUME = {8},
      YEAR = {2013},
    NUMBER = {1},
     PAGES = {1--145},
      ISSN = {0289-2316,1861-3624},
   MRCLASS = {17B80 (17B56 17B69 37K10 37K30)},
  MRNUMBER = {3067293},
MRREVIEWER = {Vladimir\ Dotsenko},
       DOI = {10.1007/s11537-013-1124-3},
       URL = {https://doi.org/10.1007/s11537-013-1124-3},
}

@incollection {FKRb04,
    AUTHOR = {Fattori, Davide and Kac, Victor G. and Retakh, Alexander},
     TITLE = {Structure theory of finite {L}ie conformal superalgebras},
 BOOKTITLE = {Lie theory and its applications in physics {V}},
     PAGES = {27--63},
 PUBLISHER = {World Sci. Publ., River Edge, NJ},
      YEAR = {2004},
      ISBN = {981-238-936-9},
   MRCLASS = {17B68 (81R10 81T40)},
  MRNUMBER = {2172172},
MRREVIEWER = {Stanislav\ Z.\ Pakuliak},
       DOI = {10.1142/9789812702562\_0002},
}

@article {KKPjne21,
    AUTHOR = {Kolesnikov, Pavel S. and Kozlov, Roman A. and Panasenko,
              Aleksander S.},
     TITLE = {Quadratic {L}ie conformal superalgebras related to {N}ovikov
              superalgebras},
   JOURNAL = {J. Noncommut. Geom.},
  FJOURNAL = {Journal of Noncommutative Geometry},
    VOLUME = {15},
      YEAR = {2021},
    NUMBER = {4},
     PAGES = {1485--1500},
      ISSN = {1661-6952,1661-6960},
   MRCLASS = {17B69 (17B63 37K30)},
  MRNUMBER = {4357073},
MRREVIEWER = {Youjun\ Tan},
       DOI = {10.4171/jncg/445},
}

@book {Kac98,
    AUTHOR = {Kac, Victor G.},
     TITLE = {Vertex algebras for beginners},
    SERIES = {University Lecture Series},
    VOLUME = {10},
   EDITION = {Second},
 PUBLISHER = {American Mathematical Society, Providence, RI},
      YEAR = {1998},
     PAGES = {vi+201},
      ISBN = {0-8218-1396-X},
   MRCLASS = {17B69},
  MRNUMBER = {1651389},
       DOI = {10.1090/ulect/010},
       URL = {https://doi.org/10.1090/ulect/010},
}

@article {CSKajm97,
    AUTHOR = {Cheng, Shun-Jen and Kac, Victor G.},
     TITLE = {Conformal modules},
   JOURNAL = {Asian J. Math.},
  FJOURNAL = {Asian Journal of Mathematics},
    VOLUME = {1},
      YEAR = {1997},
    NUMBER = {1},
     PAGES = {181--193},
      ISSN = {1093-6106,1945-0036},
   MRCLASS = {17B68 (17B66 81R10)},
  MRNUMBER = {1480993},
MRREVIEWER = {Christoph\ Schweigert},
       DOI = {10.4310/AJM.1997.v1.n1.a6},
       URL = {https://doi.org/10.4310/AJM.1997.v1.n1.a6},
}

@article {YCHase17,
    AUTHOR = {Yuan, Lamei and Chen, Sheng and He, Caixia},
     TITLE = {Hom-{G}el'fand-{D}orfman super-bialgebras and {H}om-{L}ie
              conformal superalgebras},
   JOURNAL = {Acta Math. Sin. (Engl. Ser.)},
  FJOURNAL = {Acta Mathematica Sinica (English Series)},
    VOLUME = {33},
      YEAR = {2017},
    NUMBER = {1},
     PAGES = {96--116},
      ISSN = {1439-8516,1439-7617},
   MRCLASS = {17B69 (17B60 17D25)},
  MRNUMBER = {3581609},
MRREVIEWER = {Chengming\ Bai},
       DOI = {10.1007/s10114-016-6074-2},
}

@article {HL15,
    AUTHOR = {Hong, Yanyong and Li, Fang},
     TITLE = {Left-symmetric conformal algebras and vertex algebras},
   JOURNAL = {J. Pure Appl. Algebra},
  FJOURNAL = {Journal of Pure and Applied Algebra},
    VOLUME = {219},
      YEAR = {2015},
    NUMBER = {8},
     PAGES = {3543--3567},
      ISSN = {0022-4049,1873-1376},
   MRCLASS = {17B69 (17D25)},
  MRNUMBER = {3320236},
MRREVIEWER = {Elisabeth\ Remm},
       DOI = {10.1016/j.jpaa.2014.12.012},
       URL = {https://doi.org/10.1016/j.jpaa.2014.12.012},
}

@article {Hongjlt16,
    AUTHOR = {Hong, Yanyong},
     TITLE = {A class of {L}ie conformal superalgebras in higher dimensions},
   JOURNAL = {J. Lie Theory},
  FJOURNAL = {Journal of Lie Theory},
    VOLUME = {26},
      YEAR = {2016},
    NUMBER = {4},
     PAGES = {1145--1162},
      ISSN = {0949-5932},
   MRCLASS = {17B69 (17B62 17D25)},
  MRNUMBER = {3503807},
}

@article {CThjms24,
    AUTHOR = {Chtioui, Taoufik},
     TITLE = {Hom-{G}el'fand-{D}orfman conformal superbialgebras},
   JOURNAL = {Hacet. J. Math. Stat.},
  FJOURNAL = {Hacettepe Journal of Mathematics and Statistics},
    VOLUME = {53},
      YEAR = {2024},
    NUMBER = {3},
     PAGES = {577--585},
      ISSN = {1303-5010,2651-477X},
   MRCLASS = {17B60 (17B61 17B63)},
  MRNUMBER = {4766521},
       DOI = {10.15672/hujms.1196147},
       URL = {https://doi.org/10.15672/hujms.1196147},
}

@article {FKLrac21,
    AUTHOR = {Ferreira, Bruno Leonardo Macedo and Kaygorodov, Ivan and
              Lopatkin, Viktor},
     TITLE = {{$\frac{1}{2}$}-derivations of {L}ie algebras and transposed
              {P}oisson algebras},
   JOURNAL = {Rev. R. Acad. Cienc. Exactas F\'is. Nat. Ser. A Mat. RACSAM},
  FJOURNAL = {Revista de la Real Academia de Ciencias Exactas, F\'isicas y
              Naturales. Serie A. Matematicas. RACSAM},
    VOLUME = {115},
      YEAR = {2021},
    NUMBER = {3},
     PAGES = {Paper No. 142, 19},
      ISSN = {1578-7303,1579-1505},
   MRCLASS = {17B40 (17A42 17B63)},
  MRNUMBER = {4272639},
MRREVIEWER = {Luiz\ Antonio\ Peresi},
       DOI = {10.1007/s13398-021-01088-2},
}

@unpublished{YF2026,
      title={On transposed {P}oisson conformal algebras}, 
      author={Lamei Yuan and Hao Fang},
      year={2026},
      archivePrefix={arXiv},
      NOTE = {Preprint, 21 pp.},
      url={https://arxiv.org/abs/2603.14735}, 
}

@article {ZCYjmh17,
    AUTHOR = {Zhao, Jun and Chen, Liangyun and Yuan, Lamei},
     TITLE = {Deformations and generalized derivations of {L}ie conformal
              superalgebras},
   JOURNAL = {J. Math. Phys.},
  FJOURNAL = {Journal of Mathematical Physics},
    VOLUME = {58},
      YEAR = {2017},
    NUMBER = {11},
     PAGES = {111702, 17},
      ISSN = {0022-2488,1089-7658},
   MRCLASS = {17B40 (16W25 17B56)},
  MRNUMBER = {3724693},
       DOI = {10.1063/1.5012886},
       URL = {https://doi.org/10.1063/1.5012886},
}

@article {Chen25,
    AUTHOR = {Chen, Fulin and Liao, Xiaoling and Tan, Shaobin and Wang,
              Qing},
     TITLE = {A new construction of {L}ie conformal algebras from formal
              distribution {L}ie algebras},
   JOURNAL = {J. Algebra},
  FJOURNAL = {Journal of Algebra},
    VOLUME = {680},
      YEAR = {2025},
     PAGES = {96--133},
      ISSN = {0021-8693,1090-266X},
   MRCLASS = {17B65 (17B69)},
  MRNUMBER = {4913650},
       DOI = {10.1016/j.jalgebra.2025.04.045},
}

@article {Li96,
    AUTHOR = {Li, Hai-Sheng},
     TITLE = {Local systems of vertex operators, vertex superalgebras and
              modules},
   JOURNAL = {J. Pure Appl. Algebra},
  FJOURNAL = {Journal of Pure and Applied Algebra},
    VOLUME = {109},
      YEAR = {1996},
    NUMBER = {2},
     PAGES = {143--195},
      ISSN = {0022-4049,1873-1376},
   MRCLASS = {17B69},
  MRNUMBER = {1387738},
MRREVIEWER = {Mirko\ Primc},
       DOI = {10.1016/0022-4049(95)00079-8},
       URL = {https://doi.org/10.1016/0022-4049(95)00079-8},
}

@article {Su13,
    AUTHOR = {Su, Yucai and Yuan, Lamei},
     TITLE = {Schr\"odinger-{V}irasoro {L}ie conformal algebra},
   JOURNAL = {J. Math. Phys.},
  FJOURNAL = {Journal of Mathematical Physics},
    VOLUME = {54},
      YEAR = {2013},
    NUMBER = {5},
     PAGES = {053503, 16},
      ISSN = {0022-2488,1089-7658},
   MRCLASS = {17B81 (17B68)},
  MRNUMBER = {3098941},
MRREVIEWER = {Yanyong\ Hong},
       DOI = {10.1063/1.4803029},
}

@incollection {Kac97Locality,
    AUTHOR = {Kac, Victor G.},
     TITLE = {The idea of locality},
 BOOKTITLE = {Physical Applications and Mathematical Aspects of Geometry, Groups and Algebras},
     PAGES = {16--32},
 PUBLISHER = {World Scientific},
   ADDRESS = {Singapore},
      YEAR = {1997},
}

@article {BKV99,
    AUTHOR = {Bakalov, Bojko and Kac, Victor G. and Voronov, Alexander A.},
     TITLE = {Cohomology of conformal algebras},
   JOURNAL = {Comm. Math. Phys.},
  FJOURNAL = {Communications in Mathematical Physics},
    VOLUME = {200},
      YEAR = {1999},
    NUMBER = {3},
     PAGES = {561--598},
      ISSN = {0010-3616,1432-0916},
   MRCLASS = {17B56 (17B66 17B68 81R10)},
  MRNUMBER = {1675121},
MRREVIEWER = {Mirko\ Primc},
       DOI = {10.1007/s002200050541},
       URL = {https://doi.org/10.1007/s002200050541},
}

@incollection {FKja02,
    AUTHOR = {Fattori, Davide and Kac, Victor G.},
     TITLE = {Classification of finite simple {L}ie conformal superalgebras},
      NOTE = {Special issue in celebration of Claudio Procesi's 60th
              birthday},
   JOURNAL = {J. Algebra},
  FJOURNAL = {Journal of Algebra},
    VOLUME = {258},
      YEAR = {2002},
    NUMBER = {1},
     PAGES = {23--59},
      ISSN = {0021-8693,1090-266X},
   MRCLASS = {17B20 (17B68 81R10)},
  MRNUMBER = {1958896},
MRREVIEWER = {Stanislav\ Z.\ Pakuliak},
       DOI = {10.1016/S0021-8693(02)00504-5},
       URL = {https://doi.org/10.1016/S0021-8693(02)00504-5},
}

@article {BKLRimp06,
    AUTHOR = {Boyallian, Carina and Kac, Victor G. and Liberati, Jose I. and
              Rudakov, Alexei},
     TITLE = {Representations of simple finite {L}ie conformal superalgebras
              of type {$W$} and {$S$}},
   JOURNAL = {J. Math. Phys.},
  FJOURNAL = {Journal of Mathematical Physics},
    VOLUME = {47},
      YEAR = {2006},
    NUMBER = {4},
     PAGES = {043513, 25},
      ISSN = {0022-2488,1089-7658},
   MRCLASS = {17B69 (17B65 81R10)},
  MRNUMBER = {2226350},
MRREVIEWER = {Pavel\ S.\ Kolesnikov},
       DOI = {10.1063/1.2191788},
       URL = {https://doi.org/10.1063/1.2191788},
}

@article {BKLjmp10,
    AUTHOR = {Boyallian, Carina and Kac, Victor G. and Liberati, Jos\'e{}
              I.},
     TITLE = {Irreducible modules over finite simple {L}ie conformal
              superalgebras of type {$K$}},
   JOURNAL = {J. Math. Phys.},
  FJOURNAL = {Journal of Mathematical Physics},
    VOLUME = {51},
      YEAR = {2010},
    NUMBER = {6},
     PAGES = {063507, 37},
      ISSN = {0022-2488,1089-7658},
   MRCLASS = {17B69 (17B10 81R10 81T40)},
  MRNUMBER = {2676484},
MRREVIEWER = {Markus\ Rosellen},
       DOI = {10.1063/1.3397419},
       URL = {https://doi.org/10.1063/1.3397419},
}

@article{Bai23,
title = {Transposed {P}oisson algebras, {N}ovikov-{P}oisson algebras and 3-{L}ie algebras},
journal = {Journal of Algebra},
volume = {632},
pages = {535-566},
year = {2023},
issn = {0021-8693},
doi = {https://doi.org/10.1016/j.jalgebra.2023.06.006},
author = {Chengming Bai and Ruipu Bai and Li Guo and Yong Wu}
}

@article {DAK98,
    AUTHOR = {D'Andrea, Alessandro and Kac, Victor G.},
     TITLE = {Structure theory of finite conformal algebras},
   JOURNAL = {Selecta Math. (N.S.)},
  FJOURNAL = {Selecta Mathematica. New Series},
    VOLUME = {4},
      YEAR = {1998},
    NUMBER = {3},
     PAGES = {377--418},
      ISSN = {1022-1824,1420-9020},
   MRCLASS = {17B68 (81R10 81T40)},
  MRNUMBER = {1654574},
MRREVIEWER = {Henrik\ Aratyn},
       DOI = {10.1007/s000290050036},
       URL = {https://doi.org/10.1007/s000290050036},
}
\bibliographystyle{plainurl}

\end{document}